\newtheorem{theo}{Theorem}
\newtheorem{coro}{Corollary}
\newtheorem{prop}{Proposition}
\newtheorem{lemm}{Lemma}
\newtheorem{defn}{Definition}
\newtheorem{rem}{Remark.}
\def\N{\mathbb{N}}
\def\Z{\mathbb{Z}}
\def\R{\mathbb{R}}
\def\P{\mathbb{P}}
\def\PP{\mathcal{P}}
\def\L{\Lambda}
\def\O{\Omega}
\def\o{\omega}
\def\B{\mathcal{B}}
\def\G{\mathcal{G}}
\def\A{\mathcal{A}}
\def\EE{\mathcal{E}}
\def\LL{\mathcal{L}}
\def\Ncc{N_{cc}}
\def\bc{{\text{bc}}}
\newcommand{\1}{\ensuremath{\mbox{\rm 1\kern-0.23em I}}}
\begin{document}
\title{Fully-connected bond percolation on $\Z^d$ }

\author{David Dereudre
 \footnote{Univ. Lille, CNRS, UMR 8524, Laboratoire Paul Painlev\'e, F-59000  lille, France.}\\
{\normalsize{\em }}
 }

\maketitle

\begin{abstract}
We consider the bond percolation model on the lattice $\Z^d$ ($d\ge 2$) with the constraint to be fully connected. Each edge is open with probability $p\in(0,1)$, closed with probability $1-p$ and then the process is conditioned to have a unique open connected component (bounded or unbounded). The model is defined on $\Z^d$ by passing to the limit for a sequence of finite volume models with general boundary conditions. Several questions and problems are investigated: existence, uniqueness, phase transition, DLR equations. Our main result involves the existence of a threshold $0<p^*(d)<1$ such that any infinite volume model is necessary the vacuum state in subcritical regime (no open edges) and is non trivial in the supercritical regime (existence of a stationary unbounded connected cluster). Bounds for $p^*(d)$ are given and show that it is drastically smaller than the standard bond percolation threshold in $\Z^d$. For instance $0.128<p^*(2)<0.202$ (rigorous bounds) whereas the 2D bond percolation threshold is equal to $1/2$.

%

\end{abstract}

{\it key words:} FK-percolation, random cluster model, phase transition, FKG inequalities, DLR equations.

\section{Introduction}

In the standard bond percolation model on $\Z^d$, all edges are independently open with probability $p\in(0,1)$ and closed with probability $1-p$. Then the main questions involves the existence or not, the size, the shape of the unbounded connected component of open edges with respect to the parameter $p$. This model is abundantly studied in the percolation theory literature \cite{DC18,Grimmett}. In the present paper we consider this bond percolation model on $\Z^d$ with the constraint to be fully connected. Heuristically it means that we condition the Bernoulli bond percolation model by the event claiming that the number of open  connected components (bounded and unbounded) is equal to one. Obviously this conditioning is forbidden since that the probability measure of the event is null. To this end two rigorous ways are possible and explored here. The first one is to consider the model in a finite window (with some boundary conditions) and to pass to limit with respect to the size of the window.  A second approach is to consider the existence and the description of the model via the so-called DLR (Dobrushin-Lanford-Ruelle) equations which prescribe the local conditional distributions via specifications (see equations \eqref{DLRintro} for details). In the following for $p\in(0,1)$ we denote by $\LL_s(p)$ all stationary accumulation points for the limiting procedure and by $\G_s(p)$ the set of all stationary Gibbs measures, solutions of DLR equations.

Our main result involves the existence of a non trivial threshold $0<p^*(d)<1$ which depends only on the dimension $d\ge 2$ such that 

\begin{itemize} 
\item (subcritical regime) for $p<p^*(d)$, any thermodynamic limit does not have open edges. All edges are closed. The set $\LL_s(p)$ is reduced to the probability measure charging the null configuration (the vacuum state).  Equivalently the set $\G_s(p)$ is empty.

\item (supercritical regime) for $p>p^*(d)$, there exists a non trivial stationary distribution $P$ belonging to $\LL_s(p)$ and $\G_s(p)$.

\end{itemize} 

Note that in the supercritical regime the uniqueness of elements in $\LL_s(p)$ or $\G_s(p)$ is not guaranteed in general. A uniqueness result is only given in the case $d=2$ and $p\ge 1/2$. Rigorous bounds for $p^*(d)$ are also provided (see Theorem \ref{theoboundsintro}). In particular in dimension $d=2$ we have $0.128<p^*(2)<0.202$ and it is remarkable to see that it is drastically  smaller than the 2D bond percolation threshold equals to $1/2$. In any dimension $d\ge 2$, $p^*(d)$ is smaller than the standard percolation threshold $p_c(d)$ and the lower and upper bounds are sharp enough to imply that $p^*(d)\sim e^{-1} p_c(d)$ when $d\mapsto +\infty$.

Further its own interest, there are several motivations to study the fully connected bond percolation model. First this model is related to the so-called random cluster model or FK-percolation; see \cite{RCM} for a general presentation on lattice and \cite{DH} for a recent  version in the continuum. It is defined via the formal unnormalised density $q^{N_{cc}}$ where  $N_{cc}$ is the number of connected component and $q>0$  a positive parameter. Our setting corresponds to the random cluster model associated to the  Widom-Rowlinson lattice model with $q\to 0$ \cite{GMH}. The case $q<1$ is less studied in the literature since the central and crucial FKG inequalities are lost. Our first motivation was to study the case "$q=0$"  without FKG inequalities and monotonicity and to prove a sharp phase transition phenomena as presented above. Note that phase transition results without FKG inequalities/monotonicity are rare in the literature and it still remains a general and global challenge  for several models in  statistical physics.

Connexions with the incipient cluster at criticality are also possible. In dimension $d=2$ it is well known that the Bernoulli bond model does not percolate at criticality $p=p_c(2)=1/2$. The incipient cluster has been introduced to force percolation at criticality by conditioning to the existence of a connected component from  $0$ to infinity. As in our setting  the conditioning is not possible and several strategy has been developed to give a sense to the conditional probability \cite{ BasuIncipient,Antal,IncipientKesten}. With our conditioning we force percolation at criticality and
partially at subcriticality as well.

The main original idea developed in the present paper is to merge the fully connected percolation model in a more general model of statistical physics on $\Z^d$ with two parameters $(\lambda,\mu)\in\R^2$. The formal Hamiltonian is given by 

$$ H=\lambda N+ \mu\partial N +\infty \1_{\{N_{cc}\neq 1\}},$$

where $N$ is the number of open edges, $\partial N$ is the number of closed edges sharing at least one vertex with an open edge and $N_{cc}$ is the number of open clusters. The fully connected bond percolation model corresponds to the case $\lambda=\log(p/(1-p))$ and $\mu=0$. Using tools of statistical physics we study the two-parameter model and prove for any $\mu\in\R$ a phase transition phenomenon with respect to $\lambda$. The tools are related to the pressure function and its properties (convexity, differentiability, non-dependence of boundary conditions, etc). The sharpness of bounds for $p^*(d)$ are also due to the connexion with this two-parameter model. 

The two-parameter model  has also its own interest since it is a special case of random connected weighted sub-graphs picked uniformly in a host graph. We find these kind of graphs in several domains of applied science \cite{LuBressan}. Here the weights are simple and encoded by only two quantities (the volume and the perimeter of the connected component) but more complicated weights and other host graphs than $\Z^d$ could be considered. We believe that our proof of phase transition phenomenon is robust enough to be applied for many different Hamiltonians. Note that the phase transition "empty configuration/unbounded configuration" is relevant for application since it corresponds to the emergence of a macroscopic object inside a very large host graph.

%
%
%

%
%
%
%

Let us finish the introduction with a numerical illustration of the phase transition phenomenon. Using a birth-death Metropolis Hastings algorithm, we sample the fully-connected bond percolation model with free boundary condition on a 2D grid $30*30$. The simulation highlights that  $p^*(2)$ is between 0.15 and 0.2. At the middle, a simulation of the process with $p=0.2$. The monitoring control on the left gives the number of open edges during the run of the algorithm and shows that the equilibrium state seems to be attained. The monitoring control on the right is for $p=0.15$ and shows that the connected component has a microscopic size with respect to the size of the window. It  would disappear at the limit when the size of the window tends to infinity

\begin{figure}[htbp]
    \setlength{\tabcolsep}{0.1cm} \centerline{
      \begin{tabular}[]{cccc}
  \includegraphics[angle=0,scale=.22]{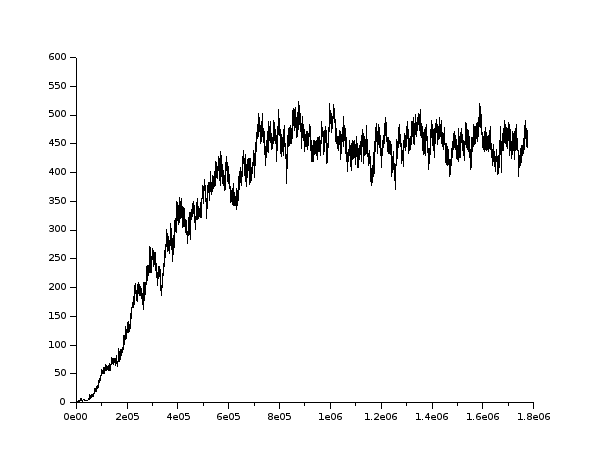} &
  \includegraphics[angle=0,scale=.27]{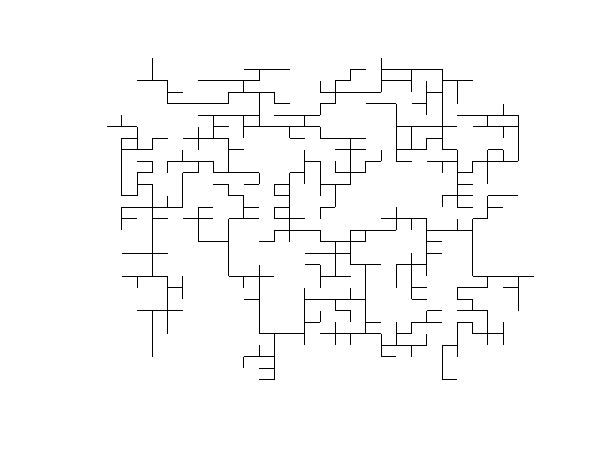} &
       \includegraphics[angle=0,scale=.22]{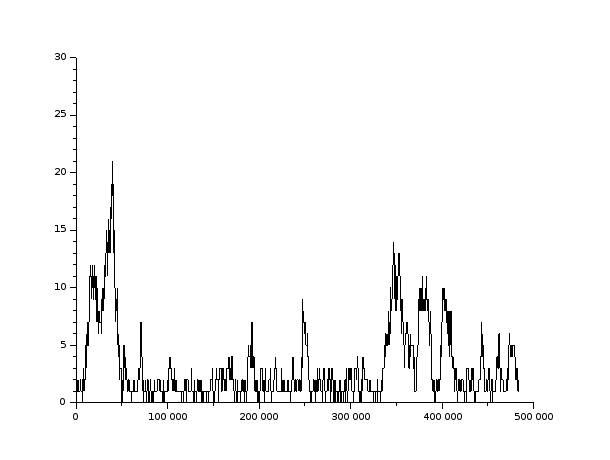} 
      \end{tabular}      
  }
  \end{figure}

The plan of paper is the following. In Section \ref{Section_model} we present the fully connected bond percolation model. The results are given in Section \ref{Section_Results} and the main tools and ideas in Section \ref{Section_Tools}. As mentioned above the fully connected bond percolation model is merged in a more general model with two parameters. It is investigated in details (results and proofs) in Section \ref{Section_2parModel}. In the last Section \ref{Section_proofs}, we give the proofs of results presented in Section \ref{Section_Results}; they are partially based on results from Section \ref{Section_2parModel}. 

\tableofcontents

\section{The fully connected bond percolation model}\label{Section_model}

\subsection{Description of the model}

We denote by $\EE$ the set of edges in $\Z^d$, where an edge is a couple of vertices with distance one for the $L^1$ norm.  The  space of configurations $\O$ is defined by $\{0,1\}^\EE$ and is is equipped with the standard $\sigma$-algebra generated by cylinders. For any $\o$ in $\O$, we say that an edge $e\in\EE$ is open if $\o(e)=1$; it is closed otherwise ($\o(e)=0$). For any $A\subset \EE$ we denote by $\o_{A}$ the restriction of $\o$ to $A$ (i.e. an element in $\{0,1\}^\A$) and for $\o,\o'$ two configurations in $\O$, we denote by $\o_A\o'_{A^c}$ the concatenation of configurations $\o_A$ and $\o'_{A^c}$. For any $\L\subset \Z^d$, the set $\EE_\L$ denotes the edges in $\EE$ such that both extremities are in $\L$.  The space of configuration $\O_{\L}$  is defined by $\{0,1\}^{\EE_\L}$. In the following, without any ambiguity,   we denote by $\o_\L$ an element in $\O_{\L}$ or the restriction of an element $\o\in\O$ to $\O_\L$ (i.e. $\o_\L=\o_{\EE_\L}$). 

For any $p\in (0,1)$, we denote by $\P_p$ (respectively $\P_p^\L$)  the probability measure $B(p)^{\otimes \EE}$ (respectively $\B(p)^{\otimes \EE_\L}$) on $\O$ (respectively $\O_\L$) where $\B(p)$ is the standard Bernoulli distribution with parameter $p$. The set of allowed configurations is denoted by $\A$;
$$\A=\Big\{\text{There is only one connected component of open edges }\Big\}.$$

We have to give a sense to "$\P_p(.|\A)$":\\

{\it -The thermodynamic approach.} We consider the window $\L_n=\{-n,n\}^d$ and the probability measure $\P^{\L_n}_p(. |\A)$ which is well defined. Since that the state space $\Omega$ is compact the sequence $(\P^{\L_n}_p(. |\A))_{n\ge 1}$ admits accumulation points for the weak convergence of measures. Boundary conditions can also be considered in a general matter (wired, free, periodic); for the sake of simplicity the details involving the boundary conditions are given in Definition \ref{definition_boundary} below. We denote by $\LL(p)$ all possible accumulation points for all possible boundary conditions. We are mainly interested in probability measures $P\in\LL(p)$ which are invariant by translations (stationary in space). This set is denoted by $\LL_s(p)$.\\

{\it -The DLR equations approach.} We are looking for  probability measures $P$ on $\O$ such that $P(\A)=1$ and such that given the outside configuration $\o_{\L^c}$ the configuration $\o_\L$ inside $\L$ is picked uniformly on $\A$ with the distribution $\P_p^\L$. That leads to the following conditional distribution for $P$-a.e. $\o_{{\EE^c_\L}}$

\begin{equation}\label{DLRintro}
P(d\o_\L|\o_{{\EE^c_\L}})=\frac{1}{Z_\L(\o_{{\EE^c_\L}})} 1_{\A}(\o) \P_p^\L(d\o_\L), 
\end{equation}

where $Z_\L(\o_{{\EE^c_\L}})$ is the normalizing constant. The collection of equations \eqref{DLRintro} for all bounded $\Lambda\subset \Z^d$ are called DLR equations (for Dobrushin, Lanford and Ruelle). We denote by $\G(p)$ the set of all probability measures $P$ such that $P(\A)=1$ and such that DLR equations hold. The set of probability measures $P\in\G(p)$ which are invariant by translations is denoted  $\G_s(p)$. 

Both approaches are common in statistical physics. For a non-expert reader we advice the general references \cite{Velenik,GeorgiiBook}. Note that at this stage only the existence of element $P\in\LL(p)$ is ensured by a simple compactness argument. The set $\LL_s(p)$, $\G(p)$ and $\G_s(p)$ could be empty. We will introduce a periodic boundary condition which necessary will produce stationary elements in  $\LL(p)$. So $\LL_s(p)$ will be not empty as well.

\subsection{Results}\label{Section_Results}

Our main motivation is to study the elements and the geometry  of $\LL(p), \LL_s(p), \G(p)$ or $\G_s(p)$ with respect to the parameter $p\in(0,1)$. Can we observe phase transition phenomena? i.e. abrupt modifications for some special critical values for $p$? Are the spaces identical and/or reduced to a singleton and/or empty? 

A first natural question is to know if  $\LL(p)\neq\LL_s(p)$ or $\G(p)\neq\G_s(p)$. This phenomenon is well-known and identified by the statistical physics community as the symmetry breaking (here the translations would be broken). It is a general and difficult question mainly open for all models. We do not investigate it here and we will focus mainly (but not only) on $\LL_s(p)$ and  $\G_s(p)$.

Our first result involves the connectivity of elements  $P$ in $\LL_s(p)$ or $\G_s(p)$. In ad-equation with our heuristic definition we expect that $P(\A)=1$. It is guaranteed by definition for $P\in\G_s(p)$ but for $P\in\LL_s(p)$ it is more complicated because during the thermodynamic limit two bad phenomena may occur: the unique connected component can disappear at infinity and it remains no open edge; i.e. the probability measure limit is the measure $\delta_{0^\EE}$, where

$$ 0^\EE \text{ is the configuration full of } 0 \text{ (the vacuum configuration)}.$$

or the unique connected component in finite volume regime splits in severals parts in the infinite volume regime. This second scenario is in fact impossible.

\begin{theo}\label{thegeointro}
For any $P\in\LL_s(p)$, $P(\A\cup\{0^\EE\})=1$.
\end{theo}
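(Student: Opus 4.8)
The plan is to split the complement of $\A\cup\{0^\EE\}$ according to whether a finite open cluster is present, and to treat the two resulting cases by entirely different means: a soft weak-convergence argument against cylinder events for the finite-cluster part, and a Burton--Keane type argument exploiting stationarity for the purely infinite part. Throughout, $P=\lim_k \P_p^{\L_{n_k}}(\cdot\,|\A)$ denotes a stationary accumulation point.

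First I would rule out the coexistence of a finite open cluster with any other open edge. Fix two concentric finite boxes of edges $F\subset F'$ and consider the cylinder event $\mathrm{Iso}_F$ that some nonempty set $S$ of open edges inside $F$ has all of its incident edges (also lying in $F$) closed, so that $S$ is a maximal, isolated, finite open cluster. The event $\mathrm{Iso}_F\cap\{\text{some edge of }F'\setminus F\text{ is open}\}$ is again a cylinder event, and in finite volume it is $\P_p^{\L_n}(\cdot\,|\A)$-null for every $n$ with $F'\subset\EE_{\L_n}$: an isolated cluster inside $F$ together with an open edge outside $F$ would produce at least two components, which $\A$ forbids (and this is insensitive to the boundary condition, since a wired boundary only adds further components). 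Cylinder events are $P$-continuity sets, so weak convergence gives $P(\mathrm{Iso}_F\cap\{\text{some edge of }F'\setminus F\text{ is open}\})=0$, and letting $F'\uparrow\EE$ yields $P(\mathrm{Iso}_F\cap\{\text{some open edge outside }F\})=0$. Since any configuration carrying a finite open cluster satisfies $\mathrm{Iso}_F$ for some box $F$, the bad event ``a finite cluster coexists with another open edge'' is a countable union $\bigcup_F\big(\mathrm{Iso}_F\cap\{\text{open edge outside }F\}\big)$ of $P$-null sets, hence $P$-null. Consequently the only surviving obstruction is $B:=\{\text{there are at least two open clusters, all of them infinite}\}$.

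The remaining task is to show $P(B)=0$, and here stationarity is essential and the argument is genuinely non-local: two disjoint infinite clusters cannot be distinguished from two finitely connected pieces by any cylinder event, so the mechanism above is powerless. I would instead run a Burton--Keane scheme for the translation-invariant $P$. The key preliminary is a \emph{conditional gluing tolerance}: for the event $U$ (measurable outside a box $\L_R$) that every open cluster meets $\partial\L_R$, opening all edges inside $\L_R$ keeps the configuration in $\A$, and in finite volume the Bernoulli reweighting gives the uniform bound $\P_p^{\L_n}(\{\o_{\L_R}\equiv 1\}\cap U\,|\,\A)\ge p^{\,|\EE_{\L_R}|}\,\P_p^{\L_n}(U\,|\,\A)$; as an inequality between cylinder events it passes to the limit, giving $P(\{\o_{\L_R}\equiv 1\}\cap U)\ge p^{\,|\EE_{\L_R}|}P(U)$. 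Feeding this restricted finite-energy property, valid exactly when all clusters thread the box, into the gluing argument rules out any finite number $k\ge 2$ of infinite clusters: on the event that all $k$ of them meet $\L_R$, the gluing produces a single infinite cluster, contradicting the almost-sure constancy (after ergodic decomposition) of the number of infinite clusters under the stationary $P$.

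The main obstacle is precisely the failure of full finite energy: opening or closing a single edge can leave $\A$, so neither insertion nor deletion tolerance is available in its usual form, and the classical trifurcation count excluding $k=\infty$ is not directly licensed, since merging finitely many of infinitely many clusters does not preserve admissibility and the conditional tolerance above then does not apply. I expect to close this last gap either by showing that $N=\infty$ is incompatible with the origin of $P$ as a limit of single-cluster measures -- the number of clusters in $\L_n$ is identically one, forcing a vanishing cluster density which, with stationarity, leaves no room for infinitely many infinite clusters -- or by a dedicated counting argument invoking the conditional tolerance on the event that all but finitely many clusters avoid a large box. Once $N\le 1$ is established, $B$ is $P$-null, and together with the first step this gives $P(\A\cup\{0^\EE\})=1$.
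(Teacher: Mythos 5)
Your architecture coincides with the paper's: you split $\{N_{cc}\ge 2\}$ into ``at least one cluster is finite'' (killed by locally null cylinder events and weak convergence, which is the first half of the proof of Proposition \ref{Prop_notbounded}) and ``at least two clusters, all infinite'' (attacked by a Burton--Keane scheme built on a restricted insertion tolerance, which is exactly the paper's Lemma \ref{localmodif} feeding into Theorem \ref{TheoFCBMconnected}). The difficulty you isolate -- that the only licensed modification is ``open every edge of a box, conditionally on the clusters reaching its boundary'' -- is the same one the paper works around. But as written your argument has one genuine gap and two repairable slips.

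The gap is the case $N_{cc}=\infty$, which you acknowledge but do not close. Neither of your two candidate strategies works as stated. The ``vanishing cluster density'' idea proves nothing: the natural density $\sum_{x\in\L_k}|C_x|^{-1}/\#\L_k$ vanishes on any configuration all of whose clusters are infinite, so a zero cluster density is perfectly compatible with infinitely many infinite clusters; and ``infinitely many infinite clusters'' is not approximable by cylinder events whose finite-volume probabilities you control, so no contradiction with the single-cluster origin of $P$ comes for free. The second strategy conditions on ``all but finitely many clusters avoid a large box,'' which is not an event of positive probability when $N_{cc}=\infty$ under a stationary $P$. This is precisely where the paper invokes the trifurcation count (itself only sketched there); your write-up stops short of it, so the exclusion of $N_{cc}=\infty$ remains unproved. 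Two smaller points. First, your inequality $\P_p^{\L_n}(\{\o_{\L_R}\equiv 1\}\cap U\mid\A)\ge p^{\#\EE_{\L_R}}\,\P_p^{\L_n}(U\mid\A)$ does not ``pass to the limit as an inequality between cylinder events'': the event $U$ (every open cluster meets $\partial\L_R$) depends on all edges outside $\L_R$ and is not a cylinder event. The paper's Lemma \ref{localmodif} handles exactly this by approximating the indicator of the exterior event with local functions via martingale convergence before exchanging limits; you need that step. Second, the union $\bigcup_F\bigl(\mathrm{Iso}_F\cap\{\text{open edge outside }F\}\bigr)$ over boxes $F$ does not cover every configuration with a finite cluster and a second cluster: if the second cluster lies inside every box containing the first cluster together with its closed incident edges (two nearby isolated edges and nothing else, say), no $F$ in your union witnesses it. The fix is the paper's formulation: for growing boxes $\L$, use the cylinder events ``some cluster is entirely contained in $\L$ and some other cluster hits $\L$,'' which are null in finite volume and exhaust the bad set.
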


Let us  give an heuristic argument explaining why the limit can be $0^\EE$. If $p$ is small the unique connected component produced by  $\P^{\L_n}_p(. |\A)$ has a microscopic size with respect to the size of the window and disappears at the limit. Actually $p$ has to be large enough in order to produce a macroscopic connected component which survives when passing to the limit. It is natural to look for a threshold which separates the existence/non-existence of a macroscopic connected component in the thermodynamic limit. However the existence of such a threshold is not obvious since the model does not inherit any stochastic monotony with respect to $p$ (FKG inequality does not hold here).

\begin{theo}\label{theothtesholdintro}
For any $d\ge 2$, there exists a threshold $0<p^*(d)<1$ such that

\begin{itemize}
\item if $p>p^*(d)$,  there exists $P$ in $\in\LL_s(p)\cap \G_s(p)$ with $P(\A)=1$.
\item if $p<p^*(d)$, $\G_s(p)=\emptyset$ and $\LL_s(p)=\{\delta_{0^\EE}\}$.
\end{itemize}
\end{theo}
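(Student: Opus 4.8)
The plan is to embed the fully connected model into the two-parameter family with Hamiltonian $H=\lambda N+\mu\partial N+\infty\1_{\{\Ncc\neq 1\}}$ and to read the phase transition off the associated pressure. Fixing $\mu=0$ and setting $\lambda=\log(p/(1-p))$, the relevant object is the pressure per site $\psi(\lambda)=\lim_n|\L_n|^{-1}\log Z_{\L_n}$, where after factoring out the Bernoulli normalisation the partition function is $\sum_{\o\in\A\cap\O_{\L_n}}e^{\lambda N(\o)}$. From the study of the two-parameter model (Section \ref{Section_2parModel}) I would import that this limit exists, is finite, convex and non-decreasing in $\lambda$, and is independent of the chosen (free, wired or periodic) boundary condition. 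Convexity is the structural substitute for the missing FKG/monotonicity: it is what forces the transition to be sharp even though there is no stochastic comparison in $p$.

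Next I would establish the dichotomy $\psi(\lambda)=0$ for $\lambda\ll 0$ versus $\psi(\lambda)>0$ for $\lambda\gg 0$. For the subcritical bound I would control $\sum_{\o}e^{\lambda N(\o)}$ by the number of connected edge-subgraphs (lattice animals) rooted in $\L_n$: the number of connected subgraphs with $k$ edges through a fixed edge is at most $C^k$ for a dimensional constant $C$, so the sum is bounded by $|\L_n|\sum_{k\ge 0}(Ce^\lambda)^k$, which stays of order $|\L_n|$ once $e^\lambda<1/C$, whence $\psi(\lambda)=0$ there. For the supercritical bound I would exhibit one connected spanning structure, e.g. a spanning tree, whose number in $\L_n$ grows like $e^{\tau_d|\L_n|(1+o(1))}$ with a positive tree-entropy $\tau_d$; this gives $\psi(\lambda)\ge\lambda+\tau_d+o(1)$, strictly positive once $\lambda>-\tau_d$. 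Since $\psi$ is convex, non-decreasing and nonnegative (a single open edge already yields $|\L_n|^{-1}\log Z_{\L_n}\ge o(1)$), the set $\{\psi=0\}$ is a half-line $(-\infty,\lambda^*]$ with $\lambda^*\in\R$. I would then define $p^*(d)$ by $\lambda^*=\log(p^*/(1-p^*))$; the two bounds $-\log C\le\lambda^*\le-\tau_d$ already give $0<p^*(d)<1$, and a sharpened version yields the quantitative estimates of Theorem \ref{theoboundsintro}.

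The last and main step is to transfer the behaviour of $\psi$ to the measures through the edge density. Because $N=\sum_e\o(e)$ is conjugate to $\lambda$, the derivative $\psi'(\lambda)$ equals the limiting density $\rho=P(\o(e)=1)$ at every point of differentiability, the interchange of limit and derivative being justified by convexity. In the supercritical regime $\lambda>\lambda^*$ one has $\psi(\lambda)>\psi(\lambda^*)=0$, so $\psi'(\lambda)>0$; taking a stationary accumulation point $P$ produced by periodic boundary conditions gives $\rho>0$, hence $P\neq\delta_{0^\EE}$, and Theorem \ref{thegeointro} upgrades this to $P(\A)=1$. That this $P$ solves the DLR equations, i.e. $P\in\G_s(p)$, I would obtain by passing the finite-volume conditional description \eqref{DLRintro} to the limit. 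In the subcritical regime $\lambda<\lambda^*$ the function $\psi$ is identically $0$ on a neighbourhood of $\lambda$, so $\psi'(\lambda)=0$ and every stationary accumulation point has $\rho=0$; by translation invariance $P(\o(e)=1)=0$ for all $e$, whence $P=\delta_{0^\EE}$ and $\LL_s(p)=\{\delta_{0^\EE}\}$. Finally $\G_s(p)=\emptyset$: a stationary DLR solution has $P(\A)=1$, so $0^\EE$ is excluded, and stationarity then forces an infinite open cluster, hence $\rho>0$, contradicting the vanishing density dictated by the flat pressure.

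The hard part will be precisely these transfer arguments, because the defining event $\A$ is not local: it encodes global connectivity, so the standard Gibbsian machinery (quasilocal specifications, variational principle) does not apply off the shelf. Two points require genuine care: showing that the supercritical limit inherits the DLR equations despite the non-quasilocality of $\1_\A$, and ruling out Gibbs measures in the subcritical regime, that is, linking the density of an \emph{arbitrary} element of $\G_s(p)$ to $\psi'(\lambda)$ without any monotonicity. Both are to be handled by leaning on convexity of the pressure and on the explicit finite-volume representations, rather than on any stochastic comparison in $p$.
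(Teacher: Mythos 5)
Your global strategy is the same as the paper's: embed the model in the two-parameter family, define $\lambda^*$ as the edge of the flat part of the (convex, nondecreasing, boundary-condition-independent) pressure, and set $p^*=e^{\lambda^*}/(1+e^{\lambda^*})$. Two of your sub-arguments differ from the paper's and are legitimate alternatives: your elementary lattice-animal/spanning-tree bounds do give $\lambda^*\in\R$ (the paper instead gets finiteness, and sharpness, from the Bernoulli-cluster representation plus convexity of $\{\PP=0\}$ and an isoperimetric bound on $\partial N$), and your Griffiths-type ``derivative of the pressure equals the edge density'' argument for periodic boundary conditions is a clean substitute for the paper's specific-entropy comparison in the supercritical case. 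But there are two genuine gaps in the transfer steps.

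First, the subcritical deduction ``$\psi'(\lambda)=0$, hence every stationary accumulation point has $\rho=0$'' is not valid for $\LL_s(p)$ as defined, i.e.\ over \emph{arbitrary} boundary conditions. The convexity argument controls $\frac{1}{\#\EE_n}E_{Q_n^{\text{bc}}}(N_n)$, a spatial average over $\EE_n$; for a non-translation-invariant finite-volume measure this average can vanish while the marginal $Q_n^{\text{bc}}(e_0\text{ open})$ at a fixed edge stays bounded away from $0$, and the weak limit can still be stationary (think of a finite-volume measure concentrating a fully open sub-box of side $\sqrt n$ around the origin). Your argument only settles the periodic case. The paper avoids this by reversing the logical order: it first proves a bound \emph{uniform over all Gibbs-kernel boundary configurations}, $\sup_{\tilde\o\in\A}\frac{1}{\#\EE_n}\int N\,dQ_{\L_n}^{\tilde\o}\to 0$ (this needs the quantitative comparison $|\log Z_n^{\tilde\o}-\log Z_n^{\text{wired}}|\le C\#\EE_n/\sqrt n$, not just equality of the limiting pressures), uses stationarity plus the DLR equation of a hypothetical $P\in\G_s$ to conclude $\G_s(p)=\emptyset$, and only then gets $\LL_s(p)=\{\delta_{0^\EE}\}$ from the theorem that any non-vacuum $P\in\LL_s$ yields $P(\cdot\,|\{0^\EE\}^c)\in\G_s$. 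Your closing remark shows you sensed this difficulty for $\G_s$, but your route to $\LL_s$ bypasses it incorrectly.

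Second, in the supercritical regime, Theorem \ref{thegeointro} gives $P(\A\cup\{0^\EE\})=1$, not $P(\A)=1$: a periodic accumulation point with positive edge density may still carry a vacuum component, so ``$P\neq\delta_{0^\EE}$'' only yields $P(\A)>0$. Conditioning on $\A$ produces an element of $\G_s(p)$ (this is the non-trivial DLR-passage theorem, with the localization of the non-local event $\A$), but a priori the conditioned measure leaves $\LL_s(p)$, so the claimed membership in $\LL_s(p)\cap\G_s(p)$ with $P(\A)=1$ is not established. The paper closes this by the ergodic (Choquet) decomposition of $\G_s(p)$: an extremal stationary Gibbs measure is the limit of finite-volume measures with $P$-typical deterministic boundary conditions, hence lies in $\LL_s(p)$ as well. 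Without this step (or a proof that the periodic limit has no vacuum component), the first item of the theorem is not proved.
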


Moreover we can prove that $\G_s(p)$ and $\LL_s(p)$ are almost identical excepted the possible existence of the null configuration. Actually $ \G_s(p)\subset \LL_s(p)$ and for all $ P\in\LL_s(p)$ such that $P(\A)>0$ then $P(.|\A)\in\G_s(p)$. 

Note that for $p>p^* (d)$ the unique connected component is necessary unbounded (by a stationary argument) and so percolation occurs. An experimented reader in percolation theory should be interested in comparing the threshold $p^*(d)$ in Theorem \ref{theothtesholdintro} with the standard percolation threshold $0<p_c(d)<1$ defined by
\begin{equation}\label{thresholdperco}
 p_c(d)=\inf\{p>0,\,\P_p(\text{ there exists an unbounded connected component})=1\}.
 \end{equation}
We advice \cite{Grimmett} for definition and  first properties of such a threshold. In particular in the following we will use the facts that $p_c(2)=1/2$ and $p_c(d)\sim 1/2d$ when $d\to \infty$.   Our main and original result provides sharp bounds for $p^*(d)$ and comparison with $p_c(d)$.

\begin{theo}\label{theoboundsintro}
For any $d\ge 2$
 
\begin{equation}\label{boundsintro} 
 \frac{e^{\lambda_\text{min}^*(d)}}{1+e^{\lambda_\text{min}^*(d)}}\le p^*(d) \le \frac{e^{\lambda_\text{max}^*(d)}}{1+e^{\lambda_\text{max}^*(d)}},
 \end{equation}
  with 
 $$ \lambda_\text{min}^*(d)=-\log(2d-1) + (2d-2)\log\left(\frac{2d-2}{2d-1}\right),$$
 $$ \lambda_\text{max}^*(d)=-\log(p_c(d)) + \frac{1-p_c(d)}{p_c(d)} \log(1-p_c(d)),$$
\end{theo}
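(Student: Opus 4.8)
The plan is to recast everything through the two-parameter model of Section \ref{Section_2parModel} and its pressure. Since the fully-connected model is the slice $\mu=0$, and since $p\mapsto\log\frac{p}{1-p}$ is an increasing bijection of $(0,1)$ onto $\R$ with inverse $\lambda\mapsto e^\lambda/(1+e^\lambda)$, the threshold $p^*(d)$ is the image of a critical activity $\lambda^*(d)$ and the asserted inequalities are exactly $\lambda_\text{min}^*(d)\le\lambda^*(d)\le\lambda_\text{max}^*(d)$. Here $\lambda^*(d)$ separates the vacuum phase ($P=\delta_{0^\EE}$) from the percolating phase, and by the pressure analysis of Section \ref{Section_2parModel} it is the point where the relative partition function $Z_{\L_n}:=\sum_{C}e^{\lambda|C|}$ — the sum over the empty configuration and all \emph{connected} open clusters $C\subset\EE_{\L_n}$, each weighted by its number of open edges $|C|$ — ceases to have zero exponential rate: $\lambda<\lambda^*(d)$ iff $\tfrac1{|\L_n|}\log Z_{\L_n}\to0$, iff the cluster through a fixed edge stays microscopic so that $\P^{\L_n}_p(\,\cdot\mid\A)\Rightarrow\delta_{0^\EE}$.

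For the \textbf{lower bound} I would show that the cluster through a fixed edge is subcritical once $\lambda<\lambda_\text{min}^*(d)$. Exploring a connected open cluster from a root edge, each newly reached vertex offers $2d-1$ forward directions, each carrying activity $x:=e^\lambda$; comparing with the Galton--Watson branching whose per-branch generating function solves $B=x(1+B)^{2d-1}$ bounds the weighted number of clusters. Its radius of convergence is found from $B=x(1+B)^{2d-1}$ together with the criticality condition $x(2d-1)(1+B)^{2d-2}=1$, which give $B=\tfrac1{2d-2}$ and
$$x_c=\frac{(2d-2)^{2d-2}}{(2d-1)^{2d-1}}=e^{\lambda_\text{min}^*(d)}.$$
Thus for $x<x_c$ the branching is subcritical, $\sum_{C\ni o}e^{\lambda|C|}$ is bounded uniformly in $n$, hence $\P^{\L_n}_p(o\in C\mid\A)\to0$ and every stationary accumulation point is $\delta_{0^\EE}$, giving $\lambda^*(d)\ge\lambda_\text{min}^*(d)$.

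For the \textbf{upper bound} I would produce a non-trivial state whenever $\lambda>\lambda_\text{max}^*(d)$ by bounding the pressure from below. The key remark is that the two-parameter weight $e^{\lambda|C|+\mu\partial C}$ reproduces the Bernoulli weights of a cluster and of its closed boundary exactly along the curve $(\lambda,\mu)=(\log p,\log(1-p))$, so that supercriticality of Bernoulli percolation for $p>p_c(d)$ forces a macroscopic cluster there. Estimating the exponential growth rate of macroscopic connected clusters by comparison with critical percolation — the open edges, at internal density $p_c(d)$, contributing the energy $-\log p_c(d)$ and the accompanying closed edges the entropy $\tfrac{1-p_c(d)}{p_c(d)}\log(1-p_c(d))$ — shows that $\tfrac1{|\L_n|}\log Z_{\L_n}>0$ past the explicit value $\lambda_\text{max}^*(d)$, whence a non-trivial $P\in\LL_s(p)\cap\G_s(p)$ and $\lambda^*(d)\le\lambda_\text{max}^*(d)$.

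I expect the real difficulty to lie in the lower bound, for two reasons. First, the absence of FKG/monotonicity forbids dominating the conditioned law by ordinary percolation, so the branching estimate must be run directly on the conditional weights $e^{\lambda|C|}$. Second, the branching above is sharp only for \emph{tree} clusters; to retain the exact constant $x_c$ one must control clusters with cycles, using that every surplus edge carries the penalizing factor $e^\lambda<1$ and that self-avoidance makes the Galton--Watson count an over-estimate, so the back-edges do not move the radius of convergence. Finally, transferring the finite-volume bounds to all of $\LL_s(p)$ and $\G_s(p)$ uses the pressure/DLR framework of Section \ref{Section_2parModel}.
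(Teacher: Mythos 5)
Your overall framework coincides with the paper's: everything is routed through the two--parameter model, the pressure $\PP(\lambda,\mu)$ and the identity $p^*(d)=e^{\lambda^*(0)}/(1+e^{\lambda^*(0)})$, so the theorem reduces to $\lambda_\text{min}^*(d)\le \lambda^*(0)\le \lambda_\text{max}^*(d)$, exactly as in Section~\ref{Section_proofs}. Your \emph{lower} bound, however, takes a genuinely different route. The paper (Proposition~\ref{superlowerbound}) uses the isoperimetric inequality $\partial N_{\L_n}\le (2d-2)N_{\L_n}+Cn^{d-1}$ to get $\PP(\lambda-(2d-2)t,\mu+t)\le\PP(\lambda,\mu)$ and transports the exactly solvable zero--pressure curve $\lambda=\log(1-e^{\mu})$ to the axis $\mu=0$; optimizing $\log p+(2d-2)\log(1-p)$ gives $\lambda^*_\text{min}$ at $p=1/(2d-1)$. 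You instead bound the generating function $\sum_{C\ni e_0}e^{\lambda|C|}$ of connected clusters through a fixed edge by a $(2d-1)$-ary tree count with radius of convergence $x_c=(2d-2)^{2d-2}/(2d-1)^{2d-1}=e^{\lambda^*_\text{min}}$; since $Z_n\ge \#\EE_n\,e^{\lambda}$, uniform boundedness of that sum forces $Q_n(e_0\text{ open})\to 0$ and $\PP(\lambda,0)=0$, hence $\lambda^*(0)\ge\lambda^*_\text{min}$. The constant is the same (no accident: $x_c^{-1}=\inf_p p^{-1}(1-p)^{-(2d-2)}$), but the point you flag about cycles is the real obstruction: a naive ``spanning tree plus surplus edges'' accounting inflates the growth rate, and the clean proof of the sharp animal bound $\#\{C\ni e_0:|C|=N\}\le C_d\,x_c^{-N}$ is precisely $\P_p(|C_{e_0}|=N)\le 1$ combined with $\partial N\le(2d-2)N+2d$ --- i.e.\ the paper's mechanism in combinatorial disguise. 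With that lemma supplied, your lower bound is correct.

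The \emph{upper} bound is where I see a genuine gap. The coefficient $\frac{1-p_c}{p_c}$ in $\lambda_\text{max}^*$ is not an ``entropy of the accompanying closed edges'' of a critical cluster in any direct sense: at $p=p_c$ the edge density and the boundary density of the boundary-touching clusters both vanish (at least if $\theta(p_c)=0$), so a growth-rate comparison ``at internal density $p_c(d)$'' yields no positive lower bound on the pressure. What actually produces the constant in the paper (Proposition~\ref{superupperbound}) is convex analysis: $\{\PP=0\}$ is a convex set, its boundary contains the arc $\{(\mu,\log(1-e^{\mu})):\mu\le\log(1-p_c)\}$ by Theorem~\ref{theoremexplicit}, and the tangent line at the endpoint of that arc, of slope $-\frac{1-p_c}{p_c}$, is a supporting line; evaluating it at $\mu=0$ gives $\lambda^*_\text{max}$. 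One can recast this as your tilting idea at $p>p_c$, namely $\PP(\lambda,0)\ge(\lambda-\log p)\rho_N(p)-\log(1-p)\rho_{\partial N}(p)$ by Jensen, but then one must identify $\rho_{\partial N}(p)/\rho_N(p)=(1-p)/p$ before letting $p\downarrow p_c$ --- and that identity again comes from differentiating the pressure along the flat Bernoulli curve, i.e.\ from the same convexity. Your sketch invokes neither the convexity of $\PP$ nor the supporting-line argument, and without one of them the value $\lambda^*_\text{max}$ does not come out; it also takes for granted the hard half of the anchor, $\lambda^*(\mu)\le\log(1-e^{\mu})$ for $\mu<\log(1-p_c)$, which requires showing that the infinite Bernoulli cluster is a nontrivial element of $\G_s$ (Proposition~\ref{propbernoulli} together with Theorem~\ref{TheoremThreshold}).
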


In particular, for $d=2$, since $p_c(2)=1/2$ we obtain that  

\begin{equation}\label{encadrement1} 0.128<p^*(2)<0.202.
\end{equation}

In the case $d=3$, the threshold $p_c(3)$ is unknown but numerical approximation gives $p_c(3)\approx 0.25$ \cite{WZZTD} and therefore

\begin{equation}\label{encadrement2}
0.075<p^*(3)<0.099.
\end{equation}

It is remarkable to note that $p^*(d)$ is drastically smaller than $p_c(d)$. Actually a simple study of bounds in Theorem \ref{theoboundsintro} provides the following asymptotic result.

\begin{coro}
When $d\to \infty$ the following equivalence holds
$$ p^*(d)\sim e^{-1}p_c(d). $$ 
\end{coro}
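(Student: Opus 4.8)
The plan is to read off the asymptotic equivalence directly from the sandwich of Theorem \ref{theoboundsintro} by a squeeze argument: since
$$\frac{e^{\lambda_\text{min}^*(d)}}{1+e^{\lambda_\text{min}^*(d)}}\le p^*(d)\le \frac{e^{\lambda_\text{max}^*(d)}}{1+e^{\lambda_\text{max}^*(d)}},$$
it suffices to show that both the lower and the upper bound are equivalent to $e^{-1}p_c(d)$ as $d\to\infty$. The first thing I would check is that both exponents tend to $-\infty$: indeed $\lambda_\text{min}^*(d)=-\log(2d-1)+(2d-2)\log\left(1-\frac{1}{2d-1}\right)\to-\infty$, and, using $p_c(d)\to 0$ together with $\frac{1-p_c(d)}{p_c(d)}\log(1-p_c(d))\to -1$, also $\lambda_\text{max}^*(d)\to-\infty$. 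Consequently the logistic map linearises, namely $\frac{e^{\lambda}}{1+e^{\lambda}}=e^{\lambda}(1+o(1))$ as $\lambda\to-\infty$, so each bound is equivalent to the bare exponential $e^{\lambda_\text{min}^*(d)}$, respectively $e^{\lambda_\text{max}^*(d)}$, and the problem reduces to the asymptotics of these two exponentials.

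For the lower bound I would exponentiate and write
$$e^{\lambda_\text{min}^*(d)}=\frac{1}{2d-1}\left(1-\frac{1}{2d-1}\right)^{2d-2},$$
recognising the classical limit $\left(1-\frac{1}{m}\right)^{m-1}\to e^{-1}$ with $m=2d-1\to\infty$, which yields $e^{\lambda_\text{min}^*(d)}\sim \frac{1}{2d-1}\,e^{-1}\sim \frac{1}{2d}\,e^{-1}$. For the upper bound I would similarly exponentiate to get $e^{\lambda_\text{max}^*(d)}=p_c(d)\,(1-p_c(d))^{(1-p_c(d))/p_c(d)}$ and use $(1-p)^{(1-p)/p}\to e^{-1}$ as $p\to 0$ (which is exactly the content of $\frac{1-p}{p}\log(1-p)\to -1$), so that $e^{\lambda_\text{max}^*(d)}\sim e^{-1}p_c(d)$.

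Finally I would invoke the known asymptotic $p_c(d)\sim \frac{1}{2d}$ as $d\to\infty$, recalled in the text, to rewrite the lower-bound estimate as $\frac{1}{2d}\,e^{-1}\sim e^{-1}p_c(d)$. Both bounds being then equivalent to $e^{-1}p_c(d)$, the squeeze gives $p^*(d)\sim e^{-1}p_c(d)$. The computation is essentially routine; the only points that require genuine care are verifying that both exponents truly diverge to $-\infty$ (so that the logistic transform may legitimately be replaced by the exponential), and handling the two occurrences of the limit $e^{-1}$ rigorously, for instance through the expansions $(m-1)\log\left(1-\frac{1}{m}\right)=-1+O(1/m)$ and $\frac{1-p}{p}\log(1-p)=-1+O(p)$, together with the control of the error terms so that nothing beyond the leading order is lost in the equivalence.
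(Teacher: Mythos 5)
Your proof is correct and is precisely the ``simple study of the bounds'' that the paper alludes to without writing out: a squeeze between the two bounds of Theorem \ref{theoboundsintro}, linearization of the logistic map once both exponents are seen to tend to $-\infty$, and the elementary limits $(1-1/m)^{m-1}\to e^{-1}$, $(1-p)^{(1-p)/p}\to e^{-1}$ together with $p_c(d)\sim 1/(2d)$. One remark: your identity $e^{\lambda_\text{max}^*(d)}=p_c(d)\,(1-p_c(d))^{(1-p_c(d))/p_c(d)}$ tacitly reads the first term of $\lambda_\text{max}^*(d)$ as $+\log(p_c(d))$ rather than the $-\log(p_c(d))$ printed in Theorem \ref{theoboundsintro}; the printed sign is a typo (the $+$ version is what Proposition \ref{superupperbound} gives at $\mu=0$ and what reproduces the bound $0.202$ for $d=2$), so your reading is the intended one --- with the literal printed sign the upper bound would tend to $1$ and the squeeze would fail.
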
 

We do not have any reason to believe that $ p^*(d)$ would be equal to $ e^{-1}p_c(d) $ for all $d\ge 2$. However note that identities $e^{-1} p_c(2)=0.184$ and $e^{-1}p_c(3)\approx 0.091$  are compatible with bounds \eqref{encadrement1} and \eqref{encadrement2}.

Let us now turn to a last theorem which claims that for $p$ large enough the four sets $\LL(p), \LL_s(p), \G(p)$ and $\G_s(p)$ are identical and reduced to a singleton (modulo a vacuum part).

\begin{theo}\label{theouniciteintro} For $d=2$ and $p\ge 1/2$ there exists a stationary probability measure $P$ such that

$$\G(p)=\G_s(p)=\{P\}.$$ 

Moreover for any $Q\in\LL(p)$, there exists $\alpha\in[0,1]$ such that $Q=\alpha P+(1-\alpha) \delta_{0^\EE}.$

\end{theo}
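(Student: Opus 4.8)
The plan is to establish the uniqueness statement for $d=2$, $p\ge 1/2$ by exploiting the single genuine monotonicity tool available to us: the self-duality of the square lattice and the fact that at $p\ge 1/2$ the dual parameter $p^*=1-p\le 1/2$ is subcritical (or critical). The key structural input is Theorem \ref{theothtesholdintro} together with the remark that follows it, namely $\G_s(p)\subset\LL_s(p)$ and that for $P\in\LL_s(p)$ with $P(\A)>0$ one has $P(\cdot\mid\A)\in\G_s(p)$. This already reduces the problem: every element of $\LL(p)$ decomposes into a vacuum part and a part supported on $\A$, so the real content is to show that the conditioned-on-$\A$ Gibbs measure is unique. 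First I would argue existence of at least one stationary $P$ with $P(\A)=1$; this is immediate from Theorem \ref{theothtesholdintro} since $p\ge 1/2>p^*(2)$ by the bound \eqref{encadrement1}. So the crux is uniqueness among all (not necessarily stationary) elements of $\G(p)$, plus the decomposition of arbitrary $Q\in\LL(p)$.

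The main engine will be a duality argument specific to $\Z^2$. A configuration $\o\in\A$ with a single connected component of open edges has a dual picture on the dual lattice: the closed edges of $\o$ correspond, via the planar dual, to open dual edges, and the constraint that the primal open cluster is unique and connected translates into a topological constraint on the dual closed-edge configuration. The essential point is that when $p\ge 1/2$, the dual edge-weight $1-p\le 1/2$ sits at or below criticality, so the dual configuration is ``subcritical bond percolation under a connectivity-type constraint.'' I would make precise that the conditioning to $\A$, read in the dual, forces a unique infinite face/structure that pins down the boundary behaviour, and then invoke a uniqueness-of-Gibbs-measure criterion. Concretely, I would try to show that the specification \eqref{DLRintro} restricted to $\A$ admits a unique infinite-volume Gibbs measure by verifying that the finite-volume conditional distributions $P(d\o_\L\mid\o_{\EE_\L^c})$ become asymptotically independent of the boundary condition $\o_{\EE_\L^c}$ as $\L\uparrow\Z^2$. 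This boundary-independence is exactly the kind of statement that the pressure/free-energy machinery of Section \ref{Section_2parModel} should deliver: if the pressure is differentiable at the relevant $(\lambda,\mu)$ and independent of boundary conditions, then all infinite-volume limits coincide, giving $|\G(p)|=1$ and simultaneously $\G=\G_s$ since the unique measure must then inherit the translation invariance forced by the periodic boundary construction.

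For the last assertion, that any $Q\in\LL(p)$ is a convex combination $\alpha P+(1-\alpha)\delta_{0^\EE}$, I would proceed as follows. By Theorem \ref{thegeointro} applied in the stationary case and an analogous (non-stationary) argument at the level of $\LL(p)$, any accumulation point concentrates on $\A\cup\{0^\EE\}$; write $\alpha=Q(\A)$. On the event $\A$ the conditional measure $Q(\cdot\mid\A)$ solves the DLR equations \eqref{DLRintro}, hence lies in $\G(p)$, which by the previous paragraph equals $\{P\}$. On the complementary event $Q$ is concentrated on the single configuration $0^\EE$. Splitting $Q=\alpha\,Q(\cdot\mid\A)+(1-\alpha)\,Q(\cdot\mid\{0^\EE\})$ then yields the claimed form. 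The one delicate point here is justifying that $Q(\cdot\mid\A)\in\G(p)$ even when $Q$ is not stationary and even when $\alpha$ could be small; this requires checking that the DLR equations pass to the conditioned measure, which follows because $\A$ is a tail-measurable (indeed global-connectivity) event and the finite-volume specification preserves the single-cluster constraint.

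I expect the main obstacle to be precisely the uniqueness step $|\G(p)|=1$, because the model lacks FKG/monotonicity, so the standard Holley-type coupling or ``maximal/minimal measure'' sandwiching used for the random-cluster model is unavailable. The whole point of restricting to $d=2$ and $p\ge 1/2$ is that planar duality furnishes a substitute: the dual subcriticality should give exponential decay of dual connectivities, which in turn controls the influence of the boundary condition and forces boundary-independence of the limit. Making this quantitative---turning ``dual parameter $\le 1/2$'' into a genuine boundary-insensitivity estimate despite the non-local connectivity constraint---is the heart of the matter, and I anticipate it will lean heavily on the duality-specific arguments and the pressure analysis developed earlier in the paper rather than on any soft compactness or monotonicity.
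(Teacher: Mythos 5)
There is a genuine gap at the central step, the uniqueness $\G(p)=\{P\}$. Neither of the two mechanisms you invoke delivers it. First, boundary-condition independence of the pressure (Proposition \ref{propexistencepression}) and even differentiability of $(\lambda,\mu)\mapsto\PP(\lambda,\mu)$ do \emph{not} imply that all infinite-volume limits coincide: the low-temperature 2D Ising model has a pressure independent of boundary conditions and smooth in $\beta$, yet two distinct phases. Differentiability at best identifies the expected edge density across translation-invariant Gibbs measures; it says nothing about uniqueness of the measure, let alone among non-stationary elements of $\G(p)$, which the theorem also covers. Moreover the paper never establishes differentiability at $(\log(p/(1-p)),0)$, and neither do you. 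Second, the duality step is not made precise and would not come for free: the planar dual of the constraint $\{N_{cc}=1\}$ is not subcritical Bernoulli percolation (connectivity of the primal is dual to an acyclicity-type constraint, and the conditioning destroys the product structure), and precisely because FKG fails here you have no comparison inequality letting you dominate dual connectivities by an i.i.d. field so as to extract exponential decay. You correctly identify this as ``the heart of the matter,'' but the heart is missing. A secondary gap: your decomposition of a general $Q\in\LL(p)$ uses that $Q(\A\cup\{0^\EE\})=1$ for \emph{non-stationary} $Q$; the paper explicitly states (Remark after Theorem \ref{TheoFCBMconnected}) that it does not know how to prove this without stationarity, since Burton--Keane needs it, so the ``analogous non-stationary argument'' you defer to is not available.

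For comparison, the paper's actual mechanism uses $p\ge 1/2$ on the primal side, not the dual side. Lemma \ref{probaminore} shows that for any $P\in\G(p)\cup\LL(p)$, an unexplored edge adjacent to an already revealed open edge is open with conditional probability at least $p$. One then runs a clockwise exploration from the boundary of a large box $\Delta$, coupling realizations $X\sim P$ and $Y\sim Q$ with a single i.i.d.\ Bernoulli$(p)$ field $Z$ so that whenever the exploration meets an open cluster of $Z$ it only traces its exterior boundary, and the interior of that cluster is resampled identically for $X$ and $Y$. Since at $p\ge 1/2=p_c(2)$ an open $Z$-circuit surrounding $\L$ in the annulus $\Delta\setminus\L$ occurs with probability tending to one (RSW at criticality), this circuit shields $\L$ from the boundary condition and forces $X=Y$ on $\L$ unless one of them is void there; this simultaneously yields uniqueness in $\G(p)$, the identity $\G(p)=\G_s(p)$, and the convex decomposition of any $Q\in\LL(p)$ without ever needing a non-stationary Burton--Keane statement. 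If you want to salvage your write-up, this disagreement-percolation coupling is the ingredient to supply in place of the pressure/duality heuristics.
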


Note that the theorem holds for $p=1/2$ corresponding to the critical setting in dimension $d=2$. So, as for the incipient cluster \cite{IncipientKesten}, the forced unbounded connected component process is unique in distribution at criticality. In dimension $d\ge 3$ the theorem remains true for $p$ large enough but our proof would be valid only for $p$ strictly larger than $p_c(d)$. We omit to give it. We believe that mainly $\alpha=1$ in Theorem \ref{theouniciteintro} but we did not succeed to remove the possible vacuum part in general. However the vacuum part could exist for $p=p^*(d)$ corresponding to a possible liquid-gas phase transition.

\subsection{Main tools and ideas}\label{Section_Tools}

Let us present the main tools and ideas we use to prove theorems presented above. Theorem \ref{thegeointro} is based on two steps. First we show that the connected components of any accumulation point $P\in\LL(p)$ are necessary unbounded. Then a general Burton and Keane strategy ensures that the number of unbounded connected components is equal to zero or one. 

The existence of the threshold in Theorem \ref{theothtesholdintro} is more delicate. Recall that this model does not exhibit any natural monotonicity with respect to $p$. Actually Theorem \ref{theothtesholdintro} is based on a precise analysis of the function  $\PP: (0,1)\to\R$ defined by

$$ \PP(p)=\lim_{n\to \infty} \frac{1}{\#(\EE_n)} \log(\P_p^{\L_n}(\A)).$$
  
Excepted scaling constants, this function is called pressure by the statistical physics community. It is common to identify phase transition phenomenon by detecting lost of analyticity of $\PP$ for special critical values of $p\in(0,1)$. We use this strategy here. In fact $\PP$ is larger or equal to $-\log(2)$ and we show that  that the expected threshold $p^*(d)$ is defined by

\begin{equation}\label{defthresholintro}
p^*(d)= \inf(p\in(0,1), \PP(p)>-\log(2)).
  \end{equation}
  
  Combining different tools from statistical physics we show Theorem \ref{theothtesholdintro}.  

Theorem \ref{theouniciteintro} on the uniqueness of accumulation points or Gibbs measures is obtained by a coupling algorithm. This result is strongly inspired by the disagreement percolation argument to prove uniqueness of Gibbs measures \cite{VDBM}.

  The main original part of our work involves  Theorem \ref{theoboundsintro}. First, using simple combinatorial arguments it is possible to obtain trivial bounds for $ p^*(d)$. However they are not sharp as they are in Theorem \ref{theoboundsintro}. Our strategy is to merge the model with one parameter $p\in(0,1)$ inside a model with two parameters $(\lambda,\mu)\in\R^2$. The finite volume definition on $\Lambda\subset \Z^d$ is given by the following distribution

\begin{equation}\label{defQnintro}
Q^\L_{\lambda,\mu}(\o_\L):=\frac{1}{Z_\L(\lambda,\mu)} \1_{\A}(\o_\L) e^{\lambda N_\L(\o_\L)} e^{\mu \partial N_\L(\o_\L)},\quad \o_\L\in\O_\L,
\end{equation}

where $Z_\L(\lambda,\mu)$ is the normalising constant,  $ N_\L(\o_\L)$ is the  number of open edges in $\o_\L$ and $\partial N_\L(\o_\L)$ is the number of closed edges such that at least one of its extremities belongs to an open edge of $\o_\L$. The quantity  $ N(\o_\L)$ can be viewed as the size of the open cluster whereas $\partial N(\o_\L)$ is the perimeter of the open cluster. It is easy to see that $\P_p^\L(.|\A)$ corresponds to $Q^\L_{\lambda,\mu}$ with $\lambda=\log(p/(1-p))$ and $\mu=0$ (modulo a boundary effect). We show that for any $\mu\in\R$ there exists a critical parameter $\lambda^*(\mu)\in\R$ as in Theorem \ref{theothtesholdintro} and therefore

$$ p^*(d)= \frac{e^{\lambda^*(0)}}{1+e^{\lambda^*(0)}},$$

which explains the special form of bounds in \eqref{boundsintro}. The interest of this two-parameter model comes from its partial tractability since for $\mu\le \log(1-p_c(d))$ we have explicitly 

$$ \lambda^*(\mu)=\log(1-e^\mu).$$

It is exactly at this point that the standard percolation threshold $p_c(d)$ appears. Now exploiting the convexity of the pressure of the two-parameter model and some estimates on its derivatives we obtain sharp bounds for $\lambda^*(0)$ and the proof of Theorem \ref{theoboundsintro} follows.

\section{The two-parameter model}\label{Section_2parModel}

\subsection{Connected components and boundary conditions}
%
%

For a configuration $\o\in\O$, we denote by $N_{cc}(\o)$ the number of connected components of open edges. Before defining the number of connected components for an element $\o_\L\in\O_\L$, we introduce the notion of boundary condition in a general matter. We denote by $\partial \Lambda$ the boundary of $\Lambda$ (i.e. any vertex $i\in\Lambda$ such that $i$ has a neighbour in $\Lambda^c$).

\begin{defn}\label{definition_boundary}
For any bounded subset $\Lambda$, a boundary condition of $\L$ (noted "$bc(\L)$"  or simply "bc" if no  ambiguity holds) is any partition of 
$\partial \Lambda$ where each set of the partition is marked by $0$ or $1$; i.e. any collection $((E_1,\delta_1),(E_2,\delta_2),\ldots,(E_k,\delta_k))$ such that $(E_1,E_2,\ldots,E_k)$ is a partition of $\partial \Lambda$ and $\delta_i\in\{0,1\}$, $1\le i\le k$. A marked set $(E_i,\delta_i)$ corresponds to a collection of vertices which are identified to be a single point (by considering for instance the quotient space) which is closed or open depending on the value of $\delta_i$ (as usual, $1=$"open" and $0=$"closed").

The free boundary condition is a partition with closed singletons; i.e. $((\{i\},0), i\in\partial \L)$. The wired boundary condition is a partition with only one open set $\partial \Lambda$; i.e. $((\partial \Lambda,1))$. The left-right crossing boundary condition is a partition with two open sets (the right and left faces of a cube $\L$), the other sets are closed singleton. The periodic condition is a partition with closed pairs of opposite vertices at the boundary. Several other combinations are possible as "closed wired", "open periodic", etc...
%
\end{defn}

Then for any $\o_\L$ in $\O_\L$, we denote by $N_{cc}^{bc(\L)}(\o_{\L})$ the number of connected components in $\o_{\L}$ taking account the boundary condition "$bc(\L)$". To avoid any confusion, it is defined as the number of connected components in the following graph:

\begin{itemize}
\item  the vertices are the points in $\L\backslash \partial \L$, which belongs to an open edge in $\o_\L$, and any set $E$ of the partition $bc(\L)$ which is either open or if at least one vertex of $E$ belongs to an open edge in $\o_\L$.
\item the edges are induced by open edges in $\o_\L$. 
\end{itemize}

 A natural way for producing a boundary condition of $\L$ is to use the open edges of a configuration $\o$ outside $\L$. Precisely, for any $\o\in\O$ and $\L\subset \Z^d$, we denote by $bc(\L,\o)$ the following boundary condition : the points at the boundary are in the same set $E$ of the partition if they are connected by a path of open edges in $\o$ not belonging in $\EE_\L$ (they are connected from outside of $\L$). Each set $E$ of the partition is declared open if at least one vertex of $E$ belongs to an open edge in $\o$ not belonging in $\EE_\L$ (it is open from outside of $\L$).

Recall that the space of allowed configurations in $\O$ is $\A=\{\Ncc=1\}$. Similarly $\A^{bc(\L)}$ is the space of configurations $\{N_{cc}^{bc(\L)}=1\}$ in $\O_\L$ for the boundary condition $bc(\L)$.

In the following we often use the bounded box  $\L_n=\{-n,\ldots, n\}^d\subset\Z^d$ for $n\ge 1$. In this case, we use the notations $\O_n$, $\EE_n$, $N_n$, $bc(n)$, $N^{bc(n)}_{cc}(\o_{\L_n})$ and $\A^{bc(n)}$ in place of $\O_{\L_n}$, $N_{\L_n}$,  $\EE_{\L_n}$, $bc(\L_n)$, $N^{bc(\L_n)}_{cc}(\o_{\L_n})$ and $\A^{bc(\L_n)}$.

\subsection{Finite volume models}
%
%

 For any $p\in (0,1)$ and $\Lambda\subset \Z^d$, recall that $\P_p^\L$ denotes the probability measure $B(p)^{\otimes \EE_\L}$ on $\O_\L$ and so for every $\o_\L$

\begin{equation}\label{Bernoulli}
\P^\L_p(\o_\L) =c_\L \left(\frac{p}{1-p}\right)^{N_\L(\o_\L)} =c_\L e^{\lambda N_\L(\o_\L)},
\end{equation} 
where $c_\L:=1/(1-p)^{\#\EE_\L}$ is the normalization constant,   $N_\L(\o_\L)$ is the number of open edges in $\o_\L$ and $\lambda$ is the parameter $\log(p/(1-p))$.

Before introducing the finite volume model, we define a last quantity providing the number of closed edges with open neighbour. For any $\L\subset \Z^d$, any $\o_\L$ in $\O_\L$ and any boundary condition $\bc(\L)$ we denote by $\partial N_\L^{\bc(\L)}(\o_\L)$ (or simply $\partial N_\L^{\bc}(\o_\L)$ to avoid repetition) the number of closed edges in $\o_\L$  such that at least one of its extremities belongs to an open edge of $\o_\L$ or an open set $(E,1)$ in the boundary condition $\bc(\L)$.

\begin{defn}\label{defintionQ} Let $\L$ be a connected bounded subset of $\Z^d$, "$\bc(\L)$" be a boundary condition and $\lambda,\mu$ be two real numbers in $\R$. The fully-connected bond measure on $\L$ with parameter $(\lambda,\mu)$ and boundary condition "$bc(\L)$" is  the probability measure on $\O_\L$ defined by
\begin{equation}\label{defQn}
Q_{\L,\lambda,\mu}^{\bc}(\o_\L):=\frac{1}{Z_\L^{\bc}(\lambda,\mu)} \1_{\left\{\Ncc^{\bc(\L)}(\omega_\L)=1\right\}} e^{\lambda N_\L(\o_\L)} e^{\mu \partial N^\bc_\L(\o_\L)},
\end{equation}
where $Z_\L^{\bc}(\lambda,\mu)$ is the normalization constant. Note that  $Z_\L^{\bc}(\lambda,\mu)\ge e^{\lambda\#\EE_\L}>0$ since the configuration  with all open edges is allowed.
\end{defn}

With a good choice of parameters we identify two main models.

{\bf -Example 1 (infinite cluster of Bernoulli bond percolation):  }
For any $p\in (0,1)$, we fix $\lambda=\log(p)$ and $\mu=\log(1-p)$. By a simple identification we see that the weight 

$$ \1_{\left\{\Ncc^{\text{wired}}(\omega_\L)=1\right\}} p^{N_\L(\o_\L)}(1-p)^{\partial N^\text{wired}_\L(\o_\L)}$$

of $Q_{\L,\lambda,\mu}^{\text{wired}}$ is exactly the probability that the open edges of $\o_\L$ corresponds to open clusters hitting the boundary of $\L$ for a Bernoulli bond percolation $\P_p^\L$. In particular $Z_\L^{\text{wired}}$ is a sum of probability weights and is equal to one. In the thermodynamic limit 
(i.e. $\L\to \Z^d$), $Q_{\L,\lambda,\mu}^{\text{wired}}$ converges to the distribution of the infinite cluster in $\P_p$ if it exists. If it does not exist $Q_{\L,\lambda,\mu}^{\text{wired}}$ converges to the null configuration $0^\EE$ (all edges are closed). See Proposition \ref{propbernoulli} for details..

{\bf  -Example 2 (Fully connected bond percolation model):  }
For any $p\in (0,1)$, we fix $\lambda=\log(p/(1-p))$ and $\mu=0$. Then the probability measure $Q_{\L,\lambda,0}^{\text{bc}}$ is simply the distribution of a Bernoulli bond model with parameter $p$ conditionned to have a unique connected component (with respect to the boundary condition bc);

$$ Q_{\L,\lambda,0}^{\text{bc}} = \P^\L_p(.|\Ncc^{\text{bc}}=1).$$

In particular for $p=1/2$, $Q_{\L,0,0}^{\text{free}}$ samples randomly and uniformly a connected graph in $\L$. 
 
%
%

\subsection{Thermodynamic limits} 

Let us now turn to the main object of the present paper; any thermodynamic limit of finite volume fully connected bond models. For simplicity, in the following the thermodynamic limits are always along the sequence of boxes $(\L_n)_{n\ge 1}$ and the boundary condition "\bc" denotes in fact a sequence of boundary conditions "$(\bc(n))_{n\ge 1}$".  The probability measures $Q_{\L_n,\lambda,\mu}^{\bc(n)}$ is simply denoted by $Q_n^{bc}$.

\begin{defn} We denote by $\LL^{bc}(\lambda,\mu)$ the set of all accumulation points of  
$(Q_n^{bc})_{n\ge 1}$ (embedded in $\O$) for the weak convergence of measures. Any element in $\LL^{bc}(\lambda,\mu)$ is called a  fully-connected bond measure with parameters $(\lambda,\mu)$ and boundary condition "bc". $\LL(\lambda,\mu)$ is the union of all $\LL^{bc}(\lambda,\mu)$ for any choice of boundary condition "bc". We denote by  $\LL_s^{bc}(\lambda,\mu)$ (or  $\LL_s(\lambda,\mu)$) the elements of  $\LL^{bc}(\lambda,\mu)$ (or  $\LL(\lambda,\mu)$) which are stationary in space, meaning invariant in distribution with respect to any translation  $\tau_x$ by vector $x\in\Z^d$.
\end{defn}

Since $\O$ is compact the space  $\LL^{bc}(\lambda,\mu)$ is not empty for any $\lambda$, $\mu$ and sequence of boundary conditions "$(\bc(n))_{n\ge 1}$". The existence of elements in 
$\LL_s^{bc}(\lambda,\mu)$  for any "bc" is more delicate and discussed below. However for the periodic boundary condition "bc=per" the accumulations points are necessary stationary in space and so $\LL^{per}(\lambda,\mu)$ and $\LL_s(\lambda,\mu)$ are not empty. The following proposition provides a clear interpretation of the thermodynamic limits in the case of example 1 above.

We denote by $\P_p^\infty$ the probability on $\O$ such that the open edges are distributed as the unique infinite open cluster in the Benoulli percolation $\P_p$, if percolation occurs. If percolation does not occur, $\P_p^\infty$ is not defined. So $\P_p^\infty$ is defined for $p>p_c(d)$ and for $p=p_c(d)$ if the conjecture "$\theta(p_c)=0$" is not true. Note that $\P_p^\infty(\A)=1$.

\begin{prop}\label{propbernoulli}
For any $p\in (0,1)$, we fix $\lambda=\log(p)$ and $\mu=\log(1-p)$. 

\begin{itemize}
\item If $p>p_c(d)$, then $\LL^{\text{wired}}(\lambda,\mu)=\{ \P_p^\infty\}$ 
\item If $p<p_c(d)$, then $ \LL^{\text{wired}}(\lambda,\mu)=\{ \delta_{0^\EE}\}$.
\item If $p=p_c(d)$, it depends if the infinite cluster exists at criticality (conjecture "$\theta(p_c)=0$"). If it exists  then $\LL^{\text{wired}}(\lambda,\mu)=\{ \P_p^\infty\}$ otherwise  $\LL^{\text{wired}}(\lambda,\mu)=\{ \delta_{0^\EE}\}$.
\end{itemize}

\end{prop}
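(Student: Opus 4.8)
The plan is to exploit the exact finite-volume identification already noted above: with $\lambda=\log(p)$ and $\mu=\log(1-p)$, the weight of $Q_n^{\text{wired}}$ equals $\1_{\{\Ncc^{\text{wired}}=1\}}\,p^{N_{\L_n}(\o_{\L_n})}(1-p)^{\partial N^{\text{wired}}_{\L_n}(\o_{\L_n})}$, which is the $\P_p^{\L_n}$-probability that the open edges of $\o_{\L_n}$ are precisely those lying in open clusters meeting $\partial\L_n$; in particular $Z^{\text{wired}}_{\L_n}=1$, and $Q_n^{\text{wired}}$ is just the law, under Bernoulli percolation in $\L_n$, of the boundary-touching part of the configuration. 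First I would realise all these laws on one probability space by sampling a single field $\eta\sim\P_p$ on $\EE$ and using that its restriction to $\EE_{\L_n}$ has law $\P_p^{\L_n}$; then the random edge set $B_n(\eta):=\{e\in\EE_{\L_n}:\ e\text{ is open and the open cluster of }e\text{ inside }\L_n\text{ meets }\partial\L_n\}$ has law $Q_n^{\text{wired}}$ for every $n$.

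The heart of the proof is an almost sure local stabilisation of $B_n(\eta)$. Fixing an edge $e$, I would argue $\P_p$-a.s.\ as follows. If $e$ lies in a finite open cluster, then that cluster is eventually contained in the interior of $\L_n$; since the cluster of $e$ computed inside $\L_n$ is contained in the true cluster of $e$, it cannot meet $\partial\L_n$ for $n$ large, so $e\notin B_n(\eta)$ eventually. If $e$ lies in an infinite open cluster, then any open path from $e$ to infinity crosses $\partial\L_n$, and its initial segment up to the first boundary vertex stays inside $\L_n$, so the cluster of $e$ inside $\L_n$ meets $\partial\L_n$ and $e\in B_n(\eta)$ for $n$ large. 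Consequently, for every finite edge set $F$, the restriction $B_n(\eta)|_F$ stabilises a.s.\ to $C_\infty(\eta)|_F$, where $C_\infty(\eta)$ is the edge set of the infinite open cluster (taken empty when none exists). The one delicate point I foresee is the supercritical identification: to match this a.s.\ limit with $\P_p^\infty$ one needs the infinite open cluster to be a.s.\ unique (Burton--Keane), which is what makes $\P_p^\infty$ well defined and guarantees that the various boundary-touching clusters coalesce onto a single object rather than several competing unbounded ones.

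It then remains to upgrade a.s.\ local convergence to weak convergence of the laws. As $\O=\{0,1\}^\EE$ is compact, weak convergence amounts to convergence of the probabilities of all cylinder events; each such event depends on finitely many edges, so the a.s.\ stabilisation of $B_n(\eta)|_F$ combined with bounded convergence yields $Q_n^{\text{wired}}(D)\to\P_p^\infty(D)$ for every cylinder event $D$ when $p>p_c(d)$ (where $C_\infty$ exists and is a.s.\ unique), and $Q_n^{\text{wired}}(D)\to\delta_{0^\EE}(D)$ when $p<p_c(d)$ (all clusters finite a.s., hence $B_n\to\emptyset$ locally). Since the full sequence converges a.s., the set of accumulation points is a singleton in each regime, which gives the first two items. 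The critical case $p=p_c(d)$ is handled by the very same stabilisation argument, the outcome being $\{\P_p^\infty\}$ or $\{\delta_{0^\EE}\}$ according to whether $\theta(p_c)>0$ or $\theta(p_c)=0$, exactly the stated dichotomy.
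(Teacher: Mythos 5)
Your proposal is correct and follows essentially the same route as the paper: both identify $Q_n^{\text{wired}}$ as the law of the boundary-touching clusters of a Bernoulli field on $\L_n$, couple all $n$ through a single sample of $\P_p$, and conclude by almost sure local stabilisation to the infinite cluster (or to $0^\EE$). You merely spell out the stabilisation and the role of uniqueness of the infinite cluster, which the paper leaves as "clear".
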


\begin{proof}
The proof is based on the description of $Q_{\L_n,\lambda,\mu}^{\text{wired}}$ given in example 1 which is identified as the distribution of open clusters hitting the boundary of $\L_n$ for a Bernoulli bond percolation $\P_p^{\L_n}$. So the proof of the proposition is a simple consequence of the following coupling. For any $n\ge 1$ and $\o\in\O$ we consider the configuration $\o_n \in \O_{\L_n}$ whom open edges are the open clusters of $\o$ inside $\L_n$ hitting the boundary of $\L_n$. Therefore if $\o$ is distributed with respect $\P_p$ then $\o_n$ is distributed with respect to $Q_{\L_n,\lambda,\mu}^{\text{wired}}$. It is now clear that the local limit of $\o_n$,  when $n\to\infty$, is the infinite open cluster of $\o$ if it exists or the vacuum configuration $0^\EE$ if it does not exist. We deduce that the weak limit of $Q_{\L_n,\lambda,\mu}^{\text{wired}}$ when $n\to\infty$ is the distribution $\P_p^\infty$ if percolation occurs or $\delta_{0^\EE}$ if it does not.

\end{proof}

%
%
%
%
%
%

\subsection{Connectivity properties}

In this section we investigate the topological properties of elements $P$ in $\LL(\lambda,\mu)$.

\begin{prop}\label{Prop_notbounded} For any $P$ in $\LL(\lambda,\mu)$, $P$-almost surely the connected components of open edges are unbounded.
\end{prop}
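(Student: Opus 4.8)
\emph{Plan.} The statement is equivalent to saying that under any $P\in\LL(\lambda,\mu)$ there is, $P$-almost surely, no finite (bounded) open component, so I would argue by contradiction and rule out finite components directly. For a finite connected set of edges $C\subset\EE$ let $E_C$ be the cylinder event that every edge of $C$ is open while every edge sharing a vertex with $C$ but not lying in $C$ is closed; on $E_C$ the set $C$ is exactly a maximal finite open cluster. Since there are only countably many finite connected $C$ and $\{\exists\text{ bounded component}\}=\bigcup_C E_C$, it suffices to prove $P(E_C)=0$ for each such $C$. As $E_C$ is a cylinder event and $P$ is an accumulation point of some subsequence of $(Q_n^{\bc})$, weak convergence gives $P(E_C)=\lim_k Q_{n_k}^{\bc}(E_C)$, so the whole problem reduces to showing $Q_n^{\bc}(E_C)\to 0$.

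Next I would analyse $Q_n^{\bc}(E_C)$ for $n$ so large that $C$ lies deep inside $\L_n$, i.e. $V(C)\subset\L_n\setminus\partial\L_n$; then the single-component constraint $\Ncc^{\bc}=1$ does all the work. On $E_C$ the cluster $C$ is isolated and does not meet the boundary, hence it is already one full connected component. Two cases arise. If the boundary condition $\bc$ carries at least one marked-open set, that set is a second component disjoint from $C$, so $\Ncc^{\bc}\ge 2$ on $E_C$ and $Q_n^{\bc}(E_C)=0$ outright. If every marked set of $\bc$ is closed, then $\Ncc^{\bc}=1$ forces every edge outside $C$ to be closed (any other open edge would create a second cluster), so $E_C\cap\{\Ncc^{\bc}=1\}$ consists of the single configuration ``$C$ open, everything else closed'', of weight $K_C:=e^{\lambda\#C}e^{\mu\,\partial N}$, a constant depending only on $C$. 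Hence $Q_n^{\bc}(E_C)\le K_C/Z_n^{\bc}$.

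It remains to handle this second case, and this is the crux: I must show $Z_n^{\bc}\to\infty$, which is exactly where the absence of monotonicity and the possibility $\lambda\le 0$ make the fully-open configuration useless as a lower bound (its weight $e^{\lambda\#\EE_n}$ vanishes when $\lambda<0$). Instead I would lower-bound $Z_n^{\bc}$ by a purely entropic family: since every marked set is closed, each single open interior edge (both endpoints in $\L_n\setminus\partial\L_n$, every other edge closed) is an allowed configuration with $\Ncc^{\bc}=1$; it has $N=1$ and $\partial N\le 4d-2$, hence weight at least $e^{\lambda}e^{-|\mu|(4d-2)}>0$, a positive constant independent of $n$. There are of order $\#\EE_n$ such configurations, so $Z_n^{\bc}\ge c\,\#\EE_n\to\infty$, giving $Q_n^{\bc}(E_C)\to 0$. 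Combined with the first case we obtain $Q_n^{\bc}(E_C)\to 0$ in all cases, whence $P(E_C)=0$ for every finite connected $C$ and the proposition follows. The main obstacle is precisely this lower bound on the partition function: because no FKG/monotonicity is available and $\lambda$ may be negative, it must be produced by counting many cheap connected configurations rather than by the maximal one, and the boundary-condition dichotomy is what guarantees these cheap configurations are admissible exactly when they are needed.
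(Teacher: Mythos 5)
Your proof is correct, and it takes a genuinely different route from the paper's. The paper argues in two steps on the limit measure $P$: it first shows, by exhibiting a local event that has $Q_n^{\bc}$-probability zero (a component contained in $\L$ coexisting with another component hitting $\L$) and passing to the weak limit, that a bounded component can only occur if the whole configuration reduces to that single component; it then excludes a unique bounded component by a translation--pigeonhole argument — with a fully closed boundary condition the events ``the unique component sits in $\L^i$'' for $k$ disjoint interior translates $\L^i$ of $\L$ are disjoint and each has $Q_n^{\bc}$-probability $>\delta$, and choosing $k\delta>1$ gives a contradiction. You instead reduce everything to the cylinder events $E_C$, observe that under the single-component constraint $E_C$ either has $Q_n^{\bc}$-probability zero (some boundary set marked open) or is realized by exactly one configuration of weight $K_C$ (all boundary sets closed), and then replace the pigeonhole by an explicit entropic lower bound $Z_n^{\bc}\ge c\,\#\EE_{n-1}$ coming from the single-interior-edge configurations. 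The two arguments exploit the same underlying fact — there are many cheap admissible configurations compared with any fixed finite cluster — but yours is more quantitative, yielding the rate $Q_n^{\bc}(E_C)=O(1/\#\EE_n)$, whereas the paper's pigeonhole avoids any lower bound on the partition function. Your dichotomy on the boundary condition is handled correctly: the single-edge configurations satisfy $\Ncc^{\bc}=1$ exactly when all marked sets are closed, which is precisely the case in which the lower bound on $Z_n^{\bc}$ is needed.
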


\begin{proof}
Let $P$ in $\LL(\lambda,\mu)$. First we show that for $P$-almost every $\omega$, if $\omega$ contains a bounded connected component then $\omega$ is reduced to this bounded component (in a second step we show that it is impossible). We make a proof by contradiction by assuming the opposite: with positive probability there exist a bounded connected component and another connected component (bounded or not). So we can find a bounded set $\L\subset \Z^d$ large enough such that
$$P\left(\begin{array}{l}
\text{-There exists a connected component completly incuded in } \L\\
\text{-There exists another connected component hitting } \L.\end{array}
\right)>0.$$ 
But this event is local with probability zero for any $Q^{bc}_n$. That implies a contradiction by weak convergence of  $(Q^{bc}_n)$ to $P$ (for a subsequence).

It remains to prove that a unique bounded connected component can not occur with positive probability. We make again a proof by contradiction. Assume that there exists a bounded set $\L\subset \Z^d$ such that 
$$P\left(\begin{array}{ll}
\text{There exists a unique bounded connected component} \\
\text{and it is incuded in } \L
\end{array}
\right)>0.$$
  By the weak convergence of $(Q^{bc}_n)$ to $P$ (for a subsequence) and the previous result, we deduce that there exist $\delta>0$ and $n_0\ge 1$ such that for any $n\ge n_0$
$$Q_n^{\bc}(\text{There exists a unique bounded connected component and it is incuded in } \L)>\delta.$$
If the boundary condition has an open vertex, this implies a contradiction for $n$ large enough. If the boundary condition is completely closed, we choose an integer $k$ large enough such that $k\delta>1$. We choose also $n$ large enough such that $\L_n$ contains $k$ disjoint copies $(\L^i)_{1\le i\le k}$ obtained by translations of  $\L$. So by definition of $Q_n^{\bc}$, we obtain for $1\le i\le k$, 
$$Q_n^{\bc} \left(\begin{array}{ll}
\text{There exists a unique bounded connected component } \\
\text{ and it is incuded in } \L^i
\end{array}\right)>\delta.$$
But these $k$ events are disjoint and $k\delta>1$. It is impossible.
\end{proof}

Now we investigate the number of unbounded connected components. To this end, a precious tool is the finite energy property and the  general Burton and Kean argument. The model here does not satisfy exactly the finite energy property but only the following variant.

\begin{lemm}\label{localmodif}
Let $\L$ be a bounded connected set in $\Z^d$. Let $P\in\LL(\lambda,\mu)$ and let $A$ be an event in $\O$ measurable with respect to the sigma-field generated by events $(\{e \text{ is open}\})_{e\in  \EE^c_{\L}}$. We assume that

$$ P(A\cap\{\text{an edge between a vertice in } \L \text{ and } \L^c \text{ is open}\})>0.$$  

Then 
$$P (A\cap\{\text{ all edges in } \EE_\L \text{  are open}\})>0.$$
\end{lemm}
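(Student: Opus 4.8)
The plan is to reduce the statement to a uniform lower bound for the finite-volume measures $Q_n^{\bc}$ and then pass to the weak limit. Write $D=\{\text{an edge between a vertex in }\L\text{ and }\L^c\text{ is open}\}$ and note that $D$, like $A$, is measurable with respect to the edges of $\EE^c_\L$, since a crossing edge has one endpoint in $\L^c$ and hence lies in $\EE^c_\L$. Fix a subsequence with $Q_{n_k}^{\bc}\to P$ and take $n$ large enough that $\L$ and all edges incident to $\L$ lie inside $\L_n$. The core of the argument is the finite-volume inequality
\begin{equation}
Q_n^{\bc}(A'\cap\{\text{all edges of }\EE_\L\text{ are open}\})\ \ge\ c\,Q_n^{\bc}(A'\cap D),\tag{$\star$}
\end{equation}
valid for every event $A'$ measurable with respect to $\EE^c_\L$, with a constant $c=c(\lambda,\mu,\L)>0$ independent of $n$ and of $A'$.

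To obtain $(\star)$ I would condition on the configuration $\o'$ outside $\EE_\L$: by Definition \ref{defintionQ}, on each fibre the inside configuration has probability proportional to $e^{\lambda N_\L}e^{\mu\partial N^{\bc}_\L}$ among the configurations with $\Ncc^{\bc(n)}=1$. On a fibre of positive mass there is at least one admissible inside configuration $\eta$. The key claim is that \emph{if moreover $D$ holds on $\o'$ then the all-open inside configuration is again admissible}. This is the only delicate point, and it is exactly where the open crossing edge is used: opening every edge of $\EE_\L$ merges all of $\L$ into a single block, which a priori could form a second connected component; but the open crossing edge forces some vertex of $\L$ to belong already to the unique component of $\o'\eta$ (the crossing edge itself is an open edge of that component), so the whole block, together with the finitely many extra open edges of $\EE_\L$, is absorbed into that component and $\Ncc$ stays equal to $1$. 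Once admissibility is granted, the all-open configuration and any competing admissible inside configuration differ only on $\EE_\L$ and on the finitely many boundary edges affecting $\partial N$, so their weights differ by a factor bounded in terms of $\lambda,\mu,\L$ alone; since there are at most $2^{\#\EE_\L}$ inside configurations, the conditional probability of the all-open configuration is at least some $c(\lambda,\mu,\L)>0$ on every positive-mass fibre on which $D$ holds. Integrating this bound over $A'\cap D$ yields $(\star)$.

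Finally I would pass to the limit. For a cylinder event $A_0$ depending on finitely many edges of $\EE^c_\L$, the events $A_0\cap\{\text{all }\EE_\L\text{ open}\}$ and $A_0\cap D$ are clopen, so their $Q_{n_k}^{\bc}$-probabilities converge to their $P$-probabilities and $(\star)$ gives $P(A_0\cap\{\text{all open}\})\ge c\,P(A_0\cap D)$. To handle the general, possibly non-local event $A$, I approximate it by cylinders: choosing a cylinder $A_0$ measurable with respect to $\EE^c_\L$ with $P(A\triangle A_0)<\epsilon$ yields $P(A_0\cap D)\ge P(A\cap D)-\epsilon$ and, since $P((A_0\setminus A)\cap\{\text{all open}\})\le\epsilon$, also $P(A\cap\{\text{all open}\})\ge P(A_0\cap\{\text{all open}\})-\epsilon$. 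Combining these gives $P(A\cap\{\text{all open}\})\ge c\,(P(A\cap D)-\epsilon)-\epsilon$, and letting $\epsilon\to0$ produces $P(A\cap\{\text{all edges of }\EE_\L\text{ are open}\})\ge c\,P(A\cap D)>0$, which is the assertion. The main obstacle is the admissibility step of the second paragraph, namely verifying that opening all of $\EE_\L$ does not split the single connected component; it is precisely here that the hypothesis of an open crossing edge is indispensable, and it is what distinguishes this statement from the usual finite-energy property.
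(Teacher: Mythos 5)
Your proof is correct and follows essentially the same route as the paper: the heart of both arguments is the uniform finite-volume comparison showing that, conditionally on the outside configuration and on the presence of an open crossing edge (which guarantees admissibility of the all-open filling), the all-open configuration in $\EE_\L$ carries a weight at least $c(\lambda,\mu,\L)>0$ times that of any competing admissible configuration. The only cosmetic difference is how the non-local event $A$ is localized before passing to the weak limit — you approximate $A$ by cylinder events, while the paper uses the martingale convergence of $E(\1_A\,|\,\mathcal{F}^\L_\Delta)$ — and both devices work equally well here.
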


\begin{proof}
Let $P$ be in $\LL(\lambda,\mu)$. For sake a simplicity, we note $P=\lim_{n\to \infty} Q_n^{bc}$ in omitting the limit under a subsequence.

By the martingale convergence Theorem, the indicator function $\1_A$ can be approximated by local bounded functions which are measurable with respect to the sigma-field generated on  $\EE^c_{\L}$. Indeed, $P$ almost surely $\1_A=\lim_{\Delta\to \Z^d} E(\1_A |\mathcal{F}^\L_\Delta)$ where $\mathcal{F}^\L_\Delta$ is the sigma-field generated on  $\EE_\Delta\backslash \EE_{\L}$. Then

\begin{eqnarray*}
& & P (A\cap\{\text{ all edges in } \EE_\L \text{  are open}\})\\
&=& \lim_{\Delta\to \Z^d} E_P\left( E(\1_A |\mathcal{F}^\L_\Delta) \1_{\{\text{ all edges in } \EE_\L \text{  are open}\}}\right)\\
&=& \lim_{\Delta\to \Z^d} \lim_{n\to\infty} E_{Q_n^{bc}}\left( E(\1_A |\mathcal{F}^\L_\Delta) \1_{\{\text{ all edges in } \EE_\L \text{  are open}\}}\right)\\
&\ge & \lim_{\Delta\to \Z^d} \lim_{n\to\infty} E_{Q_n^{bc}}\left( E(\1_A |\mathcal{F}^\L_\Delta) \1_{\{\text{ all edges in } \EE_\L \text{  are open}\}} \right.\\
& &\left. \1_{\{ \text{an edge between a vertice in } \L \text{ and } \L^c \text{ is open} \}}\right).
\end{eqnarray*}

Given that $ \text{an edge between a vertice in } \L \text{ and } \L^c \text{ is open} $, the weight under  $Q_n^{bc}$ of configurations  $\{\text{ all edges in } \EE_\L \text{  are open}\}$ is positive. Moreover there exists a constant $c>0$ such that this weight is larger than $c$ times the weight of all other allowed configurations. Therefore

\begin{eqnarray*}
& & P (A\cap\{\text{ all edges in } \EE_\L \text{  are open}\})\\
&\ge & c \lim_{\Delta\to \Z^d} \lim_{n\to\infty} E_{Q_n^{bc}}\left( E(\1_A |\mathcal{F}^\L_\Delta) \1_{\{ \text{an edge between a vertice in } \L \text{ and } \L^c \text{ is open} \}}\right)\\
&= & c \lim_{\Delta\to \Z^d}  E_{P}\left( E(\1_A |\mathcal{F}^\L_\Delta) \1_{\{ \text{an edge between a vertice in } \L \text{ and } \L^c \text{ is open} \}}\right)\\
&= & c P(A\cap\{ \text{an edge between a vertice in } \L \text{ and } \L^c \text{ is open} \})>0.
\end{eqnarray*}

\end{proof}

Now we obtain the following theorem which claims that in the stationary case, the number of connected components (necessary unbounded) are equal to zero or one. If it is zero then the fully-connected bond measure is the vacuum state (i.e. charging the null configuration $0^\EE$).

\begin{theo}\label{TheoFCBMconnected}
Let $P$ be in $\LL_s(\lambda,\mu)$ then
$$ P(\Ncc=0 \text{ or } 1)=1.$$
\end{theo}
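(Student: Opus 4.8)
<br />

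The goal is to prove that for stationary $P \in \LL_s(\lambda,\mu)$, the number of connected components is 0 or 1 almost surely. We already know from Proposition \ref{Prop_notbounded} that all connected components are unbounded $P$-a.s., so it suffices to rule out the existence of two or more unbounded components. This is the classical Burton–Keane uniqueness argument, and the plan is to adapt it to our setting where only the weak "local modification" property of Lemma \ref{localmodif} is available instead of full finite energy.

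\textbf{Setup and strategy.} Let me denote by $\Ncc$ the (random) number of unbounded connected components. By stationarity and ergodicity considerations, $\Ncc$ is a translation-invariant quantity, so it is $P$-a.s. constant (or at least takes a deterministic value after passing to an ergodic component; Burton–Keane works directly with the a.s. constant value of the number of infinite clusters for translation-invariant measures). First I would argue that $\Ncc \in \{0,1,\infty\}$ almost surely. The standard argument: if $P(\Ncc = k) > 0$ for some finite $k \ge 2$, choose a box $\L$ large enough that with positive probability $\L$ intersects all $k$ unbounded components. Then using Lemma \ref{localmodif} to force all edges in $\EE_\L$ open (conditionally on touching the boundary from outside), one merges the $k$ components into a strictly smaller number, producing a configuration with fewer than $k$ unbounded components on a set of positive probability. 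Since the number of unbounded components is a.s. constant, this is a contradiction. Hence $\Ncc \in \{0,1,\infty\}$.

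\textbf{Ruling out infinitely many clusters.} The heart of Burton–Keane is excluding $\Ncc = \infty$. The plan is to introduce the notion of a trifurcation point (or trifurcation box): a vertex $x$ (or a box around it) such that removing a neighborhood of $x$ splits one unbounded cluster into at least three unbounded pieces, all meeting the boundary of the box. If $\Ncc = \infty$ with positive probability, then one shows that with positive probability a given box is a trifurcation box — here is exactly where Lemma \ref{localmodif} enters again: one opens all edges inside a box to glue together three incoming unbounded arms, creating a trifurcation. By stationarity, the expected density of trifurcation points is bounded below by a positive constant $\rho > 0$. On the other hand, a purely combinatorial/topological counting argument shows that in any box $\L_n$ the number of trifurcation points is at most the number of boundary points, which grows like $n^{d-1}$, whereas $\rho$ times the volume grows like $n^d$. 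Comparing these two growth rates yields a contradiction for large $n$, so $\Ncc = \infty$ is impossible. Combining with the previous step gives $\Ncc \in \{0,1\}$. Finally, if $\Ncc = 0$ on a set of positive probability, stationarity forces the configuration to have no open edges at all on that event (since any open edge would belong to some component, necessarily unbounded), which identifies that part of the measure with the vacuum state $\delta_{0^\EE}$; together this gives $P(\Ncc = 0 \text{ or } 1) = 1$.

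\textbf{Main obstacle.} The delicate point is that we do not have the genuine finite-energy property but only Lemma \ref{localmodif}, which provides positive probability of opening all edges in $\EE_\L$ \emph{conditioned on an external event $A$ that already produces an open edge crossing $\partial\L$}. So each use of the merging/trifurcation construction must be phrased so that the relevant event $A$ (measurable outside $\L$, guaranteeing the incoming unbounded arms touch $\L$) has positive probability, and the hypothesis of Lemma \ref{localmodif} is genuinely met. I expect the careful bookkeeping here — verifying that the "several unbounded components reaching into $\L$" event can be taken measurable with respect to $\EE_\L^c$ and that opening $\EE_\L$ does create a trifurcation in the Burton–Keane sense — to be the main technical hurdle, rather than the counting estimate, which is standard. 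One must also confirm that the global constraint $\A$ (unique component) does not interfere: since we work with the limiting measure $P$ rather than finite-volume $Q_n^{bc}$, the constraint has already relaxed into the "all components unbounded" statement, so the Burton–Keane machinery applies to $P$ directly.
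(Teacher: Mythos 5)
Your proposal follows essentially the same route as the paper, which itself only sketches this proof: reduce to an a.s.\ constant number of (necessarily unbounded, by Proposition \ref{Prop_notbounded}) components via stationarity/ergodic decomposition, exclude finite values $\ge 2$ by the local modification Lemma \ref{localmodif}, and exclude $\infty$ by the Burton--Keane trifurcation counting argument. Your additional attention to the fact that Lemma \ref{localmodif} is only a conditional finite-energy statement (requiring an open edge crossing $\partial\L$ from outside) is exactly the right point to be careful about, and is consistent with the paper's intended argument.
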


\begin{proof}

The proof follows the standard Burton and Keane arguments \cite{burkea} for which we give only a sketch of the proof. By shift invariance we show that each ergodic phase of $\tilde P$ (in the extremal decomposition of $P$) has $\tilde P$-almost surely a fix number of connected components in $\N\cup\{\infty\}$. By local modification (Lemma  \ref{localmodif}) this number can not be finite greater than two. We finish the proof by a trifurcation argument showing that this number can not be infinite.

\end{proof} 
\begin{rem} We do not know if Theorem  \ref{TheoFCBMconnected} is valid for $P\in  \LL(\lambda,\mu)$ without assuming that $P$ is stationary. We do not know also if such non-stationary probability measures exist.

\end{rem}

\subsection{DLR equations}

In this section we investigate the DLR equations for the two-parameter model. Let us first define the Gibbs kernels. For any configuration $\tilde \o\in\A$, any bounded $\L\subset \Z^d$ and any $(\lambda,\mu)\in\R^2$ we consider the probability measure on $\{0,1\}^{\EE_\L}$ defined by

$$ Q_\L^{\tilde \o}(\o_\L)=\frac{1}{Z_\L^{\tilde \o}}\1_{\A}(\tilde\o_{\EE\backslash \EE_\L}\o_\L) e^{\lambda N(\omega_\L)}e^{\mu\partial N_\L^{\tilde \o}(\omega_\L)},$$

where $\partial N_\L^{\tilde \o}(\omega_\L)$ is number of closed edges in $\tilde\o_{\EE\backslash \EE_\L}\o_\L$ with at least one of its extremities belonging to an open edge of $\tilde\o_{\EE\backslash \EE_\L}\o_\L$ and moreover this extremity (or the other one) belongs to $\L$. This special form for $\partial N_\L^{\tilde \o}(\omega_\L)$ allows to take account the full dependence of $\o_\L$ in the computation of  $\partial N^{\text{bc}}_{\Delta}(\tilde\o_{\EE_\Delta\backslash \EE_\L}\o_\L)$ with $\Delta$ larger than $\L$. Note that $\partial N_\L^{\tilde \o}$  depends only on $\tilde\o_{L^\oplus}$ with

$$ \L^\oplus=\{i\in \Z^d \text{ such that there exists j in } \L \text{ with } |i-j|\le 2 \}.$$

 Note also the difference between $\partial N_\L^{\tilde \o}(\omega_\L)$ and  $\partial N_\L^{\text{bc}(\tilde \o)}(\omega_\L)$ defined before Definition \ref{defintionQ}. As usual $Z_\L^{\tilde \o}$ is the normalization constant 

$$ Z_\L^{\tilde \o}=\sum_{\o_\L\in\EE_\L}  \1_{\A}(\tilde\o_{\EE\backslash \EE_\L}\o_\L) e^{\lambda N(\omega_\L)}e^{\mu\partial N_\L^{\tilde \o}(\omega_\L)}$$

which is positive since larger than $e^{\lambda N(\tilde \omega_\L)}e^{\mu\partial N_\L^{\tilde \o}(\tilde\omega_\L)}$. 
%
%

%
%

%
%

\begin{defn}\label{definitionDLR} A probability measure  $P$ on $\O$ is a fully-connected bond Gibbs measure with parameter $(\lambda,\mu)\in\R^2$ if $P(\A)=1$ and if for any bounded $\L\subset \Z^d$ and any bounded function $f$ from $\O$ to $\R$

$$\int f(\o)P(d\o)= \int f(\tilde \o_{\EE\backslash \EE_\L}\o_\L) Q_\L^{\tilde \omega}(d\omega_{\L})P(d\tilde \o).$$

We denote by $\G(\lambda,\mu)$ the set of such Gibbs measures and by $\G_s(\lambda,\mu)$ the elements of $\G(\lambda,\mu)$ which are stationary in space.
\end{defn}

To make a connection with the DLR equations \eqref{DLRintro} presented in the introduction, the definition \ref{definitionDLR} above is equivalent to assume that $P(\A)=1$  and

$$ P(.|\tilde \o_{\EE\backslash \EE_\L})= Q_\L^{\tilde \o},$$

for all bounded $\L$ and for $P$-almost all $\tilde \o$.

The next theorem shows how to construct Gibbs measures via accumulation points of $(Q_n^{\text{bc}})$.

\begin{theo}\label{TheoGibbs}
Let $P$ be in $\LL(\lambda,\mu)$ such that  $P(N_{cc}=1)>0$. Then $P(.|N_{cc}=1)$ belongs to $\G(\lambda,\mu)$. 
\end{theo}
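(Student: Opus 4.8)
The plan is to verify that $\tilde P:=P(\cdot\,|\,\{\Ncc=1\})$ satisfies the DLR equation of Definition \ref{definitionDLR}. Since $\A=\{\Ncc=1\}$, conditioning makes $\tilde P(\A)=1$ automatic, and the kernel $Q_\L^{\tilde\o}$ is supported on $\A$ whenever $\tilde\o\in\A$ (the indicator $\1_\A$ sits inside its density), so the equation is self-consistent on $\A$. It therefore suffices to establish, for every bounded connected $\L$ and every bounded local $f:\O\to\R$, the unnormalised identity
$$\int_\A f(\o)\,P(d\o)=\int_\A\Big(\int f(\tilde\o_{\EE\backslash\EE_\L}\o_\L)\,Q_\L^{\tilde\o}(d\o_\L)\Big)P(d\tilde\o);$$
dividing by $P(\A)>0$ yields the claim. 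Throughout I fix a subsequence along which $Q_n^{bc}\Rightarrow P$.

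First I would record the finite-volume DLR identity for the approximating measures. For $n$ large enough that $\L^\oplus\subset\L_n$, conditioning $Q_n^{bc}$ on the exterior configuration $\o_{\EE_n\backslash\EE_\L}$ produces an explicit kernel. This is a direct computation: the number of open edges splits as $N_n(\o)=N_\L(\o_\L)+(\text{exterior term})$, the perimeter splits as $\partial N_n^{bc}(\o)=\partial N_\L^{\tilde\o}(\o_\L)+(\text{exterior term})$ — this is exactly why $\partial N_\L^{\tilde\o}$ was defined to carry the full dependence on $\o_\L$ — and, given the fixed exterior, the connectivity weight depends on $\o_\L$ only through whether the whole configuration has a single cluster. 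The exterior-dependent factors and the normalisation cancel, leaving a finite-volume kernel $Q_{\L,n}^{\tilde\o}$ of the same shape as $Q_\L^{\tilde\o}$ but with the connectivity indicator read inside $\L_n$ with boundary condition $bc(n)$. Hence, for every bounded local $f$,
$$\int f\,dQ_n^{bc}=\int\Big(\int f(\tilde\o_{\EE_n\backslash\EE_\L}\o_\L)\,Q_{\L,n}^{\tilde\o}(d\o_\L)\Big)Q_n^{bc}(d\tilde\o).$$

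It then remains to pass to the limit $n\to\infty$ and to replace the finite-volume kernel $Q_{\L,n}^{\tilde\o}$ by the infinite-volume kernel $Q_\L^{\tilde\o}$. The perimeter part of the kernel is genuinely local (it depends on $\tilde\o$ only through $\tilde\o_{\L^\oplus}$), so the sole obstacle is the connectivity indicator, which is not a continuous function of $\tilde\o$ for the product topology. To handle this I would approximate: for a large intermediate box $\Delta\supset\L^\oplus$, let $g_\Delta(\tilde\o)$ be the functional obtained by deciding the connectivity of $\tilde\o_{\EE\backslash\EE_\L}\o_\L$ using only paths inside $\EE_\Delta$. Each $g_\Delta$ is local and $P$-almost surely continuous, so weak convergence gives $\int g_\Delta\,dQ_n^{bc}\to\int g_\Delta\,dP$. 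I would then let $\Delta\uparrow\Z^d$ on both sides, invoking Proposition \ref{Prop_notbounded} (all clusters unbounded) and the structure of $P$ on $\A$ to argue that the finite-$\Delta$ connectivity reading converges to the true infinite-volume indicator $\1_\A$.

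The main obstacle is precisely this reconciliation of the non-local connectivity weight. Removing the edges $\EE_\L$ may split the unique cluster of a configuration $\tilde\o\in\A$ into several pieces that only reconnect far away, so the boundary condition $bc(\L,\tilde\o)$ felt by the kernel is genuinely non-local and its finite-box approximation need not stabilise pathwise without control. The delicate point is to show that the set of configurations for which the connectivity reading inside $\EE_\Delta$ still disagrees with the true one has $P$-probability, and $\limsup_n$ of $Q_n^{bc}$-probability, tending to $0$ as $\Delta\uparrow\Z^d$; this is where the unboundedness and essential uniqueness of the cluster, together with a uniform tail estimate transported from $Q_n^{bc}$ to $P$ by weak convergence, must be combined to exchange the two limits $n\to\infty$ and $\Delta\uparrow\Z^d$.
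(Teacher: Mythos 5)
Your overall strategy is the same as the paper's: write the finite-volume DLR identity for $Q_n^{bc}$, localize the non-local connectivity weight by reading connectivity only inside a large intermediate box, and then exchange the limits $n\to\infty$ and $\Delta\uparrow\Z^d$. The reduction to the unnormalised identity, the splitting $N$ and $\partial N$ into an interior and an exterior part (which is indeed why $\partial N_\L^{\tilde\o}$ is defined the way it is), and the introduction of local approximations of the connectivity indicator all match the paper's proof, where the localizing objects are the events $L_{k',k}$ (for $\1_{\{N_{cc}=1\}}$) and $E_k$ (for the kernel $Q_\L^{\tilde\o}$).

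However, there is a genuine gap at exactly the point you flag as ``the delicate point'': you assert that the set where the finite-box connectivity reading disagrees with the true one has $\limsup_n Q_n^{bc}$-probability tending to $0$, but you give no argument, and weak convergence alone cannot deliver it, because that bad event (the paper's $L_{k',k}\cap E_k^c$) is \emph{not local} — it asserts that two pieces meeting $\L$ reconnect only through paths outside $\L_k$ — so $Q_n^{bc}$ of this event need not converge to its $P$-probability, and the $P$-probability bound does not transport back to a bound uniform in $n$. This uniform estimate is the whole content of the third inequality of Lemma \ref{lemmetroispoints}, and the paper proves it by contradiction combined with a local surgery: if the bad event had $Q_n^{bc}$-probability $>\epsilon$ along a subsequence, one disconnects the two offending pieces by a modification inside $\L$ at a cost bounded below by a constant $\alpha>0$ depending only on $\L,\lambda,\mu$ (as in Lemma \ref{localmodif}), which produces the \emph{local} event $F_k$ (two distinct crossings from $\L$ to $\L_k^c$) with $Q_n^{bc}$-probability $\ge\alpha\epsilon$; weak convergence then gives $P(\{N_{cc}=1\}\cap F_k)\ge\alpha\epsilon/2$ for all $k$, and since $F_k$ decreases to $\{N_{cc}\ge 2\}$ this contradicts the uniqueness of the cluster. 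Without this surgery step (or an equivalent uniform tail bound), your exchange of the two limits is not justified, so the proposal as written is an accurate outline of the paper's route but stops short of its key lemma.
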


From Theorems \ref{TheoFCBMconnected} and \ref{TheoGibbs} we deduce directly the following corollary

\begin{coro}\label{coro1}
Let $P$ be in $\LL_s(\lambda,\mu)$ such that $P(0^\EE)<1$. Then $P(.|\{0^\EE\}^c)$ belongs to  $\G_s(\lambda,\mu)$.
\end{coro}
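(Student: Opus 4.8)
The plan is to obtain Corollary \ref{coro1} by directly chaining Theorems \ref{TheoFCBMconnected} and \ref{TheoGibbs}; the only genuine content is a bookkeeping identity that matches the conditioning event $\{0^\EE\}^c$ with $\{\Ncc=1\}$ up to a $P$-null set, plus the verification that conditioning preserves stationarity. The crucial elementary observation I would record first is that $\{\Ncc=0\}=\{0^\EE\}$: a configuration has zero open connected components exactly when it has no open edge, i.e.\ when it equals the vacuum configuration. Hence $\{0^\EE\}^c=\{\Ncc\ge 1\}$.

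Next, since $P\in\LL_s(\lambda,\mu)$, Theorem \ref{TheoFCBMconnected} gives $P(\Ncc=0\text{ or }1)=1$. Combining this with the identity above yields $P(\{0^\EE\}^c)=P(\Ncc\ge 1)=P(\Ncc=1)$, so on the complement of the vacuum state $P$ is entirely supported on $\{\Ncc=1\}$ and the two conditional measures coincide: $P(\,\cdot\mid\{0^\EE\}^c)=P(\,\cdot\mid\Ncc=1)$. The hypothesis $P(0^\EE)<1$ reads $P(\{0^\EE\}^c)>0$, i.e.\ $P(\Ncc=1)>0$, which is exactly the positivity required by Theorem \ref{TheoGibbs}. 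Applying that theorem (legitimate because $\LL_s(\lambda,\mu)\subset\LL(\lambda,\mu)$) gives $P(\,\cdot\mid\Ncc=1)\in\G(\lambda,\mu)$, hence $P(\,\cdot\mid\{0^\EE\}^c)\in\G(\lambda,\mu)$.

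It remains to upgrade this membership to $\G_s(\lambda,\mu)$, i.e.\ to check stationarity of the conditioned measure. The point is that the conditioning event is translation-invariant: the vacuum configuration is fixed by every shift $\tau_x$, so $\{0^\EE\}$ and therefore $\{0^\EE\}^c$ are invariant sets. Since $P$ is stationary, for every shift $\tau_x$ and every bounded measurable $f$, using $\tau_x^{-1}(\{0^\EE\}^c)=\{0^\EE\}^c$, one gets
$$\int f\circ\tau_x\, dP(\,\cdot\mid\{0^\EE\}^c)=\frac{\int (f\circ\tau_x)\,\1_{\{0^\EE\}^c}\,dP}{P(\{0^\EE\}^c)}=\frac{\int f\,\1_{\{0^\EE\}^c}\,dP}{P(\{0^\EE\}^c)}=\int f\, dP(\,\cdot\mid\{0^\EE\}^c),$$
so the conditioned measure is invariant under every translation and lies in $\G_s(\lambda,\mu)$.

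I expect no real obstacle: the statement is labelled a corollary precisely because the two hard ingredients — the connectivity dichotomy $\Ncc\in\{0,1\}$ and the Gibbsianness of a conditioned accumulation point — are already supplied by the cited theorems. The only points needing care are the set identity $\{\Ncc=0\}=\{0^\EE\}$ and the shift-invariance of $\{0^\EE\}^c$, which together let the theorems be concatenated while keeping stationarity intact.
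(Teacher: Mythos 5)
Your proposal is correct and follows exactly the route the paper intends: the paper states the corollary as a direct consequence of Theorems \ref{TheoFCBMconnected} and \ref{TheoGibbs}, using precisely the identification $\{0^\EE\}^c=\{\Ncc=1\}$ modulo a $P$-null set. Your explicit verification that conditioning on the shift-invariant event $\{0^\EE\}^c$ preserves stationarity is a detail the paper leaves implicit, but it is the right (and only) extra check needed.
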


\begin{proof} (of Theorem \ref{TheoGibbs})

Let $P$ be in $\LL(\lambda,\mu)$ such  $P(N_{cc}=1)>0$. By a standard class monotone argument we have to show that for any bounded set $\L\subset\Z^d$, any local bounded function $f$ from $\O$ to $\R$

\begin{equation}\label{DLRintegral}
\Delta:=|E_P( f\1_{\{N_{cc}=1\}})-E_P(f_\L\1_{\{N_{cc}=1\}})|=0, 
\end{equation}
where $f_\L$ is the function from $\O$ to $\R$ defined by 

\begin{equation}
 f_\L(\tilde\omega)=\int f(\tilde \o_{\EE\backslash \EE_\L}\o_\L) dQ_\L^{\tilde \omega}(\omega_{\L}).
\end{equation} 
 
Without loss of generality we assume that $\Vert f \Vert_{\infty} \le 1$. It ensures in particular that $\Vert f_\L \Vert_{\infty} \le 1$ as well. The main issues to prove DLR equations is that the functions $ f_\L$ and $\1_{\{N_{cc}=1\}}$ are not local. To this end we introduce a collection of events which localizes them. Let us start with the function $\1_{\{N_{cc}=1\}}$. For integers $1\le k'<k<\infty$ we set

$$ L_{k',k}=\left\{
\begin{array}{l}
\o \in \O, \text{ the number of connected components} \\
\text{ in } \o_{\L_k} \text{ intersecting } \L_{k'}  \text{ is equal to 1}
\end{array}
\right\}.
$$

We have the following identity 
\begin{eqnarray}\label{representationlocalncc}
 \{N_{cc}=1\}&=& \bigcup_{k'_0\ge 1}\bigcap_{k'\ge k'_0} \bigcup_{k\ge k'} L_{k',k}. \end{eqnarray}
%

Let us now introduce events to localize $f_\L$. We fix $k_0\ge 1$ such that $\L$ and the support of the function $f$ are included in $\L_{k_0-1}$. For any $k\ge k_0$ we set

$$E_k=\left\{
\begin{array}{l}
\o \in \O, \text{ such that } N(\o_{\EE_k\backslash \EE_\L})\ge 1 \text{ and } \\
\text{ the number of connected components in } \o_{\EE_k\backslash \EE_\L} \text{ intersecting } \L \text{ is equal to}\\
 \text {  the number of connected components in } \o_{\EE\backslash \EE_\L} \text{ intersecting } \L 
\end{array}
\right\}.
$$

In other words, the event $E_k$ ensures that two connected components in $\o_{\EE_k\backslash \EE_\L} $ intersecting $\L$  are not connected using open edges outside $\L_k$. Moreover we assume that there exists at least one such connected component. Note that the event $E_k$ is not local. We define the local version of $Q_\L^{\tilde \o}$ on $E_k$ by

$$ Q_{k,\L}^{\tilde \o}(\o_\L)=\frac{1}{Z^{\tilde \o}_{k,\L}}\1_{\A}(\tilde\o_{\EE_k\backslash \EE_\L}^*\o_\L) e^{\lambda N(\omega_\L)}e^{\mu\partial N_\L^{\tilde \o}(\omega_\L)},$$

where the star at the top of $\tilde \o_{\EE_k\backslash \EE_\L}^*$ means that the connected components of $\tilde \o_{\EE_k\backslash \EE_\L}$ not intersecting $\L$ have been closed. Without this modification isolated bounded connected components could violate the connectivity requirement in $\A$. It is clear that the kernel $\tilde \o \to Q_{k,\L}^{\tilde \o}(.)$ is local since it depends only on $\tilde \o_{\EE_k}$. Moreover for any $\tilde \o\in E_k$ we have that $  Q_{\L}^{\tilde \o}(.)= Q_{k,\L}^{\tilde \o_{\EE_k}}(.)$
   and therefore the function  $\tilde \o\to f_{k,\L}(\tilde\omega)=\int f(\tilde \o_{\EE\backslash \EE_\L}\o_\L) dQ_{k,\L}^{\tilde \omega_{\EE_k}}(\omega_{\L})$ is local and satisfies
   
\begin{equation}\label{localizationID} 
f_{k,\L}\1_{E_k}=f_{\L}\1_{E_k}.
\end{equation}

Let "bc" be a boundary condition such that $P$ is an accumulation point of $( Q_n^{bc})$. For sake a simplicity, we note $P=\lim_{n\to \infty}  Q_n^{bc}$ in omitting the limit under a subsequence. 

We need first to prove that the local events $(L_{k',k})$ and  $(E_k)$ have high probability.

\begin{lemm}\label{lemmetroispoints}
For any $\epsilon>0$ there exists $k\ge k'\ge k_0$ and $n_0\ge 1$ such that for all $n\ge n_0$

\begin{equation}\label{troispoints}
 E_P(|\1_{\{N_{cc}=1\}}-\1_{L_{k',k}}|)\le \epsilon, \quad  P(L_{k',k}\cap E^c_k)\le \epsilon \quad \text{ and }\quad  Q_n^{bc}(L_{k',k}\cap  E^c_k)\le \epsilon.
 \end{equation}
\end{lemm}

\begin{proof}

Let $\epsilon>0$ be a positive real. In identity \eqref{representationlocalncc} the unions are increasing and the intersection decreasing so we fix  $k'(\epsilon)\ge k_0$ and $k_1(\epsilon)\ge k'(\epsilon)$ such that the first point of \eqref{troispoints} holds  for $k'=k'(\epsilon)$ and any $k$ larger than $k_1(\epsilon)$. For the second point in \eqref{troispoints} we note that the sequence of events $(E_k)$ is increasing with 

$$ \bigcup_{k\ge 1} E_k=\{0^\EE\}^c,$$

 therefore $P(\{0^\EE\}^c\cap E^c_k)\to 0$ when $k\to\infty$. Since $L_{k',k}\subset\{0^\EE\}^c$  there exists $k_2(\epsilon)\ge k'(\epsilon)$  such that the second point in \eqref{troispoints}  holds for $k'=k'(\epsilon)$ and $k$ larger than $k_2(\epsilon)$. The last point in \eqref{troispoints} is more delicate because we need an uniform bound with respect to $n$. First we show that there exists $k \ge \max(k_1(\alpha \epsilon/2),k_2(\alpha \epsilon/2))$ and $n_0\ge 1$ such that for all $n\ge n_0$

\begin{equation}\label{truccontra}
 Q_n^{bc}(L_{k'(\alpha \epsilon/2),k}\cap  E^c_k)\le \epsilon,
\end{equation}
   
where $0<\alpha<1$ is a constant, determined later, and which depends only on $\L$, $\lambda$ and $\mu$. Since $\alpha\epsilon/2<\epsilon$, the three inequalities in \eqref{troispoints} hold for $k'=k'(\alpha \epsilon/2)$ and $n_0, k$ obtained for \eqref{truccontra}.

 We make a proof by contradiction in order to show  \eqref{truccontra}. So we assume there exists $\epsilon>0$ such that for any  $k \ge \max(k_0^1(\alpha \epsilon/2),k_0^2(\alpha \epsilon/2))$, there exists an increasing sequence $(n_l)_{l\ge 1}$ of integers such that 

\begin{equation}\label{borneinfapprox}
 Q_n^{bc}(L_{k'(\alpha \epsilon/2),k}\cap  E^c_k)>\epsilon.
 \end{equation}

We denote by $F_k$ the following event

$$F_k=\left\{
\begin{array}{l}
\o \in \O, \text{ there exists at least two connected components in } \o_{\EE_k} \\
\text{ intersecting } \L \text{ and } \L_k^c 
\end{array}
\right\}.
$$
 
 The event $L_{k',k}\cap E^c_k$ ensures that there exists at least two connected components in $\o_{\EE_k\backslash \L}$  intersecting $\L$  and being connected outside $\L_k$. However we do not have that $L_{k',k}\cap E^c_k\subset L_{k',k}\cap F_k$ since both connected components mentioned above can be connected in $\L$. But with a local modification in $\L$ we disconnect them with a uniformly bounded cost from below as in the proof of lemma \ref{localmodif}. So from \eqref{borneinfapprox} and this local modification we deduce that there exists a constant $0<\alpha<1$ (which depends only on $\L$, $\lambda$ and $\mu$)  such that for any  $k \ge \max(k_0^1(\alpha \epsilon/2),k_0^2(\alpha \epsilon/2))$,  

\begin{equation}\label{borneinfapprox2}
  Q_n^{bc}(L_{k'(\alpha \epsilon/2),k}\cap  F_k)>\alpha\epsilon.
 \end{equation}

The event $L_{k'(\alpha \epsilon/2),k}\cap  F_k$ is local and $Q_{n_{l}}^{bc}$ converges to $P$ when $l\to\infty$. We deduce that $P(L_{k'(\alpha \epsilon/2),k}\cap  F_k)\ge \alpha  \epsilon$. By the first point of \eqref{troispoints} we have $E_P(|\1_{\{N_{cc}=1\}}-\1_{L_{k'(\alpha \epsilon/2),k}}|)\le \alpha\epsilon/2$ and therefore

\begin{equation}\label{borneinfapprox3}
 P(\{N_{cc}=1\}\cap F_k)\ge\alpha\epsilon- \alpha\epsilon/2=\alpha\epsilon/2.
 \end{equation}

 Since the sequence of events $(F_k)$ is decreasing with intersection $\{N_{cc}\ge 2\}$, we obtain that 
 
 $$P(\{N_{cc}= 1\}\cap \{N_{cc}\ge 2\})\ge \alpha\epsilon/2>0$$
 
  which is a contradiction. The lemma is proved.

\end{proof}

Let us come back to the proof of \eqref{DLRintegral}. For any $\epsilon>0$ we choose $k\ge k'\ge k_0$ and $n_0\ge 1$ as in Lemma \ref{lemmetroispoints}. Thanks to the localization identity \eqref{localizationID}

\begin{eqnarray*}
\Delta&=&|E_P(f\1_{\{N_{cc}=1\}})-E_P(f_\L\1_{\{N_{cc}=1\}})|\\
&\le &|E_P(f\1_{\{N_{cc}=1\}})-E_P(f_\L\1_{\{L_{k',k}\}})|+\epsilon\\
&\le & |E_P(f\1_{\{N_{cc}=1\}})-E_P(f_{\L}\1_{\{L_{k',k}\}}\1_{E_k})| +2\epsilon \\
&= & |E_P(f\1_{\{N_{cc}=1\}})-E_P( f_{k,\L}\1_{\{L_{k',k}\}}\1_{E_k})| +2\epsilon \\
&\le & |E_P(f\1_{\{N_{cc}=1\}})-E_P( f_{k,\L}\1_{\{L_{k',k}\}})| + 3\epsilon. 
\end{eqnarray*}

Since the function  $f_{k,\L}\1_{L_{k',k}}$ is local we have $\lim_{n\to \infty}  E_{Q_n^{bc}}(f_{k,\L} )=E_P( f_{k,\L})$. so for $n$ large enough (larger than $n_0$)

\begin{eqnarray*}
\Delta & \le & |E_P(f\1_{\{N_{cc}=1\}})-  E_{Q_n^{bc}}(f_{k,\L} \1_{\{L_{k',k}\}})|+4\epsilon\\
& \le & |E_P(f\1_{\{N_{cc}=1\}})-  E_{ Q_n^{bc}}(f_{k,\L}  \1_{\{L_{k',k}\}}\1_{E_k})|+5\epsilon.
\end{eqnarray*}
For any $\tilde \omega\in\A$
 \begin{eqnarray*}
 (f_\L\1_{\{L_{k',k}\}} \1_{E_k})(\tilde \o)&=& \int f(\tilde \o_{\EE\backslash \EE_\L}\o_\L) dQ_\L^{\tilde \omega}(\omega_{\L})\1_{\{L_{k',k}\}}(\tilde \o) \1_{E_k}(\tilde \o)\\
 & =& \frac{1}{Z_\L^{\tilde \o}} \sum_{\o_\L\in\O_\L} \1_{\{L_{k',k}\}}(\tilde \o) \1_{E_k}(\tilde \o)\1_{\A}(\tilde\o_{\EE\backslash \EE_\L}\o_\L)\\
 & & \qquad f(\tilde \o_{\EE\backslash \EE_\L}\o_\L) e^{\lambda N(\omega_\L)}e^{\mu\partial N_\L^{\tilde \o}(\omega_\L)}\\
  & =& \frac{1}{Z_\L^{\tilde \o}} \sum_{\o_\L\in\O_\L} \1_{\{L_{k',k}\}}(\tilde\o_{\EE\backslash \EE_\L}\o_\L) \1_{E_k}(\tilde\o_{\EE\backslash \EE_\L}\o_\L)\1_{\A}(\tilde\o_{\EE\backslash \EE_\L}\o_\L)\\
 & & \qquad f(\tilde \o_{\EE\backslash \EE_\L}\o_\L) e^{\lambda N(\omega_\L)}e^{\mu\partial N_\L^{\tilde \o}(\omega_\L)}\\
 &=& (f\1_{\{L_{k',k}\}} \1_{E_k})_\L(\tilde \o).
\end{eqnarray*}

 Now from a simple finite volume DLR equation for $Q_n^{bc}$, we can substitute $f$ to $f_\L$ and obtain for $n$ large enough

\begin{eqnarray*}
\Delta & \le &  \Bigg|E_P(f\1_{\{N_{cc}=1\}})-  E_{ Q_n^{bc}}(f \1_{\{L_{k',k}\}}\1_{E_k}) \Bigg|+6\epsilon\\
& \le & \Bigg|E_P(f\1_{\{N_{cc}=1\}})-  E_{Q_n^{bc}}(f\1_{\{L_{k',k}\}}) \Bigg|+7\epsilon\\
& \le & \Bigg|E_P(f\1_{\{N_{cc}=1\}})-  E_{P}(f\1_{\{L_{k',k}\}}) \Bigg|+8\epsilon\\
& \le & \Bigg|E_P(f\1_{\{N_{cc}=1\}})-  E_{P}(f\1_{\{N_{cc}=1\}}) \Bigg|+9\epsilon\\
& =& 9 \epsilon.
\end{eqnarray*}

This inequality holds for any $\epsilon>0$. Therefore $\Delta=0$ and the theorem is proved.

\end{proof}
%
%
%
%
%
%
%
%
%
%
%
%
%

\subsection{Pressure}

In this section we study the pressure of the  model. Let us recall that $Z_{n}^{\text{bc}}(\lambda,\mu)$ is the partition function of  $Q_n^{\text{bc}}$;

$$ Z_{n}^{\text{bc}}(\lambda,\mu):=\sum_{\o_{\L_n}\in\O_{\L_n}} \1_{\left\{\Ncc^{\text{bc}}(\omega_{\L_n})=1\right\}} e^{\lambda N_\L(\o_{\L_n})} e^{\mu \partial N^{\text{bc}}_{\L_n}(\o_{\L_n})}.$$

\begin{prop}\label{propexistencepression}

The following limit exists in $[0,+\infty)$ and is called pressure with wired boundary condition

$$\PP(\lambda,\mu)=\lim_{n\to \infty} \frac{ \log(Z_{n}^{\text{wired}}(\lambda,\mu))}{\# \EE_n}.$$

Moreover for any compact set $\mathcal K\subset \R^2$ there exists a constant $C>0$ such that for  any $\lambda,\mu\in\mathcal K$ and any  boundary condition "bc" 

\begin{equation}\label{comparisonpressure1}
\left | \frac{ \log(Z_{n}^{\text{bc}}(\lambda,\mu))}{\# \EE_n} -  \frac{ \log(Z_{n}^{\text{wired}}(\lambda,\mu))}{\# \EE_n}\right| \le \frac{C}{\sqrt{n}}
\end{equation}

and for any $\tilde \o\in\A$ such that at least one vertex in $\L_n$ belongs to an open edge of $\tilde \o$

\begin{equation}\label{comparisonpressure2}
\left | \frac{ \log(Z_{n}^{\tilde \o}(\lambda,\mu))}{\# \EE_n} -  \frac{ \log(Z_{n}^{\text{wired}}(\lambda,\mu))}{\# \EE_n}\right| \le \frac{C}{\sqrt{n}}.
\end{equation}

In particular, the pressures with boundary condition "bc" or $\tilde \o\in\A$ exist and are equal to the pressure with wired boundary condition.

\end{prop}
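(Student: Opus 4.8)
The plan is to dispatch the four claims in sequence: the \emph{a priori} bounds that place $\PP(\lambda,\mu)$ in $[0,+\infty)$, the existence of the wired limit, and then the two comparison estimates, which simultaneously transfer the limit to every boundary condition. Throughout I fix a compact $\mathcal K\subset\R^2$ and set $M=\sup_{(\lambda,\mu)\in\mathcal K}(|\lambda|+|\mu|)$. For the upper bound I note that $N_{\L_n}\le\#\EE_n$ and $\partial N^{\text{wired}}_{\L_n}\le\#\EE_n$, so each of the at most $2^{\#\EE_n}$ terms of $Z_n^{\text{wired}}$ is bounded by $e^{M\#\EE_n}$, whence $\frac{1}{\#\EE_n}\log Z_n^{\text{wired}}\le\log 2+M$. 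For non-negativity I keep only the all-closed interior configuration: under the wired boundary condition the unique open boundary set already forms one connected component, so this configuration is allowed, and its weight is $e^{\mu\,\partial N}$ with $\partial N$ of surface order, $\partial N\le C n^{d-1}$. Hence $\frac{1}{\#\EE_n}\log Z_n^{\text{wired}}\ge -C n^{d-1}/\#\EE_n\to 0$, which (once the limit is shown to exist) forces $\PP(\lambda,\mu)\ge 0$.

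For existence I would prove that $a_n:=\log Z_n^{\text{wired}}$ is superadditive up to surface corrections. Given $n$ and $N\gg n$, tile $\L_N$ by roughly $(N/n)^d$ disjoint translates of $\L_n$ separated by straight corridors of bounded width reaching $\partial\L_N$, place an arbitrary wired-allowed configuration in each block, and open every corridor edge. Since each block configuration meets the boundary of its own block, which is adjacent to the now fully open and connected corridor network reaching $\partial\L_N$, the glued configuration is wired-allowed in $\L_N$; moreover $N$ and $\partial N^{\text{wired}}$ change only through corridor edges and interface mismatches, whose number is $O(N^d/n)$. This injective gluing gives $a_N\ge (N/n)^d a_n-CN^d/n$, and dividing by $\#\EE_N$, letting $N\to\infty$ and then $n\to\infty$, a Fekete-type argument yields convergence of $\frac{1}{\#\EE_n}\log Z_n^{\text{wired}}$ to a finite $\PP(\lambda,\mu)$.

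The comparison estimates are the crux. Here neither allowed set contains the other — a wired-allowed configuration may consist of several clusters each meeting $\partial\L_n$ yet mutually disconnected inside, while a bc-allowed configuration may be a single cluster sitting in the interior — so both inequalities require surgery localized in a boundary region. To bound $\log Z_n^{\text{wired}}$ below by $\log Z_n^{\text{bc}}$ I map each bc-allowed configuration to a wired-allowed one, the only obstruction being an interior cluster meeting no boundary class, which is repaired by opening a single path of length $O(n)$ to $\partial\L_n$. For the reverse bound I start from a wired-allowed configuration and open all edges of the boundary shell $\L_n\setminus\L_{n-t}$; since every cluster meets $\partial\L_n$ and hence the shell, this fuses them into one cluster reaching $\partial\L_n$, which satisfies the single-component requirement of the target boundary condition. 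In both cases the changes of $N$ and of $\partial N$, and the multiplicity $2^{O(\#\text{shell})}$ of the modification, are controlled by the number of edges of the boundary region; dividing by $\#\EE_n$ produces the announced $O(1/\sqrt n)$ bound \eqref{comparisonpressure1}. The identical surgery, anchored to the cluster of an arbitrary $\tilde\o\in\A$ covering a vertex of $\L_n$ (which is where that hypothesis is used), gives \eqref{comparisonpressure2}, and the equality of all the pressures with $\PP(\lambda,\mu)$ follows.

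\textbf{The main obstacle} is precisely this two-sided surgery: the constraint $\{\Ncc=1\}$ is a global, non-product event, so passing between the highly connected wired boundary and a less connected (or arbitrary $\tilde\o$) boundary forces one either to reconnect an uncontrolled number of boundary-touching clusters or to attach an interior blob, and the delicate point is to carry this out while keeping both the energetic cost and the combinatorial multiplicity of the reconnection sub-proportional to $\#\EE_n$. The superadditive gluing of the second step rests on the same mechanism, so organizing a single robust surgery that serves both existence and boundary-independence is the natural way to structure the proof.
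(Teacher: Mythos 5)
Your proposal is correct in outline, and the existence step (superadditivity obtained by tiling $\L_N$ with blocks joined through opened corridors) together with the elementary bounds placing $\PP$ in $[0,\log 2+\max(0,\lambda)+\max(0,\mu)]$ is essentially the paper's block decomposition. Where you genuinely diverge is in the comparison estimates, which are indeed the crux. The paper avoids cluster-by-cluster surgery altogether: it introduces a mesoscopic grid $\EE_n^g$ of spacing $\sqrt n$ (about $n^{d-1/2}$ edges, cutting $\L_n$ into roughly $n^{d/2}$ cubes of about $n^{d/2}$ edges each), splits $Z_n^{\text{bc}}$ according to whether the grid carries an open edge, bounds the all-grid-closed contribution by the absolute term $C_1^{n^{d/2}}$ (the unique cluster being then trapped in a single small cube), and otherwise opens the whole grid, after which the events $\{\Ncc^{\text{bc}}=1\}$ and $\{\Ncc^{\text{bc'}}=1\}$ and the functionals $\partial N^{\text{bc}}$, $\partial N^{\text{bc'}}$ coincide; comparing the absolute term with $Z_n^{\text{bc'}}\ge C_3^{n^{d-1}}$ gives $Z_n^{\text{bc}}\le C^{n^{d-1/2}}Z_n^{\text{bc'}}$, whence the $1/\sqrt n$. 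Your boundary surgery (a width-$O(1)$ open shell to fuse the boundary-touching clusters of a wired-allowed configuration, and an $O(n)$-long open path to rescue the single interior cluster of a bc-allowed one) treats the same two obstructions directly, at energetic and entropic cost $C^{O(n^{d-1})}$ and $C^{O(n)}$ respectively, and therefore yields the sharper error $O(1/n)$; it also uses the hypothesis on $\tilde\o$ exactly as the paper does, to guarantee that every piece of $\tilde\o_{\EE\setminus\EE_{\L_n}}$ reaches $\partial\L_n$ and is absorbed by the opened shell. The price of your route is that the surgery must be specified carefully enough to control the multiplicity of the reconnection map (a canonical choice of path, preimages bounded by the number of admissible paths times $2^{O(n)}$), whereas the paper's grid trick replaces this by one uniform modification; both arguments are sound.
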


\begin{proof}

We first prove that the pressure  with wired boundary condition exists by following a standard bloc decomposition. For any $2\le m\le n$, we consider the Euclidean division n=km+l with $0\le l< m$ and  $k\ge 0$. Let $(\L_{m-1}^i)_{1\le i \le k^d}$ be a family of $k^d$ disjoint sets inside $\L_n$ where each $\L_{m-1}^i$ is a translation of $\L_{m-1}$.  We denote by $\EE^{\text{out}}_{n,m}$ the edges in $\EE_n$ which are not inside the boxes $(\L_{m-2}^i)_{1\le i \le k^d}$;
$$\EE^{\text{out}}_{n,m}:=\EE_n\backslash \left(\bigcup_{1\le i \le k^d} \EE_{\L_{m-2}^i}\right).$$

 A bloc decomposition of the partition function implies 

\begin{eqnarray*}
Z_{n}^{\text{wired}}(\lambda,\mu) &=& \sum_{\o_{\L_n}\in\O_{\L_n}} \1_{\left\{\Ncc^{\text{wired}}(\omega_{\L_n})=1\right\}} e^{\lambda N_\L(\o_{\L_n})} e^{\mu \partial N^{\text{wired}}_{\L_n}(\o_{\L_n})}\\
&\ge & \sum_{
\begin{subarray}{c}
\o_{\L_n}\in\O_{\L_n}\\
 \text{the edges } \o_{\EE^{\text{out}}_{n,m}} \text{ are open.}
\end{subarray} 
}
  \1_{\left\{\Ncc^{\text{wired}}(\omega_{\L_n})=1\right\}} e^{\lambda N_\L(\o_{\L_n})} e^{\mu \partial N^{\text{wired}}_{\L_n}(\o_{\L_n})}\\
 & \ge & e^{\lambda \# \EE^{\text{out}}_{n,m}} e^{2dk^dm^{d-1}\min(\mu,0)}\prod_{1\le i \le k^d}  Z_{\L_{m-2}^i}^{\text{wired}}(\lambda,\mu),
\end{eqnarray*}

where the combinatorial term $2dk^dm^{d-1}$ in the exponential is the maximal number of edges in $\cup_{1\le i \le k^d} \EE_{\L_{m-1}^i}$ sharing a vertex with an edge in $\EE^{\text{out}}_{n,m}$. We denote by $N_m$ the following limit $\#\EE_n/k^d$ when $n$ goes to infinity. It is easy to see that $N_m$ exists and that it is equivalent to $\#\EE_{m}$ when $m\to \infty$. Therefore

$$ \liminf_{n\to \infty} \frac{1}{\#\EE_n} \ln(Z_{n}^{\text{wired}}(\lambda,\mu)) \ge  \frac{1}{N_{m}} \left(\ln(Z_{m-2}^{\text{wired}}(\lambda,\mu))- Cm^{d-1}\right),$$

where $C\ge 0$ is a constant (depending only on $d$) taking account all boundary terms. This inequality holds for each $m\ge 2$. So, letting $m$ tends to infinity

$$ \liminf_{n\to \infty} \frac{1}{\#\EE_n} \ln(Z_{n}^{\text{wired}}(\lambda,\mu)) \ge  \limsup_{m\to \infty} \frac{1}{N_m} \ln(Z_{m-2}^{\text{wired}}(\lambda,\mu))= \limsup_{m\to \infty} \frac{1}{\#\EE_m} \ln(Z_{m}^{\text{wired}}(\lambda,\mu)),  $$

which proves that the limit exists in $\R\cup\{\pm \infty\}$. Simple combinatorial arguments show that this limit is larger than $0$ and smaller than $\log(2)+\max(0,\lambda)+\max(0,\mu)$, which excludes the case where $\PP(\lambda,\mu)$ is plus or minus infinity.  

Let us now prove \eqref{comparisonpressure1}. The proof of \eqref{comparisonpressure2} is similar excepted a detail which we provide at the end. For any $n\ge 2$ we consider the following set $\EE_n^g$ of edges in $\EE_n$ (the symbol $g$ is related to the grid built by the edges considered in this set). An edge $e=(k,k')\in\EE_n$ belongs to  $\EE_n^g$ if it is at the boundary (i.e. $e\notin \EE_{n-1}$) or if at least one of the $d$ coordinates $i$ of $k$ (or $k'$)  satisfies $i=[[\sqrt{n}[i/\sqrt{n}]] $  where $[.]$ denotes the integer part of a real number. 

In other words the set of edges $\EE_n^g$ fills the boundary of $\L_n$ and separate $\L_n$ with large hyperplanes (in all directions) with inter-distance around $\sqrt{n}$. The complement $\EE_n\backslash \EE_n^g$ is composed with approximatively $(2n)^{d/2}$ disjoints cubes of edges with length side $\sqrt{n}$. The number of edges in $\EE_n^g$ is of order $n^{d-1/2}$. Let $\mathcal K$ be a compact set in $\R^2$ and let $\lambda,\mu\in\mathcal K$. For every boundary condition bc and bc'
 
 \begin{eqnarray*}
 Z_{n}^{\text{bc}}(\lambda,\mu) &=& \sum_{\o_{\L_n}\in\O_{\L_n}} \1_{\left\{\Ncc^{\text{bc}}(\omega_{\L_n})=1\right\}} e^{\lambda N_\L(\o_{\L_n})} e^{\mu \partial N^{\text{bc}}_{\L_n}(\o_{\L_n})}\\
  &=& \sum_{\o_{\L_n}\in\O_{\L_n}} \1_{\{\Ncc^{\text{bc}}(\omega_{\L_n})=1\text{ and at least one edge of } \EE_n^g \text{ is open}\}} e^{\lambda N_\L(\o_{\L_n})} e^{\mu \partial N^{\text{bc}}_{\L_n}(\o_{\L_n})}\\
  & &+  \sum_{\o_{\L_n}\in\O_{\L_n}} \1_{\{\Ncc^{\text{bc}}(\omega_{\L_n})=1\text{ and all edges of } \EE_n^g \text{ are closed}\}} e^{\lambda N_\L(\o_{\L_n})} e^{\mu \partial N^{\text{bc}}_{\L_n}(\o_{\L_n})}.\\
\end{eqnarray*}
  
 If all edges of $\EE_n^g$ are closed it means that the unique open cluster is included in one cube of the grid. The number of cubes is of order $n^{d/2}$, the number of edges in each cube if of order $n^{d/2}$ and so the number of  configurations in each cube is of order $2^{(n^{d/2})}$. Moreover the weight $e^{\lambda N_\L(\o_{\L_n})} e^{\mu \partial N^{\text{bc}}_{\L_n}(\o_{\L_n})}$ is of order $max(1,e^\lambda,e^\mu)^{(n^{d/2})}$. Therefore there exists a constant $C_1$ such that
 
 $$  \sum_{\o_{\L_n}\in\O_{\L_n}} \1_{\{\Ncc^{\text{bc}}(\omega_{\L_n})=1\text{ and all edges of } \EE_n^g \text{ are closed}\}} e^{\lambda N_\L(\o_{\L_n})} e^{\mu \partial N^{\text{bc}}_{\L_n}(\o_{\L_n})} \le C_1^{(n^{d/2})}.$$
  
For the other sum, we compare it with the same sum but for which all edges in $\EE_n^g$ are open;  
 
 $$  \sum_{\o_{\L_n}\in\O_{\L_n}} \1_{\{\Ncc^{\text{bc}}(\omega_{\L_n})=1\text{ and all edges of } \EE_n^g \text{ are open}\}} e^{\lambda N_\L(\o_{\L_n})} e^{\mu \partial N^{\text{bc}}_{\L_n}(\o_{\L_n})}.$$ 
   
For each configuration for which "at least one edge in $\EE_n^g$ open" we associate easily a configuration for which all edges are open by opening the edges in $\EE_n^g$ which are closed. The modification of the weight $e^{\lambda N_\L(\o_{\L_n})} e^{\mu \partial N^{\text{bc}}_{\L_n}(\o_{\L_n})}$ is of order a constant to the power of the cardinal of $\EE_n^g$. That provides a multiplicative factor $C^{(n^{d-1/2})}$ for some constant $C$. Moreover the association mentioned above is not a bijection since two different configurations with "at least one edge in $\EE_n^g$ open" can produce the same configuration after opening all edges in $\EE_n^g$. However any configuration with "all edges in $\EE_n^g$ are open" comes from at most $2^{\#\EE_n^g}$ different configurations with   "at least one edge in $\EE_n^g$ open". We deduce that there exists a constant $C_2$ such 

\begin{eqnarray*}
& &  \sum_{\o_{\L_n}\in\O_{\L_n}} \1_{\{\Ncc^{\text{bc}}(\omega_{\L_n})=1\text{ and at least one edge of } \EE_n^g \text{ is open}\}} e^{\lambda N_\L(\o_{\L_n})} e^{\mu \partial N^{\text{bc}}_{\L_n}(\o_{\L_n})}\\
&  \le & C_2^{(n^{d-1/2})} \sum_{\o_{\L_n}\in\O_{\L_n}} \1_{\{\Ncc^{\text{bc}}(\omega_{\L_n})=1\text{ and all edges of } \EE_n^g \text{ are open}\}} e^{\lambda N_\L(\o_{\L_n})} e^{\mu \partial N^{\text{bc}}_{\L_n}(\o_{\L_n})}.\\
\end{eqnarray*}
 
Now noting that if all edges of $\EE_n^g $ are open then $\{ \Ncc^{\text{bc}}(\omega_\L)=1\}=\{ \Ncc^{\text{bc'}}(\omega_\L)=1\}$ and $\partial N^{\text{bc}}_{\L_n}(\o_{\L_n})=\partial N^{\text{bc'}}_{\L_n}(\o_{\L_n})$ we obtain 
  
  \begin{eqnarray}\label{majorationZ}
 Z_{n}^{\text{bc}}(\lambda,\mu) 
 & \le &  C_2^{n^{d-1/2}} \sum_{\o_{\L_n}\in\O_{\L_n}} \1_{\{\Ncc^{\text{bc'}}(\omega_{\L_n})=1\text{ and all edges of } \EE_n^g \text{ are open}\}} e^{\lambda N_\L(\o_{\L_n})} e^{\mu \partial N^{\text{bc'}}_{\L_n}(\o_{\L_n})}\nonumber \\
  & & + C_1^{n^{d/2}}  \nonumber\\
  & \le &  C_2^{n^{d-1/2}} \sum_{\o_{\L_n}\in\O_{\L_n}} \1_{\{\Ncc^{\text{bc'}}(\omega_{\L_n})=1\}} e^{\lambda N_\L(\o_{\L_n})} e^{\mu \partial N^{\text{bc'}}_{\L_n}(\o_{\L_n})}\nonumber\\
  & & + C_1^{n^{d/2}} \nonumber \\
  & = &  C_2^{n^{d-1/2}}  Z_{n}^{\text{bc'}}(\lambda,\mu)  + C_1^{n^{d/2}}.
 \end{eqnarray}

Since a configuration with all edges at the boundary open and all edges in the bulk closed is allowed we deduce that $Z_{n}^{\text{bc'}}\ge C_3^{(n^{d-1})}$ for some constant $C_3>0$. Therefore $C_1^{n^{d/2}} \le C_1^{n^{d/2}}(1/C_3)^{(n^{d-1})}Z_{n}^{\text{bc'}}$. At the end there exists a constant $C>0$ such that 

\begin{equation}\label{inegalitepression}
Z_{n}^{\text{bc}}(\lambda,\mu)\le C^{(n^{d-1/2})} Z_{n}^{\text{bc'}}(\lambda,\mu).
\end{equation}

Passing to the logarithm and dividing by $\#\EE_n$ we find that

\begin{equation}
 \frac{ \log(Z_{n}^{\text{bc}}(\lambda,\mu))}{\# \EE_n} -  \frac{ \log(Z_{n}^{\text{bc'}}(\lambda,\mu))}{\# \EE_n} \le \log(C)\frac{n^{d-1/2}}{\#\EE_n}.
\end{equation}

Noting that $\# \EE_n$ is of order $n^d$ and applying twice the previous inequality with one time bc="wired" and a second time with bc'="wired" the inequality \eqref{comparisonpressure2} follows.

Involving the proof of \eqref{comparisonpressure2}. The scheme of the proof is exactly the same excepted we need the assumption that  at least one vertex in $\L_n$ belongs to an edge of $\tilde \o$. Without these assumption the first inequality in \eqref{majorationZ} is wrong.
Indeed if the connected component of $\tilde \o$ does not hit $\L_n$ then $\Ncc^{\tilde \o}(\omega_{\L_n})=1$ if and only if $\omega_{\L_n}=0^{\EE_n}$. Therefore the assertion $\{ \Ncc^{\text{bc}}(\omega_{\L_n})=1\}=\{ \Ncc^{\tilde \o}(\omega_{\L_n})=1\}$ if all edges of $\EE_n^g $ are open is not true in general. This equivalence is crucial to prove \eqref{majorationZ}. The rest of the proof is the same.

\end{proof}
%
%
%
%
%
%
%
%
%
%
%
%
%

Let us finish this section with standard properties of the pressure function $\PP$.

\begin{prop}\label{propconvexitypressure}

The function ($\lambda,\mu)\to \PP(\lambda,\mu)$ is  convex and non-decreasing with respect to each variable $\lambda$ or $\mu$.
\end{prop}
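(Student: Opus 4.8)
The plan is to exploit the fact that, for each fixed $n$, the finite-volume quantity $f_n(\lambda,\mu):=\frac{1}{\#\EE_n}\log Z_n^{\text{wired}}(\lambda,\mu)$ is a normalized logarithm of a sum of exponentials of functions that are \emph{affine} in $(\lambda,\mu)$. Indeed, writing out the partition function, each summand is a non-negative weight $\1_{\{\Ncc^{\text{wired}}=1\}}\ge 0$ multiplied by $\exp\!\big(\lambda N_{\L_n}(\o_{\L_n})+\mu\,\partial N^{\text{wired}}_{\L_n}(\o_{\L_n})\big)$, and the exponent is affine in $(\lambda,\mu)$. Since convexity and coordinatewise monotonicity are both stable under pointwise limits, and Proposition \ref{propexistencepression} guarantees $f_n\to\PP$ pointwise, it suffices to establish these two properties at the finite-volume level and then pass to the limit.

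For convexity, I would apply H\"older's inequality to the partition function. Given two parameter pairs $(\lambda_1,\mu_1)$ and $(\lambda_2,\mu_2)$ and $t\in[0,1]$, writing $a_i=N_{\L_n}(\o^{(i)}_{\L_n})$ and $b_i=\partial N^{\text{wired}}_{\L_n}(\o^{(i)}_{\L_n})$ for the summation index, the factorization $e^{(t\lambda_1+(1-t)\lambda_2)a+(t\mu_1+(1-t)\mu_2)b}=\big(e^{\lambda_1 a+\mu_1 b}\big)^{t}\big(e^{\lambda_2 a+\mu_2 b}\big)^{1-t}$ together with H\"older with exponents $1/t$ and $1/(1-t)$ yields
\begin{equation*}
Z_n^{\text{wired}}\big(t\lambda_1+(1-t)\lambda_2,\,t\mu_1+(1-t)\mu_2\big)\le \big(Z_n^{\text{wired}}(\lambda_1,\mu_1)\big)^{t}\big(Z_n^{\text{wired}}(\lambda_2,\mu_2)\big)^{1-t}.
\end{equation*}
Taking logarithms and dividing by $\#\EE_n$ shows that each $f_n$ is convex, and passing to the pointwise limit gives convexity of $\PP$.

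For monotonicity, I would note that $f_n$ is smooth in $(\lambda,\mu)$ and differentiate under the sum. The partial derivatives are
\begin{equation*}
\partial_\lambda f_n=\frac{1}{\#\EE_n}\,E_{Q_n^{\text{wired}}}\big[N_{\L_n}\big],\qquad
\partial_\mu f_n=\frac{1}{\#\EE_n}\,E_{Q_n^{\text{wired}}}\big[\partial N^{\text{wired}}_{\L_n}\big],
\end{equation*}
and both are non-negative since $N_{\L_n}\ge 0$ and $\partial N^{\text{wired}}_{\L_n}\ge 0$ by definition. Hence each $f_n$ is non-decreasing in $\lambda$ and in $\mu$ separately, and this coordinatewise monotonicity survives the pointwise limit, yielding the claim for $\PP$.

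There is no genuinely hard step here: the result is the standard convexity-of-pressure statement from statistical physics. The only point requiring a small amount of care is that the finite-volume convexity and monotonicity statements are each expressed as inequalities between values of $f_n$, so that passing to the pointwise limit $f_n\to\PP$ preserves them verbatim; this is immediate and requires no uniformity in $(\lambda,\mu)$.
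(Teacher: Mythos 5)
Your proof is correct and follows the same overall strategy as the paper: establish monotonicity and convexity at the finite-volume level for $\frac{1}{\#\EE_n}\log Z_n^{\text{wired}}$ and pass to the pointwise limit, with the monotonicity half (first derivatives equal to the non-negative expectations $E_{Q_n^{\text{wired}}}[N_{\L_n}]$ and $E_{Q_n^{\text{wired}}}[\partial N_{\L_n}^{\text{wired}}]$) being literally the paper's argument. The only difference is in the convexity half, where you use H\"older's inequality while the paper computes the second derivatives and identifies the Hessian as the covariance matrix of $(N_{\L_n},\partial N_{\L_n}^{\text{wired}})$; the two are equivalent here, though your H\"older route yields joint convexity in $(\lambda,\mu)$ directly without having to observe that a covariance matrix is positive semi-definite.
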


\begin{proof}

Let us recall that 

$$ Z_{n}^{\text{wired}}(\lambda,\mu) = \sum_{\o_{\L_n}\in\O_{\L_n}} \1_{\left\{\Ncc^{\text{wired}}(\omega_{\L_n})=1\right\}} e^{\lambda N_{\L_n}(\o_{\L_n})} e^{\mu \partial N^{\text{wired}}_{\L_n}(\o_{\L_n})}.$$

So by simple calculus we have

\begin{equation}\label{increasespressure}
 \frac{\partial \log(Z_n^{\text{wired}})}{\partial \lambda} =E_{Q_{\L_n}^{\text{wired}}}(N_{\L_n}),\quad
 \frac{\partial \log(Z_n^{\text{wired}})}{\partial \mu} = E_{Q_{\L_n}^{\text{wired}}}(\partial N_{\L_n}^{\text{wired}}),
 \end{equation}

 $$ \frac{\partial^2 \log(Z_n^{\text{wired}})}{\partial^2 \lambda} =  \text{Var}_{Q_{\L_n}^{\text{wired}}}(N_{\L_n}),\quad \frac{\partial^2 \log(Z_n^{\text{wired}})}{\partial^2 \mu} =  \text{Var}_{Q_{\L_n}^{\text{wired}}}(\partial N_{\L_n}^{\text{wired}})  $$

and 

 $$ \frac{\partial^2 \log(Z_n^{\text{wired}})}{\partial \lambda\partial \mu} =  \text{Cov}_{Q_{\L_n}^{\text{wired}}}(N_{\L_n},\partial N_{\L_n}^{\text{wired}}),$$

which ensures that the function $(\lambda,\mu)\mapsto  \log(Z_{\L_n}^{\text{wired}}(\lambda,\mu))$ is non-decreasing (with respect to each variable $\lambda$ or $\mu$) and convex. Dividing by $\#\EE_n$ and passing to the limit, the function $(\lambda,\mu)\mapsto  \PP(\lambda,\mu))$ is non-decreasing and convex as well.

\end{proof}

\subsection{Existence of the threshold}\label{Section_transition}

In this section we show that for any $\mu\in\R$, there exists a threshold $\lambda^*(\mu)\in\R$ such that  $\G_s(\lambda,\mu)$ is empty for $\lambda<\lambda^*(\mu)$ and not empty for $\lambda>\lambda^*(\mu)$. We do not know in general what happens at criticality $\lambda=\lambda^*(\mu)$. Recall that there is no stochastic monotony between elements in $\G(\lambda,\mu)$ when $\lambda$ is increasing and therefore the existence of a threshold $\lambda^*(\mu)$ is not obvious. We define $\lambda^*(\mu)$ as follows

\begin{equation}\label{definitionpcd} 
\lambda^*(\mu)=\sup\Big\{\lambda\in\R, \PP(\lambda,\mu)= 0\Big\},
\end{equation}
with the convention $\lambda^*(\mu)=-\infty$ if the set is empty. Actually in Section \ref{Section_bounds} below we show that $\lambda^*(\mu)$ is finite for any $\mu\in\R$. By proposition  \ref{propexistencepression} and  \ref{propconvexitypressure} the function $\PP$ is non-negative and convex. We deduce that the set $\{(\lambda,\mu)\in\R^2, \PP(\lambda,\mu)=0\}$ is convex and so the function $\mu\to \lambda^*(\mu)$ is concave (in particular continuous).

\begin{theo}\label{TheoremThreshold} In any dimension $d\ge 2$ and for all $\lambda, \mu\in\R$ 
\begin{itemize}
\item if  $\lambda>\lambda^*(\mu)$ then  any $P\in\LL_s^{\text{per}}(\lambda,\mu)$ is not equal to $\delta_{0^\EE}$. In particular $\G_s(\lambda,\mu)\neq\emptyset$. 
\item if  $\lambda<\lambda^*(\mu)$ then 
 $\LL_s(\lambda,\mu)=\{\delta_{0^\EE}\}$ and $\G_s(\lambda,\mu)=\emptyset$.
\end{itemize}

\end{theo}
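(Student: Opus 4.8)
The plan is to read off the asymptotic edge density from the $\lambda$-derivative of the pressure, and to place the vacuum/non-vacuum dichotomy exactly where this derivative vanishes. The starting observation is that, since $\PP$ is non-negative (Proposition \ref{propexistencepression}) and non-decreasing and convex in $\lambda$ (Proposition \ref{propconvexitypressure}), the definition \eqref{definitionpcd} of $\lambda^*(\mu)$ gives $\PP(\lambda,\mu)=0$ for every $\lambda\le\lambda^*(\mu)$ and $\PP(\lambda,\mu)>0$ for $\lambda>\lambda^*(\mu)$. In particular $\lambda\mapsto\PP(\lambda,\mu)$ is identically $0$ on the open interval $(-\infty,\lambda^*(\mu))$, hence differentiable there with vanishing $\lambda$-derivative; whereas for $\lambda>\lambda^*(\mu)$ convexity forces the left derivative to be bounded below by the positive slope $(\PP(\lambda,\mu)-\PP(\lambda^*(\mu),\mu))/(\lambda-\lambda^*(\mu))>0$.

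The bridge between these analytic facts and the measures is the identity \eqref{increasespressure}: for each $n$ the finite-volume density $\#\EE_n^{-1}E_{Q_n^{\bc}}(N_{\L_n})$ is exactly the $\lambda$-derivative of the convex map $\lambda\mapsto \#\EE_n^{-1}\log Z_n^{\bc}(\lambda,\mu)$, and by Proposition \ref{propexistencepression} these maps converge pointwise to $\PP(\cdot,\mu)$ for every boundary condition. The classical theorem on convergence of derivatives of convex functions then gives, at every point of differentiability of $\PP(\cdot,\mu)$, that $\#\EE_n^{-1}E_{Q_n^{\bc}}(N_{\L_n})$ converges to $\partial\PP/\partial\lambda(\lambda,\mu)$, and more generally that its $\liminf_n$ dominates the left derivative. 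Thus the finite-volume density tends to $0$ in the subcritical regime and stays bounded away from $0$ in the supercritical regime.

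For the supercritical statement I would work with the periodic boundary condition, for which $Q_n^{\mathrm{per}}$ is shift-invariant on the torus, so that its edge density is spatially uniform and passes to any weak limit. If $\lambda>\lambda^*(\mu)$ the previous paragraph gives $\liminf_n \#\EE_n^{-1}E_{Q_n^{\mathrm{per}}}(N_{\L_n})>0$, hence every $P\in\LL_s^{\mathrm{per}}(\lambda,\mu)$ (a non-empty set whose elements are automatically stationary) has strictly positive edge density, so $P\ne\delta_{0^\EE}$. Corollary \ref{coro1} then yields $P(\cdot\mid\{0^\EE\}^c)\in\G_s(\lambda,\mu)$, and in particular $\G_s(\lambda,\mu)\ne\emptyset$.

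For the subcritical statement, fix any boundary condition and any $P\in\LL_s(\lambda,\mu)$. By Proposition \ref{Prop_notbounded} and Theorem \ref{TheoFCBMconnected}, $P$ is carried by $\{N_{cc}=0\}\cup\{N_{cc}=1\}$, and since $P$ is stationary, $P\ne\delta_{0^\EE}$ would force $P(e\text{ open})>0$ for every edge $e$; it therefore suffices to prove that the limiting density is $0$. For periodic conditions this is immediate, the uniform density tending to $0$. The delicate point, which I expect to be the main obstacle, is to transfer ``the global average density over $\L_n$ tends to $0$'' into ``the local density of the stationary limit is $0$'' for an arbitrary boundary condition, since a priori open edges could concentrate in the bulk while the $\L_n$-average still vanishes. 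I would resolve this using stationarity of the limit together with a translation-averaging argument: tiling $\L_n$ by disjoint translates of a fixed box $\L_R$ gives that the spatial average of the box occupations is at most $\#\EE_R\,\#\EE_n^{-1}E_{Q_n^{\bc}}(N_{\L_n})\to 0$, while stationarity of $P$ identifies every translate's limiting occupation with $E_P(N_{\L_R})$; a careful interchange, controlling the translates near $\partial\L_n$, should then force $E_P(N_{\L_R})=0$ for all $R$, i.e. $P=\delta_{0^\EE}$, whence $\LL_s(\lambda,\mu)=\{\delta_{0^\EE}\}$. Finally $\G_s(\lambda,\mu)=\emptyset$, since a stationary Gibbs measure would satisfy $P(\A)=1$ and hence have positive edge density, which the same pressure-derivative analysis excludes.
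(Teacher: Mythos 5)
Your supercritical argument is correct but takes a different route from the paper. Where you extract a positive lower bound on the edge density from the left derivative of $\PP(\cdot,\mu)$ via the standard convexity lemma ($\liminf_n g_n'(\lambda)\ge D^-\PP(\lambda,\mu)>0$ for the convex finite-volume pressures $g_n$) and then use torus-invariance of $Q_n^{\text{per}}$, the paper instead bounds the specific entropy $I(Q^{\text{per}})$ and shows that either the limiting edge density is positive or $I(Q^{\text{per}})<\log 2=I(\delta_{0^\EE})$; both conclusions exclude the vacuum. Your version is more direct and gives the stronger conclusion of strictly positive density; the paper's entropy route avoids having to discuss convergence of derivatives at all. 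Both then invoke Corollary \ref{coro1} identically.

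The subcritical half has a genuine gap, precisely at the point you flag as the main obstacle, and the patch you propose does not work. Weak convergence $Q_{n_k}^{\bc}\to P$ controls $E_{Q_{n_k}^{\bc}}(N_{\L_R+x})$ only for \emph{fixed} $x$; it says nothing about translates $\L_R+x$ with $x$ of order $n$, which are exactly the tiles that dominate your spatial average. Concretely, the density under $Q_n^{\bc}$ could be bounded below by $\delta>0$ on a region of diameter $f(n)$ around the origin with $f(n)\to\infty$, $f(n)=o(n)$, and be negligible elsewhere: the $\L_n$-average still tends to $0$, yet every fixed local observable eventually sees the dense region, so the (possibly stationary) weak limit has density $\delta>0$. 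So ``average density over $\L_n$ tends to $0$'' cannot be upgraded to ``the stationary limit is the vacuum'' by translation averaging. The paper circumvents this by reversing the logical order: it first proves $\G_s(\lambda,\mu)=\emptyset$, using the DLR equations for an infinite-volume $P$ together with a bound on $\frac{1}{\#\EE_n}\int N\,dQ_{\L_n}^{\tilde\o}$ that is \emph{uniform over all boundary configurations} $\tilde\o\in\A$ (its Assertion 2, whose proof needs the uniform pressure comparison \eqref{comparisonpressure2} and convexity in $\lambda$). For a stationary Gibbs measure, stationarity gives $P(e\text{ open})=\frac{1}{\#\EE_n}E_P(N_{\L_n})$ exactly, and DLR rewrites the right-hand side as an average over $\tilde\o\sim P$ of the kernel expectations, which the uniform bound sends to $0$ --- contradicting $P(\A)=1$. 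Only then does $\LL_s(\lambda,\mu)=\{\delta_{0^\EE}\}$ follow, via Corollary \ref{coro1}: a stationary accumulation point other than $\delta_{0^\EE}$ would produce an element of the empty set $\G_s(\lambda,\mu)$. Your closing sentence asserting $\G_s(\lambda,\mu)=\emptyset$ by ``the same pressure-derivative analysis'' skips exactly this DLR-plus-uniform-bound step, which is the substantive content of the subcritical direction.
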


\begin{proof} 

The proof of the theorem is based on these two main assertions: 

\begin{itemize}[label=-]

\item{\bf [Assertion 1]}:  If  $\lambda>\lambda^*(\mu)$ then any $P\in\LL_s^{\text{bc}}(\lambda,\mu)$ is not equal to $\delta_{0^\EE}$
 
\item {\bf [Assertion 2]}: For any $\mu\in\R$ and any $\lambda<\lambda^*(\mu)$,

$$ \lim_{n\to \infty} \frac{1}{\#\EE_n} \sup_{
\begin{subarray}{c}
\tilde \omega \in \A 
 \end{subarray}
 } \int N dQ_{\L_n}^{\tilde \omega}=0.$$

\end{itemize}
 
Assertion 1 and corollary \ref{coro1} show the first item of the theorem. For the second item let us start to show that if $\lambda <\lambda^*(\mu)$ then $\G_s(\lambda,\mu)=\emptyset$. We make a proof by contradiction in assuming that there exists $P\in\G_s(\lambda,\mu)$. By Assertion 2, for any fix edge $e$ 

\begin{eqnarray*}
P(\text{ the edge } e \text{ is open})&=& \lim_{n\to \infty}\frac{1}{\#\EE_n}   \int N_{\L_n}(\omega_{\EE_n}) P(d\omega)\\
&=&  \lim_{n\to \infty}\frac{1}{\#\EE_n}  \int \int N_{\L_n} dQ_{\L_n}^{\tilde \omega} P(d\tilde \omega)\\
&\le & \lim_{n\to \infty}  \frac{1}{\#\EE_n} \sup_{
\begin{subarray}{c}
\tilde \omega \in \A 
 \end{subarray}
 } \int N dQ_{\L_n}^{\tilde \omega}\\
&=&0.
\end{eqnarray*}

This implies that $P=\delta_{0^\EE}$. It is in contradiction with $P(\A)=1$.  It remains to prove that  $\LL_s(\lambda,\mu)=\{\delta_{0^\EE}\}$. If it is not the case there exists $P\in\LL_s(\lambda,\mu)$ such that $P\neq \delta_{0^\EE}$. But by Corollary \ref{coro1} the set $\G_s(\lambda,\mu)$ should be not empty which it is not the case. The theorem is proved.

\end{proof}

 \begin{proof} (of Assertion 1)
 
 Let $\lambda>\lambda^*(\mu)$ and so by definition  $\PP(\lambda,\mu)>0$. We have to that any accumulation point of  $(Q_n^{\text{per}})_{n\ge 1}$ (for any "bc") is not reduced to the null configuration $0^\EE$. For simplicity we write $ Q_n^{\text{per}}\to Q^{\text{per}}$ although it is only for a subsequence. We define the specific entropy of any stationary probability measure $P$ on $\O$ with respect to $\P_{1/2}$ by
$$I(P)=\lim_{n\to\infty} \frac{1}{\#\EE_n} I(P_{\L_n}| \P^{\L_n}_{1/2}),$$   
where $P_{\L_n}$ is the restriction of $P$ on $\O_n$ and $I(P_{\L_n}| \P^{\L_n}_{1/2})$ is simply the standard relative entropy of  $P_{\L_n}$ with respect to $\P^{\L_n}_{1/2}$. Following chapater 15 in \cite{georgii} $I(P)$ is well defined and 

\begin{equation} \label{borneHO}
I(Q^{\text{per}}) \le \limsup_{n\to \infty} \frac{1}{\#\EE_n} I(Q_n^{\text{per}}| \P^{\L_n}_{1/2}).
\end{equation}
 
Recall the expression of  $Q_n^{\text{per}}$

\begin{eqnarray*}
Q_{n}^{\text{per}}(\o_{\L_n})&=&\frac{1}{Z_n^{\text{per}}} \1_{\left\{\Ncc^{\text{per}}(\omega_{\L_n})=1\right\}} e^{\lambda N_{\L_n}(\o_{\L_n})} e^{\mu \partial N^\text{per}_{\L_n}(\o_{\L_n})}\\
&=& \frac{2^{\#\EE_n}}{Z_n^{\text{per}}} \1_{\left\{\Ncc^{\text{per}}(\omega_{\L_n})=1\right\}} e^{\lambda N_{\L_n}(\o_{\L_n})} e^{\mu \partial N^\text{per}_{\L_n}(\o_{\L_n})}\P^{\L_n}_{1/2}(\o_{\L_n}).
\end{eqnarray*}

By Proposition \ref{propexistencepression} we have
\begin{eqnarray*}
I(Q^{\text{per}})& \le  & \limsup_{n\to \infty}   \frac{1}{\#\EE_n} I\left ( Q_n^{\text{per}}| \P^{\L_n}_{1/2}\right)\\
& = & \log(2)+ \limsup_{n\to \infty} -\frac{\log(Z_n^{\text{per}})}{\#\EE_n} +   \frac{\lambda}{\#\EE_n}  \int N_{\L_n} dQ_n^{\text{per}} +   \frac{\mu}{\#\EE_n}  \int \partial N^{\text{per}}_{\L_n} dQ_n^{\text{per}}\\
& < & \log(2)+ \lambda \limsup_{n\to \infty}  \bar Q_n^{\text{per}}(\text{the edge } e \text{ is open})+ \max(\mu,0)\times\\
& &  \times\limsup_{n\to \infty}Q_n^{\text{per}}(\text{the edge } e \text{ is closed and one of edges } (e_i)_{1\le i\le 2(d+1)} \text{ is open} )
\end{eqnarray*} 

where $e$ is any fixed edge in $\EE$ and $(e_i)_{1\le i\le 2(d+1)}$ the $2(2d-1)$ neighbour edges of the edge $e$. Since $ Q_n^{\text{per}}\to Q^{\text{per}}$

\begin{eqnarray*}
I_{1/2}(Q^{\text{per}})
& <& \log(2)+ \lambda \limsup_{n\to \infty}   Q_n^{\text{per}}(\text{the edge } e \text{ is open})\\
& & + \sum_{i=1}^{2(d+1)} \max(\mu,0)  \limsup_{n\to \infty}  Q_n^{\text{per}}(\text{the edge } e_i \text{ is open})\\ 
& = & \log(2)+  \big(\lambda+2(2d-1)\max(\mu,0)\big) Q^{\text{per}}(\text{the edge } e \text{ is open}).
\end{eqnarray*} 

 If $  Q^{\text{per}}(\text{the edge } e \text{ is open})>0$ then we have exactly what we want (i.e. $ Q^{\text{per}}$ is not $\delta_{0^\EE}$). If $  Q^{\text{per}}(\text{the edge } e \text{ is open})=0$  then $I_{1/2}(Q^{\text{per}})<\log(2)=I_{1/2}(\delta_{0^\EE})$, which is enough to claim that $ Q^{\text{bc}}$ is not $\delta_0^\EE$. In any case $ Q^{\text{per}}$ is not the probability measure $\delta_{0^\EE}$. 

\end{proof}

 \begin{proof} (of Assertion 2)

Let $\mu\in\R$, $\lambda<\lambda^*(\mu)$
and  $\tilde \omega\in\A$. There is two cases to distinguish:

{\it -First case:}  no vertex in $\L_n$ belongs to an edge of $\tilde \o$. In this case the distribution $Q_{\L_n}^{\tilde \omega}$ is reduced to the probability measure $\delta_{0^{\EE_n}}$ and therefore

$$ \int N dQ_{\L_n}^{\tilde \omega}=0.$$

{\it -Second case:}  at least one vertex in $\L_n$ belongs to an edge of $\tilde \o$. In this case we use estimates from Proposition \ref{propexistencepression} to prove the assertion. Precisely, for any $n\ge 1$  we have

\begin{equation}\label{derive}
\frac{1}{\#\EE_n} \int N dQ_{\L_n}^{\tilde \omega}=  \frac{1}{\#\EE_n} \frac{\partial \log(Z_n^{\tilde \omega})}{\partial \lambda}.
 \end{equation}

Since that the function $\lambda\mapsto  \log(Z_{\L_n}^{\tilde \omega})$ is increasing and convex we deduce that for any $\lambda< \lambda' <\lambda^*(\mu)$

\begin{eqnarray*}
 \frac{1}{\#\EE_n}  \int N dQ_{\L_n}^{\tilde \omega} & \le & \frac{1}{\lambda'-\lambda} \left[ \frac{1}{\#\EE_n}\Big(\log(Z_n^{\tilde\o}(\lambda',\mu)-\log(Z_n^{\tilde\o}(\lambda,\mu)\right]\\
 & \le & \frac{1}{\lambda'-\lambda} \left[ \frac{1}{\#\EE_n}\Big(\log(Z_n^{\text{wired}}(\lambda',\mu)-\log(Z_n^{\text{wired}}(\lambda,\mu)\Big)+2C/\sqrt{n}\right],
 \end{eqnarray*}

where $C>0$ comes from \eqref{comparisonpressure2}. The right term  does not depend on $\tilde \omega$ and tends to $0$ when $n$ tends to infinity. Assertion 2 is proved.
\end{proof}

\subsection{Explicit value for the threshold}

Based on the representation of the infinite cluster for a Bernoulli bond percolation model (see Proposotion \ref{propbernoulli}), we identify explicitly the value of  $\lambda^*(\mu)$ for  $\mu$ small enough. Recall that  $p_c(d)$ is the Bernoulli percolation threshold defined in \eqref{thresholdperco}.

\begin{theo}\label{theoremexplicit}
For any $\mu\le \log(1-p_c(d))$, we have $\lambda^*(\mu)=\log(1-e^\mu)$. Moreover for $\log(1-p_c(d))<\mu<0$ then $\lambda^*(\mu)>\log(1-e^\mu)$.

\end{theo}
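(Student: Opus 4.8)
The plan is to prove the two-sided estimate by first recording the lower bound $\lambda^*(\mu)\ge\log(1-e^\mu)$, valid for every $\mu<0$, and then matching it from above when $\mu\le\log(1-p_c(d))$ while opening a strict gap when $\log(1-p_c(d))<\mu<0$. Throughout I write $p=1-e^\mu$, so that $(\log p,\log(1-p))=(\log(1-e^\mu),\mu)$ is precisely the Bernoulli point of Example~1. There the identity $Z_n^{\text{wired}}(\log p,\log(1-p))=1$ gives $\PP(\log(1-e^\mu),\mu)=0$; since $\PP\ge 0$ and $\PP$ is non-decreasing in $\lambda$ (Propositions~\ref{propexistencepression} and \ref{propconvexitypressure}), the set $\{\lambda:\PP(\lambda,\mu)=0\}$ is a closed half-line containing $\log(1-e^\mu)$, whence $\lambda^*(\mu)\ge\log(1-e^\mu)$.

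For the upper bound when $p>p_c(d)$ (i.e. $\mu<\log(1-p_c(d))$) I would show that the right $\lambda$-derivative of $\PP$ at the Bernoulli point is strictly positive and conclude by convexity. Fix $\mu=\log(1-p)$ and set $\phi_n(\lambda):=\frac{1}{\#\EE_n}\log Z_n^{\text{wired}}(\lambda,\mu)$; these functions are convex, $\phi_n\to\PP(\cdot,\mu)$, and by \eqref{increasespressure} one has $\phi_n'(\log p)=\frac{1}{\#\EE_n}E_{Q_n^{\text{wired}}}(N_{\L_n})$, the mean edge density. By Proposition~\ref{propbernoulli}, $Q_n^{\text{wired}}\to\P_p^\infty$; since $\{e\text{ open}\}$ is a cylinder event, $Q_n^{\text{wired}}(\{e\text{ open}\})\to\P_p^\infty(\{e\text{ open}\})=\P_p(e\in C_\infty)>0$ for $p>p_c(d)$, and averaging over the bulk (boundary edges being a negligible fraction) gives $\liminf_n\phi_n'(\log p)>0$. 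For convex $\phi_n\to\PP(\cdot,\mu)$ the elementary bound $D^+_\lambda\PP(\log p,\mu)\ge\limsup_n\phi_n'(\log p)$ holds, so $D^+_\lambda\PP(\log p,\mu)>0$, and convexity together with $\PP(\log p,\mu)=0$ yields $\PP(\lambda,\mu)\ge(\lambda-\log p)\,D^+_\lambda\PP(\log p,\mu)>0$ for all $\lambda>\log p$. Hence $\lambda^*(\mu)\le\log(1-e^\mu)$, and the lower bound forces equality. The critical endpoint $\mu=\log(1-p_c(d))$, where the infinite cluster may fail to exist, I would reach by continuity: $\lambda^*$ is concave hence continuous, so letting $\mu\uparrow\log(1-p_c(d))$ in the equality already proved gives $\lambda^*(\log(1-p_c(d)))=\log p_c(d)=\log(1-e^{\log(1-p_c(d))})$.

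For the strict inequality when $0<p<p_c(d)$ I would show that $\PP(\lambda,\mu)=0$ survives for $\lambda$ slightly above $\log p$. Keeping $\mu=\log(1-p)$ and writing $t=e^{\lambda-\log p}\ge 1$, the representation of Example~1 gives
$$Z_n^{\text{wired}}(\lambda,\mu)=E_{\P_p^{\L_n}}\!\left[\,t^{\,N_\partial}\,\right],$$
where $N_\partial$ is the number of open edges belonging to clusters that touch $\partial\L_n$. In the subcritical regime the sharpness of the phase transition yields exponential decay of the cluster size, so there is $t_0>1$ with $E_{\P_p}\!\left[t_0^{|C(0)|}\right]<\infty$. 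For $1\le t\le t_0$ the quantity $N_\partial$ is supported, in law, near $\partial\L_n$, a set carrying only $O(n^{d-1})$ vertices, and one expects $\log E_{\P_p^{\L_n}}[t^{N_\partial}]=O(n^{d-1})=o(\#\EE_n)$, hence $\PP(\lambda,\mu)=0$ for all $\lambda\le\log p+\log t_0$. This gives $\lambda^*(\mu)\ge\log p+\log t_0>\log(1-e^\mu)$.

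The main obstacle is precisely this last surface-order estimate. The difficulty is genuine: tilting a subcritical percolation by $t^{N_\partial}$ mimics raising the parameter, and the naive lattice-animal first-moment bound only controls $\sum_{\mathcal C\ni v}(tp)^{|\mathcal C|}$ in the small-parameter range $tp<1/C_d$, well below $p_c(d)$; moreover the boundary clusters interact attractively through the shared perimeter factor $(1-p)^{\partial N^{\text{wired}}}$, so the partition function dominates the product of independent cluster weights. Reaching every $p<p_c(d)$ therefore requires converting the finite exponential moment $E_{\P_p}[t^{|C(0)|}]<\infty$, available up to $p_c(d)$ by sharpness, into a uniform $O(n^{d-1})$ bound on $\log Z_n^{\text{wired}}$ via a polymer/cluster estimate that keeps the attractive interaction under control. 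The other ingredients---the Bernoulli-point identity, the convex-analytic passage from finite-volume edge densities to $D^+_\lambda\PP$, and the continuity of $\lambda^*$---are routine.
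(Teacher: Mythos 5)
Your lower bound and your supercritical upper bound are both correct. For the upper bound you argue via the right $\lambda$-derivative of the pressure at the Bernoulli point (positive edge density of the infinite cluster gives $\liminf_n\phi_n'(\log p)>0$, then convexity), which is a valid and genuinely different route from the paper: the paper instead observes that $\G_s(\log p,\log(1-p))$ contains $\P_p^\infty$ (Proposition \ref{propbernoulli} together with Corollary \ref{coro1}) and invokes Theorem \ref{TheoremThreshold} to get $\lambda^*(\mu)\le\log p$. Your derivative argument is more quantitative ($\PP$ grows at least linearly above $\log p$) and bypasses the Gibbs-measure machinery; the paper's is shorter given the results already in place. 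Your continuity treatment of the endpoint $\mu=\log(1-p_c(d))$ is fine.

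The genuine gap is exactly where you locate it: the surface-order bound $\log E_{\P_p^{\L_n}}\bigl[t^{N_\partial}\bigr]=O(n^{d-1})$ for subcritical $p$ and $t$ slightly above $1$, which you state as expected but do not prove. The obstacles you cite (attractive interaction of boundary clusters, lattice-animal bounds failing up to $p_c$) dissolve once the comparison is made at the level of the random variable $N_\partial$ under $\P_p$ itself, where the perimeter factor has already been absorbed. First bound $N_\partial$ by $N^{\partial\L_n}$, the total number of edges in the unrestricted clusters of the vertices of $\partial\L_n$. Then explore these clusters sequentially: given the cluster of $i$, the edges of the cluster of $j$ not yet revealed are found by exploring the unexplored region, and their number is stochastically dominated by an independent, unconditioned copy of $N^{j}$. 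Iterating over $\partial\L_n$ yields $N^{\partial\L_n}\preceq\sum_{i\in\partial\L_n}\tilde N^{i}$ with the $\tilde N^{i}$ i.i.d.\ copies of the single-vertex cluster size, hence
$$Z_{n}^{\text{wired}}\bigl(\log p+\alpha,\log(1-p)\bigr)\le E_{\P_p}\left(e^{\alpha N^{0}}\right)^{\#\partial \L_n},$$
which is the $O(n^{d-1})$ estimate you need, since $E_{\P_p}(e^{\alpha N^{0}})<\infty$ for $\alpha$ small by the exponential decay of the subcritical cluster size (valid for every $p<p_c(d)$ by sharpness). Dividing by $\#\EE_n\asymp n^{d}$ gives $\PP(\log p+\alpha,\log(1-p))=0$ and the strict inequality. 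No polymer or cluster expansion is needed; the only input beyond this exploration coupling is the exponential tail of $|C(0)|$ up to $p_c(d)$.
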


\begin{proof}

Note first that for any $0<p<1$ and $\lambda=\log(p)$ and $\mu=\log(1-p)$ the weight $ \1_{\left\{\Ncc^{\text{wired} }(\omega_\L)=1\right\}} e^{\lambda N_\L(\o_\L)} e^{\mu \partial N^{\text{wired}}(\o_\L)}$ in the Definition \ref{defQn} is simply the probability, under $\P_p^\L$, that the open edges in $\o_\L$ is the collections of clusters hitting the boundary $\partial \L$. Therefore the partition function $Z_\L^{\text{wired}}(\lambda,\mu)$ is the sum of these probabilities and is equal to one.  Therefore the pressure $\PP(\lambda,\mu)=0$ and by definition \eqref{definitionpcd} $\lambda^*(\mu)\ge \lambda$. We deduce that for all  $\mu<0$ 

\begin{equation}\label{ineq1}
 \lambda^*(\mu)\ge  \log(1-e^\mu).
\end{equation}

It remains to prove that this inequality is  an equality if and only if  $\mu\le \log(1-p_c(d))$.

 Let us start by proving that for  $\mu<\log(1-p_c(d))$ it is an equality. We fix $\mu< \log(1-p_c(d))$ and $\lambda=\log(1-e^\mu)$. By Proposition \ref{propbernoulli} and Corollary \ref{coro1}, the set $\G_s(\lambda,\mu)$ contains the probability measure $\P_p^\infty$ (i.e. the distribution of the infinite cluster for the Bernoulli bond percolation) for $p=1-e^\mu>p_c(d)$. It implies that $\G_s(\lambda,\mu)\neq \{\delta_{0^\EE}\}$ and by Theorem \ref{TheoremThreshold} we deduce that

\begin{equation}\label{ineq2}
 \lambda^*(\mu)\le  \log(1-e^\mu).
\end{equation}

Let us now prove that for $\mu>\log(1-p_c(d))$ the strict inequality $\lambda^*(\mu)>  \log(1-e^\mu)$ holds. We have to show that for $\lambda>\log(1-e^\mu)$ small enough the pressure $\PP(\lambda,\mu)=0$. Let $\alpha>0$, we have

\begin{eqnarray*}
Z_{n}^{\text{wired}}(\log(1-e^\mu)+\alpha,\mu) &=&\sum_{\o_{\L_n}\in\O_{\L_n}} \1_{\left\{\Ncc^{\text{wired}}(\omega_\L)=1\right\}}e^{\alpha N_\L(\o_{\L_n})} e^{\log(1-e^\mu) N_\L(\o_{\L_n})} e^{\mu \partial N^{\text{wired}}_{\L_n}(\o_{\L_n})}\\
&=& E_{\P_p}\left(e^{\alpha N_n^{\text{boundary}}}\right),
\end{eqnarray*}

where, as usual $p=1-e^\mu$, and $N_n^{\text{boundary}}$ is the random variates counting the number of edges inside open clusters in $\L_n$ hitting the boundary $\partial \L_n$. For every $\Delta\subset \Z^d$ we denote by $N^{\Delta}$  the variable counting the number of edges in the clusters hitting a vertex $i\in\Delta$ (we do not assume that the clusters are included in $\L_n$). So $N_n^{\text{boundary}}$ is dominated by $N^{\partial \L_n}$. By a standard coupling argument, for any distinct $i,j\in\Z^d$, the distribution of $N^{\{i,j\}}$ under $\P_p$ is stochastically dominated by the convolution of distributions of $N^{i}$ and $N^{j}$ (both under $\P_p$). Indeed, given $N^{i}$, the distribution of $N^{\{i,j\}}$ is the sum of $N^{i}$ and the number of  edges in the clusters hitting $\{i,j\}$ not yet explored from $j$. This random number is clearly stochastically dominated by the distribution of $N^{j}$ itself. If we repeat this coupling for all vertices in the boundary, we obtain that the distribution of $N_n^{\text{boundary}}$ is stochastically dominated by the convolution of distributions $(N^{i})_{i\in\partial \L_n}$. We deduce that

\begin{eqnarray*}
Z_{n}^{\text{wired}}(\log(1-e^\mu)+\alpha,\mu) &\le & \prod_{i\in\partial\L_n} E_{\P_p}\left(e^{\alpha N^{i}}\right).
\end{eqnarray*}

Note that random variables $(N^{i})$ are identical distributed under $\P_p$. Moreover $p=1-e^\mu<p_c(d)$  and so the subcritical regime occurs. Therefore the variable $N^{0}$ under $\P_p$, equals to the size of the cluster containing $0$ in a Bernoulli bond percolation model with parameter $p$, is exponentially decreasing \cite{HDT}. So for $\alpha>0$ small enough, $N^{0}$ admits an $\alpha$-exponential moment; i.e. $E(e^{\alpha N^{0}})<+\infty$. We deduce that

\begin{eqnarray*}
Z_{n}^{\text{wired}}(\log(1-e^\mu)+\alpha,\mu) &\le & E(e^{\alpha N^{0}})^{\#\partial \L_n},
\end{eqnarray*}

which implies that

\begin{eqnarray*}
\PP(\log(1-e^\mu)+\alpha,\mu)&=&\lim_{n\to \infty} \frac{ \log(Z_{n}^{\text{wired}}(\log(1-e^\mu)+\alpha,\mu))}{\# \EE_n}\\
&\le & \lim_{n\to \infty} \frac{\#\partial \L_n}{\# \EE_n}  \log\left(E(e^{\alpha N^{0}})\right) \\
&=&0.
\end{eqnarray*}

\end{proof}

\subsection{Lower and upper bounds for the threshold}\label{Section_bounds}

In this section we provide a lower and upper bounds for $\lambda^*(\mu)$ by using the convexity of the function $\PP$. 

Let us start with the upper bound.

\begin{prop}\label{superupperbound}
For $\mu\le \log(1-p_c(d))$ then $\lambda^*(\mu)=\log(1-e^\mu)$ and for  $\mu > \log(1-p_c(d))$ then  
$$\lambda^*(\mu)\le\log(p_c(d))-\frac{1-p_c(d)}{p_c(d)}\Big(\mu- \log(1-p_c(d))\Big).$$
\end{prop}
\begin{proof}

Let us recall the definition of $\lambda^*(\mu)$ in \eqref{definitionpcd} which ensures that the curve $\mu\to (\mu,\lambda^*(\mu))$ in $\R^2$ is in fact the boundary of the convex set $\{(\lambda,\mu)\in\R^2, \PP(\lambda,\mu)=0\}$. By theorem \ref{theoremexplicit}, this curve is explicit for values $\{\mu< \log(1-p_c(d))\}$ and equals to the curve $\mu\to (\mu,\log(1-e^\mu))$. That proves the first part of the proposition.

By convexity of the set $\{(\lambda,\mu)\in\R^2, \PP(\lambda,\mu)=0\}$ all the curve is included in the half plan in $\R^2$ delimited by the tangent line at the point $(\log(p_c(d),\log(1-p_c(d))$. The equation of this line is $\mu\mapsto \log(p_c(d))-\frac{1-p_c(d)}{p_c(d)}(\mu- \log(1-p_c(d)))$ and the proposition follows (see Figure \ref{figdiagram}).

\end{proof}

Let us now give the lower bound

\begin{prop} \label{superlowerbound}
For $\mu\le \log(1-1/(2d-1))$ then $\lambda^*(\mu)\ge \log(1-e^\mu)$ and for  $\mu\ge \log(1-1/(2d-1))$ then  

$$\lambda^*(\mu)\ge -\log(2d-1)+(2d-2)\Big(\log(1-1/(2d-1))-\mu\Big).$$
\end{prop}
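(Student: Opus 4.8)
The first inequality, valid for $\mu\le\log(1-1/(2d-1))$, is nothing but \eqref{ineq1} from the proof of Theorem \ref{theoremexplicit} (which already gives $\lambda^*(\mu)\ge\log(1-e^\mu)$ for every $\mu<0$), so the whole content lies in the second, linear bound. Write $c_d:=-\log(2d-1)+(2d-2)\log\big((2d-2)/(2d-1)\big)$ and $\mu_0:=\log(1-1/(2d-1))$; the claimed bound says exactly that the line $\{\lambda+(2d-2)\mu=c_d\}$ lies below the boundary curve $\mu\mapsto\lambda^*(\mu)$ for $\mu\ge\mu_0$. Since $\PP\ge0$, is non-decreasing in $\lambda$, and $\lambda^*(\mu)=\sup\{\lambda:\PP(\lambda,\mu)=0\}$, the plan is to prove that $\PP(\lambda,\mu)=0$ whenever $\mu\ge\mu_0$ and $\lambda+(2d-2)\mu\le c_d$; that is, to produce a subexponential upper bound on $Z_n^{\text{wired}}(\lambda,\mu)$. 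Taking $\lambda$ on the line then yields $\lambda^*(\mu)\ge -\log(2d-1)+(2d-2)(\mu_0-\mu)$.

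The starting point is the identity already noted in Example 1 and at the beginning of the proof of Theorem \ref{theoremexplicit}: for every $p\in(0,1)$, choosing $(\lambda,\mu)=(\log p,\log(1-p))$ turns each summand of $Z_n^{\text{wired}}$ into the $\P_p^{\L_n}$-probability that the open edges are precisely the clusters hitting $\partial\L_n$, whence, writing $S$ for a configuration $\o_{\L_n}$ with $\Ncc^{\text{wired}}=1$ and abbreviating $N(S)=N_{\L_n}(S)$, $\partial N(S)=\partial N^{\text{wired}}_{\L_n}(S)$,
$$\sum_{S}p^{N(S)}(1-p)^{\partial N(S)}=Z_n^{\text{wired}}(\log p,\log(1-p))=1.$$
I would use this with the special value $p^*=1/(2d-1)$, for which $\log p^*=-\log(2d-1)$, $\log(1-p^*)=\mu_0$, and, crucially, $c_d=\log p^*+(2d-2)\log(1-p^*)$, since $p\mapsto\log p+(2d-2)\log(1-p)$ is maximised exactly at $p^*$ with maximal value $c_d$.

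Now set $a:=\lambda-\log p^*$ and $b:=\mu-\mu_0$, so that $\mu\ge\mu_0$ means $b\ge0$; by monotonicity of $\PP$ in $\lambda$ it suffices to treat equality $a+(2d-2)b=0$. Factoring the general weight against the $p^*$-weight gives
$$Z_n^{\text{wired}}(\lambda,\mu)=\sum_{S}(p^*)^{N(S)}(1-p^*)^{\partial N(S)}\,e^{a\,N(S)+b\,\partial N(S)},$$
and on the line $a=-(2d-2)b$ the exponent equals $b\big(\partial N(S)-(2d-2)N(S)\big)$. The geometric heart of the argument is the deterministic perimeter bound $\partial N(S)\le(2d-2)N(S)+Cn^{d-1}$, valid for every such $S$ with $C=C(d)$: summing the $\le 2d$ edge-slots at the vertices $V(S)$ touched by $S$, subtracting the $2N(S)$ slots used by open edges, and adding the $O(n^{d-1})$ slots at $\partial\L_n$ yields $\partial N(S)\le 2d\,|V(S)|-2N(S)+O(n^{d-1})$, while $|V(S)|\le N(S)+\#\{\text{clusters}\}\le N(S)+\#\partial\L_n$ because each cluster is connected and must hit the boundary. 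As $b\ge0$, the exponent is at most $bCn^{d-1}$, so
$$Z_n^{\text{wired}}(\lambda,\mu)\le e^{bCn^{d-1}}\sum_{S}(p^*)^{N(S)}(1-p^*)^{\partial N(S)}=e^{bCn^{d-1}},$$
which divided by $\#\EE_n\sim n^d$ gives $\PP(\lambda,\mu)\le0$, hence $=0$, and the bound on $\lambda^*(\mu)$ follows.

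The main obstacle is precisely the sign of $b$: the perimeter estimate controls $\partial N$ from above by $(2d-2)N$, so the factor $e^{b(\partial N-(2d-2)N)}$ can be discarded only when $b=\mu-\mu_0\ge0$; this is exactly why the linear bound is asserted for $\mu\ge\mu_0$, with \eqref{ineq1} covering smaller $\mu$. The coefficient $(2d-2)$ is forced by the perimeter bound (sharp for tree- and path-like clusters), and the optimal choice $p^*=1/(2d-1)$ is the maximiser producing the constant $c_d=\lambda^*_{\min}(d)$ of Theorem \ref{theoboundsintro}.
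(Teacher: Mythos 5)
Your proof is correct and rests on exactly the same two ingredients as the paper's: the identity $Z_n^{\text{wired}}(\log p,\log(1-p))=1$ evaluated at $p^*=1/(2d-1)$, and the isoperimetric bound $\partial N\le (2d-2)N+Cn^{d-1}$, combined along the direction of slope $-(2d-2)$ in the $(\mu,\lambda)$-plane. The only (cosmetic) difference is that you apply the isoperimetric bound pointwise to each summand of the reweighted partition function, whereas the paper phrases it as the differential inequality $\partial_\mu\log Z_n^{\text{wired}}\le(2d-2)\,\partial_\lambda\log Z_n^{\text{wired}}+Cn^{d-1}$ and integrates it to get $\PP(\lambda-t(2d-2),\mu+t)\le\PP(\lambda,\mu)$ before optimizing over the starting point on the curve $\mu\mapsto\log(1-e^\mu)$.
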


\begin{proof}

As in the proof of Proposition \ref{superupperbound} we use the convexity of the function $\PP$. Before we need to control its increases.

\begin{lemm}
There exists a constanct $C>0$ such that for each $n\ge 1$ 
$$ \frac{\partial   \log(Z_{n}^{\text{wired}}(\lambda,\mu))}{\partial \mu} \le  (2d-2) \frac{\partial  \log( Z_{n}^{\text{wired}}(\lambda,\mu))}{\partial \lambda}+ Cn^{d-1}.$$

\end{lemm}

\begin{proof}

Let us recall formulas \eqref{increasespressure} on the derivatives of $\log( Z_{n}^{\text{wired}})$. A discrete isoperimetrical  inequality on $\Z^d$ ensures that any allowed configuration $\o$ in $\A$ satisfies $\partial N(\o)\le (2d-2)N+ 2d$. Taking accound the boundary effects, there exists a constant $C>0$ such that

$$\partial N_{\L_n}^{\text{wired}}\le (2d-2)N_{\L_n}+Cn^{d-1}.$$ 

The proof of the lemma follows.
\end{proof}

By finite variation increasing, we deduce that for any $(\lambda,\mu)\in\R$ and any $t\ge 0$

$$\frac{1}{\#\EE_n}\log\Bigg(Z_{n}^{\text{wired}}\bigg(\lambda-t\Big( 2d-2+C\frac{n^{d-1}}{\#\EE_n}\Big),\mu+t\bigg)\Bigg)\le  \frac{1}{\#\EE_n}\log(Z_{n}^{\text{wired}}(\lambda,\mu)) $$
and passing to the limit we obtain

$$ \PP \Big(\lambda-t(2d-2),\mu+t\Big)\le \PP \Big(\lambda,\mu\Big). $$

Since the function $\PP(.,.)$ is positive we deduce that $\PP(\lambda,\mu)=0$ implies $ \PP(\lambda-t(2d-2),\mu+t)=0$ for all $t\ge 0$.  By Theorem \ref{theoremexplicit}, for any $\mu\in\R$ we have $\PP(\log(1-e^\mu),\mu)=0$ and therefore $ \PP(\log(1-e^\mu)-t(2d-2),\mu+t)=0$ for all $t\ge 0$. Optimizing with respect to $\mu\in\R$ and $t\ge 0$ we obtain that for $\mu\le \log(1-1/(2d-1))$, $\lambda^*(\mu)\ge \log(1-e^\mu)$ and for $\mu\ge  \log(1-1/(2d-1))$

$$\lambda^*(\mu)\ge -\log(2d-1)+(2d-2)\Big(\log(1-1/(2d-1))-\mu\Big).$$

 The proposition is proved (see Figure \ref{figdiagram}).

\end{proof}

\begin{figure}
\begin{picture}(200,150)
\put(100,0){\includegraphics[scale=1.4]{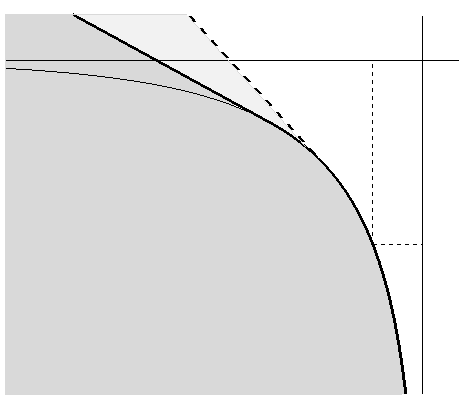}}
\put(150,80){$\G_s(\lambda,\mu)=\emptyset$}
\put(330,190){$(0,0)$}
\put(330,80){$\mu=\log(1-p)$}
\put(260,190){$\lambda=\log(p)$}
\put(185,188){\Large $?$}
\put(330,150){$\G_s(\lambda,\mu)\neq\emptyset$}
\end{picture}
\caption{\label{figdiagram} Phase diagram for $\G_s(\lambda,\mu)$ with respect to $(\lambda,\mu)$. The set  $\G_s(\lambda,\mu)$  is empty for $(\lambda,\mu)$ in the dark gray area and is not empty for $(\lambda,\mu)$ in the white area. The light gray area is the unknown part of the phase diagram. The critical curve $(\lambda^*(\mu),\mu), \mu\in\R)$ belongs to the light gray area and is on the boundary between the dark gray and white areas. The dotted line corresponds to the upper bound given in Proposition \ref{superupperbound} and the bold  line and curve correspond to the lower bound given in Proposition \ref{superlowerbound}. The thin black curve is parametrized by $(\log(p),\log(1-p)), p\in (0,1)$. All the curves are plotted in the case $d=2$.}
\end{figure}

\section{Proofs of main results}\label{Section_proofs}

In this section we give the proofs of results from Section \ref{Section_Results}. Several results are direct consequences of results obtained in the previous Section \ref{Section_2parModel} since $\LL(p),\LL_s(p),\G(p),\G_s(p)$ are nothing else than $\LL(\lambda,\mu),\LL_s(\lambda,\mu),\G(\lambda,\mu),\G_s(\lambda,\mu)$ for $\lambda=\log(p/(1-p))$ and $\mu=0$.

{\it  Theorem \ref{thegeointro} }. It is a direct corollary of Theorem \ref{TheoFCBMconnected}.

{\it Theorem \ref{theothtesholdintro}}. The threshold $p^*(d)$  is defined such that $\lambda^*(0)=\log(p^*(d)/(1-p^*(d)))$ leading to

\begin{equation}\label{transformation}
 p^*(d)=\frac{e^{\lambda^*(0)}}{1+e^{\lambda^*(0)}}
\end{equation} 
 
which belongs to $(0,1)$ since $\lambda^*(0)\in\R$ (Propositions \ref{superupperbound} and \ref{superlowerbound}). The second item in Theorem \ref{theothtesholdintro} is a direct consequence of the second item in Theorem \ref{TheoremThreshold}. For the first item it is a bit more delicate. Actually the first item in Theorem \ref{TheoremThreshold} ensures that for $p>p^*(d)$ there exists $P$ in $\in \G_s(p)$. At this stage it is not sure that $P$ is in $\LL_s(p)$. To this end we use a standard decomposition of $\G_s(p)$ in ergodic phases (see for instance \cite{georgii} for a general presentation). Indeed $\G_s(p)$ is a Choquet simplex where each extremal point is  the limit of finite volume Gibbs measures for deterministic boundary conditions. Precisely, since $\G_s(p)$ is not empty it contains at least one extremal point $P$ and therefore for $P$ almost every $\o$

$$\{P\}=\LL^{\text{bc}(\o)}(p)=\LL_s^{\text{bc}(\o)}(p).$$

 It is enough to ensure the existence of $P$ in $\in\LL_s(p)\cap \G_s(p)$ with $P(\A)=1$.

{\it Theorem \ref{theoboundsintro}}. The bounds \eqref{boundsintro} are direct consequences of identity \eqref{transformation} and Propositions \ref{superupperbound} and \ref{superlowerbound}.

\subsection{Proof of Theorem \ref{theouniciteintro}}
In this section we have to show that for $d=2$ and $p\ge 1/2$ the sets $\G(p)$ and $\G_s(p)$ are equal and reduced to a single element $\{P\}$ and moreover any $Q$ in $\LL(p)$ is a mixture between $P$ and  the vacuum state $\delta_{0^\EE}$. To this end we build an independent Bernoulli field which is dominated by any elements in $\G(p)$ or $\LL(p)$ (excepted the vacuum state). Based on disagreement arguments and a delicate coupling, we show that all $P,Q$ in $\G(p)$ or $\LL(p)$ (excepted the vacuum state) are identical provided that the dominated Bernoulli field percolates (or is at criticality). The starting point of our coupling construction is the following Lemma.

\begin{lemm} \label{probaminore}
Let  $p\ge 0$ and $P\in\G(p)\cup\LL(p)$. Let $E\subset \EE$ be a finite subset of edges and $\tilde \o\in\A$ an allowed configuration. Let $e$ be an edge in $\EE\backslash E$. We assume that there exists an open edge $f$ in $\tilde\o_E$ having a common vertex with $e$. Then

\begin{equation}\label{localcondi}
 P(e  \text{ is open } | \tilde\o_E) \ge p.
 \end{equation}

\end{lemm}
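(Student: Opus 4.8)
The plan is to reduce \eqref{localcondi} to a one-edge estimate and then integrate. Write $e=\{u,v\}$ and isolate the combinatorial fact that makes the connectivity constraint cooperative: if a configuration $\o$ with $e$ closed satisfies $\Ncc=1$, then reopening $e$ still yields $\Ncc=1$. Indeed, on the event we condition upon the edge $f$ is open and shares a vertex with $e$, so at least one endpoint of $e$ already belongs to the unique open cluster; opening $e$ can only attach the other endpoint to this cluster and can never spawn a new component. The same statement holds verbatim in finite volume for the constraint $\Ncc^{\bc}=1$, since adding an edge to a nonempty graph cannot increase its number of connected components. This monotonicity is the mechanism behind the bound.

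Granting this, I would first treat $P\in\G(p)$. Apply the DLR equation of Definition \ref{definitionDLR} with $\L=\{u,v\}$, for which $\EE_\L=\{e\}$, so that the specification $Q_\L^{\tilde\o}$ is precisely the conditional law of $\o_e$ given $\o_{\EE\backslash\{e\}}$. Since $\mu=0$ for the fully-connected model, the weights of the two states ``$e$ open'' and ``$e$ closed'' differ only by the factor $e^\lambda=p/(1-p)$ carried by $N$. On $\{\o_E=\tilde\o_E\}$ the edge $f$ is open and adjacent to $e$, so ``$e$ open'' is admissible by the fact above; hence either both states are admissible and
$$P(e\text{ is open}\mid \o_{\EE\backslash\{e\}})=\frac{e^\lambda}{1+e^\lambda}=p,$$
or only ``$e$ open'' is admissible and this conditional probability equals $1$. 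In both cases it is $\ge p$ on $\{\o_E=\tilde\o_E\}$. Since $e\notin E$, the $\sigma$-field $\sigma(\o_E)$ is coarser than $\sigma(\o_{\EE\backslash\{e\}})$, so the tower property gives \eqref{localcondi}.

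For $P\in\LL(p)$ I would run the identical computation at finite volume and pass to the limit. Fix $n$ with $E\cup\{e\}\subset\EE_n$. Reading off the conditional law of $\o_e$ directly from the density \eqref{defQn} (again with $\mu=0$) and using the monotonicity fact for $\Ncc^{\bc}$, one gets $Q_n^{\bc}(e\text{ is open}\mid \o_{\EE_n\backslash\{e\}})\ge p$ on $\{\o_E=\tilde\o_E\}$. As $\1_{\{\o_E=\tilde\o_E\}}$ is $\sigma(\o_{\EE_n\backslash\{e\}})$-measurable, integrating yields the local inequality
$$Q_n^{\bc}\big(\{e\text{ is open}\}\cap\{\o_E=\tilde\o_E\}\big)\ge p\,Q_n^{\bc}\big(\{\o_E=\tilde\o_E\}\big).$$
Both events depend only on the finitely many edges $E\cup\{e\}$, hence are continuous for the local topology, so letting $Q_n^{\bc}\to P$ along the defining subsequence preserves the inequality; dividing by $P(\o_E=\tilde\o_E)$ (positive, otherwise the conditional statement is vacuous) gives \eqref{localcondi}.

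The only genuinely delicate point is the monotonicity fact in the presence of a boundary condition: I would check carefully that activating the far endpoint of $e$, which may be a previously isolated vertex or may merge with a boundary set of $\bc$, cannot create a second component precisely because $e$ is wired through $f$ into the existing cluster. Once that is secured, the rest is the routine passage from a one-site specification bound to a local inequality that is stable under weak limits.
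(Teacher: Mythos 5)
Your proof is correct and follows essentially the same route as the paper's: a single-edge conditional estimate (ratio of weights $p/(1-p)$ between the open and closed states, combined with the observation that opening $e$ adjacent to the open edge $f$ cannot violate the connectivity constraint), established at finite volume and passed to the weak limit for $\LL(p)$, and obtained through the DLR equations plus integration for $\G(p)$. The only cosmetic difference is that you invoke DLR on the minimal box $\L=\{u,v\}$ while the paper conditions on the exterior of a large box $\L_n$ and reuses the finite-volume inequality there; you also spell out the combinatorial monotonicity step that the paper leaves implicit.
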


\begin{proof}

Denoting by $A$ the event of configurations $\o\in\O$ such that $\o_E=\tilde\o_E$, inequality \eqref{localcondi} is equivalent to claim that for  $P\in\G(p)\cup\LL(p)$ 

\begin{equation}\label{localcondi2}
P(A\cap\{e  \text{ is open }\})\ge p P(A). 
\end{equation}

Let $n$ large enough such all vertices of edges in $E$ or $e$ are included in $\L_n$. Then for any boundary condition $\bc(n)$

$$ \P_p^{\L_n}(A\cap\{e  \text{ is open }\}|\A^{\bc(n)})\ge p \P_p^{\L_n}(A|\A^{\bc(n)}).$$
Passing to the limit when $n\mapsto \infty$, inequality \eqref{localcondi2} holds for any $P$ in $\LL(p)$. Similarly for any allowed configuration $\hat \o\in\A$,

$$ \P_p^{\L_n}(A\cap\{e  \text{ is open }\}| \hat \o_{\EE_{\L_n}^c} )\ge p \P_p^{\L_n}(A| \hat \o_{\EE_{\L_n}^c}).$$

By DLR equations \eqref{DLRintro} and by integrating $\hat \o$ with respect to $P\in\G(p)$ we recover  inequality \eqref{localcondi2}.

\end{proof}

Let us give an interpretation of the lemma. For any process $P\in\G(p)\cup\LL(p)$, during an exploration procedure declaring the states of edges, if we want to explore an new edge closed to an open edge, then this new edge is open with probability at least $p$.

Let us now prove the theorem. Let $P$ and $Q$ be two elements in $\G(p)\cup\LL(p)$. In the following, for all bounded cubes $\L\subset \Delta \subset \Z^2$, we  build a coupling with three processes $X$, $Y$, $Z$ on $\O$ such that $X\sim P$, $Y\sim Q$ and $Z\sim \B(p)^{\otimes \EE}$ having the following coupling  property:

  [Coupling property:]  {\it if $Z$ has an open connected path in $\Delta$ surrounding $\L$ then $X$ and $Y$ are identical in $\L$ or $X$ (or $Y$) is null in $\L$. }

Before proving that this coupling exists, let us show how it allows to prove that for $p$ larger than the bond percolation threshold on $\Z^2$ (i.e. $p\ge \frac 12$) then $P(.|\{0^\EE\}^c)$ and $Q(.|\{0^\EE\}^c)$ have the same distribution. The proof of the theorem follows. Indeed, for a fixed bounded set $\L\subset \Z^2$, the probability, that $Z$ has an open connected path in $\Delta$ surrounding $\L$, is going to one when $\Delta$ is going to $\Z^2$ (note that this fact is true  also at the critical point $p=\frac 12$).  Then realizations of  $P$ and $Q$ are either equal in $\L$ or one of them (or both) charges the null configuration in $\L$. But given the event $\{0^\EE\}^c$ the probability under $P$ or $Q$ that a realization charges the null configuration in $\L$, is going to zero when $\L$ is going to $\Z^2$. So the probability that realizations under  $P(.|\{0^\EE\}^c)$ and  $Q(.|\{0^\EE\}^c)$ are locally equal is going to one  when $\L$ is going to $\Z^2$. It is enough to ensure that the probability measures $P(.|\{0^\EE\}^c)$ and  $Q(.|\{0^\EE\}^c)$ are the same. Recall that for $P$ in $\G(p)$, by definition $P(0^\EE)=0$, and so the set $\G(p)$ is reduced to a single element $\{P\}$. Now for any $Q$ in $\LL(p)$ we have that $Q(.|\{0^\EE\}^c)=P$ which is enough to ensure the expected mixture representation. The theorem is proved.

Let us now give the construction of the coupling with the good properties mentioned above. We fix a collection of independent random variates $(U_e)_{e\in\EE}$ with uniform distribution on $[0,1]$ and, as usual, the process $Z$ is simply defined by $(\1_{U_e<p})_{e\in\EE^\Delta}$. Now we build $X$ and $Y$ using the following revealment algorithm. First we realize arbitrary  $X$ and $Y$ outside the window $\Delta$ (i.e. for any edge having at least one vertex in $\Delta^c$) with distributions $P$ and $Q$. The boundary conditions in $\Delta$ are now observable.

By Proposition  \ref{Prop_notbounded} any connected component produced under $P$ or $Q$ is almost surely unbounded. So if the realization of $X$ (or $Y$) outside $\Delta$ is null we extend $X$ (or $Y$) inside $\Delta$ with the null configuration as well. If the realization of $X$ (or $Y$) outside $\Delta$ is not null but satisfies the "free" boundary condition (no open edge at the boundary) then the realization of $P$ (or $Q$) inside $\Delta$ is necessary the null configuration. So we set $X$ (or $Y$) equals to the null configuration in $\Delta$ as well. In any case the coupling property occurs. 

Let us now build the coupling in the case where the boundary conditions for $X$ and/or $Y$ are not "free". We sample $X$ (or $Y$) inside $\Delta$ with the following revealment algorithm. We reveal the open edges one by one exploring the neighbourhood of open edges revealed before by the algorithm. We start from the boundary of $\Delta$ where open edges are present. When we explore a new edge $e$ (hitting necessary an open edge explored before), by lemma \ref{probaminore}, the probability that this edge $e$ is open is larger than $p$. So we declare that this edge is open in $X$ if the uniform variate $U_e$ is smaller than the expected probability. We note directly that this edge $e$ is open in $Z$ if it is open in $X$. That is the way to couple $X$ with $Z$.  The order that the Algorithm uses to choose the edges during the exploration is crucial and explained now. First we choose an a priori order on edges in $\EE_\Delta$ to organize the exploration. At each step of the algorithm we choose a new edge in the list of edges hitting an open edge explored before. We choose it following the a priori order but we force some extra rules. The main aim is the following.

{\it Main property of the Algorithm:} If the algorithm reveals a cluster of $Z$ in $\Delta$, then it is explored only on its exterior boundary. The interior is not revealed.

Let us recall what is the exterior boundary of a finite cluster of edges $C$. The topological cluster (i.e. the union of edge-segments) splits the space $\mathbb R^2$ in different connected components where only one is unbounded. The exterior boundary of $C$ is the set of the edges at the boundary of this unbounded connected component.   

Let us describe now the extra rules:

\begin{itemize} 
\item {\it Rule 1 (Clock-wise direction rule):} The algorithm always chooses the edges in the clockwise direction with respect to  the edges discovered before. It means that a new explored edge does not have a non-explored edge at the anticlockwise position. This rule prevents that the new explored edge is inside a cluster of $Z$. If a cluster of $Z$ is discovered, necessary the edge belongs to its exterior boundary.

\item {\it Rule 2 (Boundary cluster of $Z$ priority rule):} If a cluster of $Z$ is discovered, then the algorithm explores in priority the full exterior boundary of this cluster. Precisely, after discovering  the first edge of the cluster, the algorithm explores the edges around the new vertex at the boundary (in the clockwise direction as mentioned in the Rule 1). When a new edge at the boundary is discovered, it starts again a new exploration around the new vertex and so on. Actually the Algorithm explores the exterior boundary of the cluster in the clockwise direction without revealing any edges inside the cluster. Let us note that the algorithm can visit twice an exterior boundary edge if the cluster is very thinned at this edge.

\item {\it Rule 3 (No exploration inside cluster of $Z$ rule):} The Algorithm never explores inside clusters of $Z$. Thanks to Rules 1 and 2, the exterior of the boundary of a cluster of $Z$ is explored first during the run of the algorithm. We add the constraint that the algorithm will never explore latter the rest of the cluster. 

\end{itemize}

The Algorithm stops when all edges allowed to be explored have been explored. It is not difficult to see that the process $X$ (or $Y$) is build completely excepted inside the clusters of $Z$ met during the exploration. We sample arbitrary inside these clusters following the DLR description. This procedure does not depend on the distribution $P$ or $Q$. So if $X$ and $Y$ discover the same cluster of $Z$, we use the same sampling inside the cluster. That is the way to couple $X$ and $Y$. It is now not difficult to see that the "coupling property" holds with this construction. Indeed, if $Z$ has an open connected path in $\Delta$ surrounding $\L$ then this path is included in an open cluster with exterior boundary surrounding $\L$ as well. So if the explorations of $X$ and $Y$ meet this cluster of $Z$ then $X$ and $Y$ are identical by construction. If the exploration of $X$ (or $Y$) does not meet this cluster then $X$ (or $Y$) is null on $\L$.

\section*{Acknowledgement}

The author would like to thank Vincent Beffara for the fruitful discussions on the topic.  This work was supported in part by the Labex CEMPI (ANR-11-LABX-0007-01), the ANR project PPPP (ANR-16-CE40-0016) and by the CNRS GdR 3477 GeoSto.

\bibliography{bibFCPM}

\bibliographystyle{plain}

\end{document}